\newcommand{\relgap}{{\rm rel\_gap}}
\def\allow{\mathop{\rm Allow}\nolimits}
\def\qallow{\mathop{\rm q\!-\!Allow}\nolimits}
\def\Int{\mathop{\rm Int}\nolimits}
\newcommand{\actaqed}{\hfill $\Box$}
\newenvironment{proof}{Proof: }{$\Box$}
\noindent \textit{Sketch of Proof for #1.}}%
\noindent \textit{Proof of #1.}}%
\newcommand{\Rr}{\mathbb{R}}
\newcommand{\C}{\mathbb{C}}
\newcommand{\G}{\mathcal{G}}
\def\S{\mathcal{S}}
\def\k{{\cal K}}
\def\ii{{\cal I}}
\newcommand{\eqbd}{\mathop{:}{=}}
\newcommand{\bmat}{\left[ \begin{array}}
\newcommand{\emat}{\end{array} \right]}
\newtheorem{theorem}{Theorem}[section]
\newtheorem{lemma}[theorem]{Lemma}
\newtheorem{corollary}[theorem]{Corollary}
\newtheorem{definition}[theorem]{Definition}
\newtheorem{example}[theorem]{Example}
\newcommand{\ignore}[1]{}
\newcommand{\hugeX}{\phantom{\huge X}\!\!\!\!\!\!\!\!}
\newcommand{\diag}{{\rm diag}}
\begin{document}

\title{Accurate and Efficient Expression Evaluation and Linear Algebra}
\author{James Demmel\thanks{Department of Mathematics and Computer Science Division, 
University of California-Berkeley.  J. Demmel acknowledges support of  NSF under grants CCF-0444486,
 CNS 0325873, by DOE grant DE-FC02-06ER25786, and of the University of California-Berkeley  
Richard Carl Dehmel Distinguished Professorship.}, Ioana Dumitriu\thanks{Department of Mathematics,
University of Washington.  I. Dumitriu acknowledges support of the Miller Institute
for Basic Research in Science.}, Olga Holtz\thanks{Departments of Mathematics, University of California-Berkeley
and Technische Universit\"at Berlin. O. Holtz acknowledges support of the Sofja
Kovalevskaja programm of Alexander von Humboldt Foundation.}  and 
Plamen Koev\thanks{Department of Mathematics, North Carolina State University. 
P. Koev acknowledges support of NSF under grants DMS-0314286, DMS-0411962, 
DMS-0608306.}}
%\date{\today}

\maketitle
\label{firstpage}

\begin{abstract}
We survey and unify recent results on the existence of accurate algorithms
for evaluating multivariate polynomials, and more generally for accurate numerical
linear algebra with structured matrices. By "accurate" we mean that the computed
answer has relative error less than $1$, i.e., has some correct leading digits.
We also address efficiency, by which we mean algorithms that run in polynomial
time in the size of the input. Our results will depend strongly on the model
of arithmetic: Most of our results will use the so-called {\em Traditional Model (TM),\/}
where the computed result of $op(a,b)$, a binary operation like $a+b$, is given by
$op(a,b)*(1 + \delta)$ where all we know is that $| \delta | \leq \varepsilon \ll 1$. 
Here $\varepsilon$ is a constant also known as machine epsilon.

We will see a common reason that the following disparate problems all permit
accurate and efficient algorithms using only the four basic arithmetic operations:
finding the eigenvalues of a suitably discretized scalar elliptic PDE,
finding eigenvalues of arbitrary products, inverses, or Schur complements
of totally nonnegative matrices (such as Cauchy and Vandermonde), and evaluating
the Motzkin polynomial. Furthermore, in all these cases the high accuracy is "deserved", 
i.e., the answer is determined much more accurately by the data than the 
conventional condition number would suggest. 

In contrast, we will see that evaluating even the simple polynomial
$x+y+z$ accurately is impossible in the TM, using only the 
basic arithmetic operations. We give a set of necessary and sufficient conditions to decide
whether a high accuracy algorithm exists in the TM, and describe progress
toward a decision procedure that will take any problem and provide either  a high
accuracy algorithm or a proof that none exists.

When no accurate algorithm exists in the TM, it is natural to extend the set of 
available accurate operations by a library of additional operations, such
as $x+y+z$, dot products, or indeed any enumerable set which could then be
used to build further accurate algorithms. We show how our accurate algorithms 
and decision procedure for finding them extend to this case.

Finally, we address other models of arithmetic, and the relationship
between (im)possibility in the TM and (in)efficient algorithms operating 
on numbers represented as bit strings.
\end{abstract}

\newpage

\tableofcontents

\section{Introduction}  \label{sec_intro}

A result of a computation will be called {\em accurate\/} 
if it has a small relative error, in particular less than $1$ 
(i.e., some leading digits must be correct).
Now we can ask what the following problems have in common:
\begin{enumerate}
\item Accurately evaluate the Motzkin polynomial 
$$p(x,y,z) = z^3 + x^2 y^2 ( x^2 + y^2 - 3z^2)~.$$
\item Accurately compute the entries or eigenvalues of 
a matrix gotten by performing an
arbitrary sequence of operations chosen from the set
\{multiplication, $J$-inversion, Schur complement, taking submatrices\},
starting from a set of Totally Nonnegative (TN) matrices such as
the Hilbert matrix, TN generalized Vandermonde matrices, {\em etc.\/}
\item Accurately find the eigenvalues 
of a suitably discretized scalar elliptic PDE.
% of the scalar elliptic partial
% differential equation $\nabla \cdot (\theta \nabla u) + \lambda \rho u = 0$
% on a domain $\Omega$ with zero Dirichlet boundary conditions, 
% where $\theta(x)$ and $\rho(x)$ are scalar functions,
% and we have discretized it on a triangulated mesh in a standard way.
% (isoperimetric finite elements on a triangulated mesh)
\end{enumerate}
We also ask how they all differ from the apparently much
easier problem of evaluating $x+y+z$.

The answer will depend strongly on our model of arithmetic.
For most of this paper we will use the 
{\em Traditional Model (TM)\/} of arithmetic, that
the computed result of $op(a,b)$, a binary operation
like $a+b$, is given by $op(a,b)\cdot (1 + \delta)$
where all we know is that $|\delta| \leq \varepsilon \ll 1$.
Here $\varepsilon$ is a real constant also known as {\em machine precision.\/}
We will refer to $rnd(op(a,b)) \equiv op(a,b)(1 + \delta)$ as the 
{\em rounded result\/} of $op(a,b)$.
We will distinguish between the cases where the other quantities (including $\delta$s) 
are all real, or all complex.

To see why some expressions may or may not be evaluable accurately in the TM,
consider multiplying or dividing two numbers each known to relative error $\eta < 1$: 
then their rounded product or quotient is clearly correct with relative
error $O(\max(\eta,\varepsilon))$. 
This also holds when adding two like-signed real numbers 
(or subtracting real numbers with opposite signs).
In contrast, subtracting two like-signed
real numbers $x-y$ can lead to {\em cancellation\/} of leading digits:
If $x$ and $y$ themselves have nonzero relative error bounds,
then depending on the extent of cancellation, $x-y$ may have
an arbitrary relative error.
On the other hand if $x$ and $y$ are exact inputs, then
$rnd(x \pm y)=(x \pm y)(1 + \delta)$ is also known with small relative error.
In other words, an easy sufficient (but not necessary!) 
condition in the TM for an algorithm to be accurate is 
``No Inaccurate Cancellation'':
\begin{description}
\item[NIC:] The algorithm only (1) multiplies, (2) divides, (3) adds (resp., subtracts) 
real numbers with like (resp., differing) signs, 
and otherwise only (4) adds or subtracts input data. 
\end{description}
Sometimes we will also include the square root among our
allowed operations in 
NIC\footnote{However, square roots require more care in bounding
the relative error: In floating point arithmetic on most computers, 
computing $y = x^{1/2^{100}}$
by $100$ square roots and then $z = y^{2^{100}}$ by $100$ squarings,
yields $z=1$ independently of $x > 0$.}.

In the TM, with real numbers, the three problem listed above all have
novel accurate algorithms that use only four basic arithmetic operations 
($+$, $-$, $\times$ and $/$), comparison and branching, and satisfy NIC.
Furthermore, the matrix algorithms are efficient, running in $O(n^3)$ time 
(we say more about efficiency below).
These linear algebra algorithms depend on some recently discovered matrix 
factorizations and update formulas, and the algorithm for the Motzkin 
polynomial (surprisingly) fills a page with $8$ cases. 
In contrast, with complex arithmetic, no accurate algorithms exist. 
Nor is there an accurate algorithm using only these operations, in the real
or complex case, for evaluating $x+y+z$ accurately.

For example, consider Figure~\ref{figur1}, which shows the eigenvalues 
of a matrix gotten by taking the trailing $20$-by-$20$ Schur complement of
a $40$-by-$40$ Vandermonde matrix. Both the eigenvalues computed
by our algorithm (in standard double precision floating point
arithmetic), and by a conventional algorithm are 
shown. Note that {\em every\/} eigenvalue computed by the
conventional algorithm is wrong by orders of magnitude,
whereas all ours are correct to nearly $14$ digits, as confirmed 
by a very high precision calculation. 

\begin{figure}
\begin{center}
{\epsfxsize=8.75cm \epsfbox{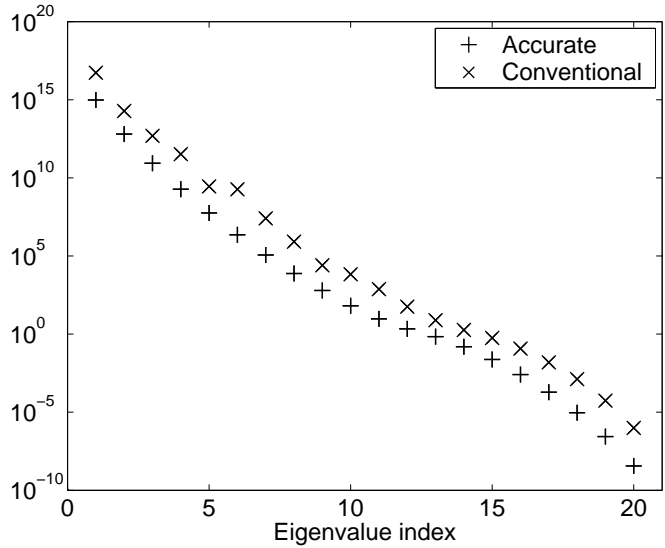}}  
\caption{Eigenvalues of the $20$th Schur Complement of the 
$40$-by-$40$ Vandermonde matrix $V_{ij} = i^{j-1}$, computed both 
using a Conventional algorithm (x) and and Accurate algorithm (+).}
\label{figur1}
\end{center} 
\end{figure}

Section~\ref{sec_Plamen} of this paper will survey a great many other examples of structured 
matrices where accurate and efficient linear algebra algorithms are possible 
using NIC as the main (but not only) tool; see Table \ref{table1} for a summary. 

One may wonder whether this accuracy is an ``overkill'', because small uncertainties 
in the data might cause much larger uncertainties in the computed results.
In this case computing results to high accuracy would be more than the data 
deserves, and not worth any additional cost.
Indeed the usual condition numbers of the problems
considered here are usually enormous. However, their
{\em structured\/} condition numbers are often quite
modest, justifying computing the answers to high accuracy. 
For example, while a Cauchy matrix 
$C_{ij} = 1/(x_i + y_j)$ such as the Hilbert
matrix ($x_i = i = 1 + y_i$) is considered
badly conditioned since $\kappa(C) \equiv \|C\| \cdot \|C^{-1}\|$
can be very large, the entries of $C^{-1}$ are actually much less 
sensitive functions of $x_i$ and $y_j$ than $\kappa (C)$ would indicate.
Indeed, if the answer is given by a formula satisfying NIC,
then the condition number can only be large when cancellation
occurs when computing $x \pm y$ for uncertain input data $x$
and $y$; each such expression adds the quantity
$1/\relgap(x,y) \equiv (|x| + |y|)/|x \pm y|$ 
to the structured condition number.  This is true of all the examples in 
Section~\ref{sec_Plamen},  justifying their more accurate computation than
would the usual condition number.

The profusion and diversity  of these examples naturally raises 
the question as to what mathematical property they share that
makes these algorithms possible. Section~\ref{sec_OlgaIoana} of this paper
addresses this, by describing progress towards a {\em decision procedure\/}
for the more basic problem of deciding whether a given multivariate polynomial 
can be evaluated  accurately using the basic rounded arithmetic operations,
comparison, and branching. 
The answer will depend not just
on the polynomial, but whether the data is real or complex,
and on the domain of evaluation (a smaller domain may be 
easier than a larger one, if it eliminates difficult arguments).
This decision procedure would yield
simpler necessary and sufficient conditions (not identical in
all cases) that tell us whether the algorithms in Section~\ref{sec_Plamen} 
(or others not yet discovered) must exist (we will use the
fact that accurate determinants are necessary and often sufficient
for accurate linear algebra). It will turn
out that the results for real arithmetic are much more
complicated than for complex arithmetic, where simple
necessary and sufficient condition may be stated 
(the answer is basically given by NIC above);
this reflects the difference between algebraic geometry over
the real and complex numbers.

% One negative result of Part 2 will be the impossibility of 
% accurate and efficient linear algebra algorithms for Toeplitz
% matrices.  

One negative result of Section~\ref{sec_traditional} will be the impossibility
of evaluating $x+y+z$ using only the basic rounded arithmetic operations.
This seems odd, since $x+y+z$ is so simple. But it is only simple
if we use the fact that in practice (floating point arithmetic), 
$x$, $y$ and $z$ are represented by finite bit strings that can
be manipulated and analyzed differently than by assuming only that 
$rnd(op(a,b)) = op(a,b)(1 + \delta)$ with $|\delta| \leq \varepsilon$.
To go further we must extend our model of arithmetic.
We do so in two ways.

Section~\ref{sec_extended} continues by adding so-called ``black-box'' operations
to the basic arithmetic operations. For example, one could assume
that a subroutine for the accurate evaluation of $x+y+z$
(or of dot products, or of 3-by-3 determinants, {\em etc.\/})
also existed, and then ask the analogous question as to what
other polynomials could be accurately evaluated, using this
subroutine as a building block.  This indeed
models computational practice, where subroutine libraries
of such black-box routines are provided in order to build
accurate algorithms for other more complicated polynomials. 
In Section~\ref{sec_extended} we also describe how to extend our decision procedures 
when an arbitrary set of such black-box routines is available,
and the question is whether another polynomial not
already in the set can be evaluated accurately.
A positive result will be showing that just the ability to
compute $2$-by-$2$ determinants accurately is enough to permit accurate
and efficient linear algebra on the inverses of tridiagonal matrices. 
A negative result will be the impossibility
of accurate linear algebra with Toeplitz
matrices, given {\em any\/} set of block-box operations of
bounded degree or with a bounded number of arguments.

Sections~\ref{sec_traditional} and \ref{sec_extended} go some way to describing the possibilities
and limits of solving numerical problems accurately in
practice. But ``in practice'' means using finite representations
with bits, i.e., floating point, in which case accurate (even exact) polynomial
evaluation is always possible, and the only question is cost.
In Section~\ref{sec_OtherModels}, after a brief discussion of other arithmetic 
models, we will settle on one model we believe best captures the 
spirit of actual floating point computation, but without limiting 
it to fixed word sizes: an arbitrary pair of integers
$(m,e)$ is used to represent the floating point number $m \cdot 2^e$.
In this model, we describe how the algorithms in Section~\ref{sec_Plamen}
lead to efficient algorithms that run in time polynomial in
the size of the inputs, the usual computer science
notion of efficiency. In contrast, conventional algorithms simply
run in high enough precision to get an accurate answer do not run
in polynomial time.

Finally, in Section~\ref{sec_Conditioning} we consider the structured condition
numbers for the problems we consider, which can be much
smaller that the usual unstructured condition numbers and
so justify accuracy computation.
In prior work \cite{demmel87}, the first author observed 
that for many problems the condition number 
of the condition number was approximately equal to the
condition number of the original problem, 
and that this corresponded to the
geometric property that the condition number 
was the reciprocal of the distance to
the nearest ill-posed (or singular) problem. 
These observations apply here, with the
following interesting consequence: 
for the examples considered here it is
possible to compute the solution to a 
problem accurately if and only if it is possible
to estimate its condition number accurately.
An analogous phenomenon was already observed  
in \cite{demmeldiamentmalajovich}.

\section{Accurate and efficient algorithms for linear algebra}
\label{sec_Plamen}

\ignore{

\subsection{Outline (based on SIAM Review outline from Aug 25,
2005(!); statements of main theorems, at
most sketches of proofs, a few graphs of numerical examples)}

\begin{itemize}
\item  Goals (columns of table: inv(A), LDU, SVD, etc)
\item Basic tools
\begin{itemize}
\item  No (or bounded) cancellation of inexact quantities
\item  Bidiagonal SVD
\item  One-sided Jacobi for SVD
\item   RRD, its SVD
\item   Graded matrices (anticipate DSTU)
\item   SVD for symmetric eigenproblem (def and indef?)
\item   Necessity, sufficiency of determinants
\end{itemize}
\item   Filling in rows of table
\begin{itemize}
\item   If only sparsity pattern known (acyclic)
\item   If sparsity and sign pattern known (TSC)
\item   Diagonally dominant (M-matrix or not)
\item   Unit displacement rank (Cauchy, (poly) Vandermonde)
\item   Differential operators (DSTU \& applications)
\item   Totally positive
\end{itemize}
\end{itemize}

}

\subsection{Introduction} \label{sec_P_intro}

The numerical linear algebra problems we
will consider include computing the product of matrices,
the Schur complement, the determinant or other minor,
the inverse, the solution to a linear system or least
squares problem, and various matrix decompositions
such as LDU (with or without pivoting)
QR, SVD (singular value decomposition), and
EVD (eigenvalue decomposition).

Conventional algorithms for these problems are at best only
{\em backward stable:\/} When applied to a matrix $A$
they compute the exact solution of a nearby problem
$A + \delta A$, where $\| \delta A \| = \mathcal O( \varepsilon ) \|A\|$,
where $\| \cdot \|$ is some matrix norm
and $\varepsilon$ is machine epsilon.
In consequence, the error in the computed solution
depends on how sensitive the answer is to small
changes in $A$, and is typically bounded in
norm by $\frac{\| \delta A \|}{\| A \|} \kappa (A)
= \mathcal O( \varepsilon ) \kappa (A)$, where
$\kappa (A)$ is a condition number (a scaled
norm of the Jacobian of the solution map).
Thus we have
two ways to lose high relative accuracy:
First, bounding the error only in norm may provide
very weak bounds for tiny solution components;
for example the error bound for the computed
singular values guarantees an absolute error
$| \sigma_{i,\mbox{true}} - \sigma_{i,\mbox{comp}} | = \mathcal O( 
\varepsilon ) \max_i 
\sigma_{i,\mbox{true}}$,
so that the large singular values have small relative
errors, but not the small ones.
Second, when $\kappa (A)$ is large, even large solution components
may be inaccurate, as when inverting an ill-conditioned matrix.

However, these conventional algorithms ignore the
{\em structure\/} of the matrix, which is critical to
our approach. Rather than treating, say, a Cauchy
matrix $C$ as a collection of $n^2$ independent
entries $C_{ij} = 1/(x_i + y_j)$, we treat it as
a function of its $2n$ parameters $x_i$ and $y_j$.
Starting from these $2n$ parameters, we can find
accurate expressions (because they satisfy NIC)
for $C$'s determinant
$\det (C) = \prod_{i<j} (x_i - x_j)(y_i - y_j)/ \prod_{i,j} (x_i + y_j)$
and other linear algebra problems.
As mentioned in Section~\ref{sec_intro}, expressions satisfying NIC
also imply that their structured condition numbers can be arbitrarily
smaller than their conventional condition numbers.

Now we outline our general approach to these problems.
First we consider the problems whose solutions
are rational functions of the parameters, such as
computing a determinant or minor.
Indeed, all these solutions can be
expressed using minors or quotients of minors. For example,
the entries of the inverse or LDU factorization are
(quotients of) minors,
the product $AB$ can be extracted from
$$
\bmat{ccc} I & A & 0 \\ 0 & I & B \\ 0 & 0 & 1 \emat^{-1},
$$
and the last column of
$$\bmat{ccc} I & A & -b \\ A^T & 0 & 0 \\ 0 & 0 & 1 \emat^{-1}
$$
contains the solution of the overdetermined least squares
problem $\min_x \| Ax-b \|_2$.
Thus the ability to compute certain minors
with high relative accuracy is sufficient to solve these
linear algebra problems with high relative accuracy.
Conversely, knowing a factorization like LDU
with high relative accuracy yields the determinant
with similar accuracy (via the product $\pm \prod_i D_{ii}$).
Thus we see that matrix structures that permit accurate
computations of certain determinants are both necessary
and sufficient
for solution of these linear algebra problems
with high relative accuracy.
In this section we will identify
a number of matrix structures that permit such accurate
determinants to be calculated.

Second, we consider the EVD and SVD,
which involve more general algebraic functions of the matrix entries.
To compute these accurately, we need other tools, which we
will summarize below in Section~\ref{sec_Tools}. Briefly, our approach will be to
compute one of several other matrix decompositions using only
rational operations (and possibly square roots), and
then apply iterative schemes to these decompositions that
have accuracy guarantees.

\ignore{
{\em Should we keep this paragraph or not?
I see Plamen has QR as a column in Table 1.}
The factorization $A=QR$ also requires square roots.
However the identity $A^TA = LDL^T = (LD^{1/2})(LD^{1/2})^T = R^TR$
implies that $R = D^{1/2} L^T$ and $Q = AR^{-1} = A L^{-T} D^{-1/2}$.
Thus we see that we only need to compute square roots of
(quotients of) minors (to get $D^{1/2}$) and
then multiply or divide rational functions
($L^T$ and $AL^{-T}$) by these square roots
to get the desired results ($R$ and $Q$).
}

Efficient conventional algorithms
(i.e., using $\mathcal 
O(n^3)$ arithmetic operations)
exist for each of the above problems and are
available in free packages
(e.g., LAPACK \cite{lapackmanual3}) or embedded in commercial
ones (e.g., MATLAB \cite{matlab}). So an extra challenge is to find
not just accurate algorithms, but ones that also take $\mathcal O(n^3)$
operations.

Our results, using only NIC, are summarized in Table~\ref{table1},
which describes (in a $\mathcal O( \cdot )$ sense) the speed of the 
fastest
known accurate algorithm for each problem shown.
There is one column for each linear algebra problem considered,
and one row for each structured matrix class. The abbreviations
not yet defined will be explained as we continue.

The rest of this section is organized as follows. Subsection 
\ref{sec_Tools} briefly presents accurate algorithms for the EVD and SVD. 
Subsection \ref{sec_design} walks through Table \ref{table1} row by row, 
again briefly explaining the results. Finally, Subsection 
\ref{sec_BeyondNIC} explains how much more is possible if we expand the 
class of formulas we may use beyond NIC in a certain disciplined way. This 
naturally raises the question of whether or not there is a systematic 
method to recognize such formulas, which is the final topic of 
this paper.

\subsection{Tools for computing EVD and SVD accurately}
\label{sec_Tools}

\subsubsection{Rank revealing decompositions and SVD}
\label{sec_RRD}
The first accurate SVD algorithm depends on
a {\em Rank Revealing Decomposition (RRD)\/} \cite{DGESVD} of matrix $A$,
a factorization $A = XDY$ where $D$ is nonsingular and diagonal,
and $X$ and $Y^T$ have full column rank and are ``well-conditioned''.
Note that $A$ may be rectangular or singular. The most obvious
example of an RRD is the SVD, where $X$ and $Y$ are as well-conditioned
as possible. Other examples where $X$ and $Y$ are (nearly always)
well-conditioned come from Gaussian elimination with complete pivoting
$A = LDU$, or from QR with complete pivoting $A = QDR$
(more sophisticated pivoting techniques with better condition
bounds on the unit triangular factors are available 
\cite{chan87,chandipsen,gueisenstat2,hongpan,hwly:92,ming,stewart93}).
An RRD $A=XDY$ has two attractive properties
\begin{enumerate}
\item
Given the RRD, it is possible to compute the SVD
to high relative accuracy in the following sense
\cite[Section 3]{DGESVD}, \cite[Algorithm 2]{demmelkoev99}:
\begin{itemize}
\item
The relative error in each singular value $\sigma_i$ is
bounded by $\mathcal O( \varepsilon \max ( \kappa (X) , \kappa (Y) ))$,
where $\kappa (X) = \| X \| \cdot \| X \|^{-1}$.
\item
The relative error in the $i$th computed (left or right)
singular vector is bounded by
$\mathcal O( \varepsilon \max( \kappa (X), \kappa (Y) ) / \min_{j \neq i} 
\relgap ( 
\sigma_i, \sigma_j )$.
In other words, the condition number can only be large if the singular value
agrees with another one to many leading digits, no matter how small they are
in absolute value.
\end{itemize}
\item
These error bounds do not change if the RRD
is known only approximately
(either because of uncertainty in $A$ or
roundoff in computing the RRD), as long as \cite[Theorem 2.1]{DGESVD},
\cite{eisenstatipsen,RCLi}:
\begin{itemize}
\item
We can compute $\hat{X}$ where $\| X - \hat{X} \| = \mathcal O( 
\varepsilon ) \| X \|$.
\item
We can compute a diagonal $\hat{D}$ where $|D_{ii} - \hat{D}_{ii}| = 
\mathcal O( \varepsilon ) |D_{ii}|$.
\item
We can compute $\hat{Y}$ where $\| Y - \hat{Y} \| = \mathcal O( 
\varepsilon ) \| Y \|$.
\end{itemize}
In other words, we only need the factors $X$ and $Y$ with high
absolute accuracy, not relative accuracy, a fact that will significantly
expand the scope of applicability.
\end{enumerate}

Among the various algorithms cited above for computing the
SVD, we sketch one \cite[Algorithm 3.2]{DGESVD},
along with an explanation of its accuracy:
\begin{tabbing}
junk \= junk \= junk \= junk \kill
\> 1) Compute the SVD of $XD$ using one-sided Jacobi,
yielding $XD = \bar{U} \bar{\Sigma} \bar{V}^T$.
Thus $A = \bar{U} \bar{\Sigma} \bar{V}^T Y $. \\
\> 2) Multiply $W = \bar{\Sigma} (  \bar{V}^T Y )$, respecting parentheses.
Thus $A = \bar{U}W$. \\
\> 3) Compute the SVD of $W$ using one-sided Jacobi, yielding
$W = \bar{\bar{U}} \Sigma V^T$. Thus $A = \bar{U} 
\bar{\bar{U}} \Sigma V^T$. \\
\> 4) Multiply $U = \bar{U} \bar{\bar{U}}$, yielding
 the SVD $A = U \Sigma V^T$.
\end{tabbing}

Briefly, the reason this works is that in steps 1) and 3), which
potentially combine numbers over very wide ranges of magnitude,
one-sided Jacobi respects this scaling by, in step 1) for example,
creating backward errors in column $i$ of $XD$ that are proportional
to $D_{ii}$ \cite{demmelveselic,drmac1995a,mathias96}.
Furthermore, each step costs $\mathcal O(n^3)$ arithmetic operations.

\subsubsection{Bidiagonal SVD}
\label{sec_BidiagonalSVD}
The second accurate SVD algorithm depends on a {\em Bidiagonal Reduction (BR)\/}
of matrix $A$,
a factorization $A = UBV^T$ where $B$ is bidiagonal
(nonzero on the main and first super-diagonal)
and $U$ and $V$ are unitary. This is an intermediate
factorization in the standard SVD algorithm.
If the entries of $B$ are determined
to high relative accuracy, so is $B$'s SVD in the
same sense as the RRD determines the SVD as described
above (but without any factor like $\max ( \kappa (X) , \kappa (Y) )$
in the error bounds).
Furthermore, accurate $\mathcal O(n^3)$ algorithms are available
\cite{demmelkahan,parlett95a}.

\subsubsection{Accurate EVD}
\label{sec_EVD}
Now we discuss the EVD. Clearly, if $A$ is symmetric positive definite, 
and a symmetric RRD $A = XDX^T$ is available, then the SVD and EVD are 
identical. If $A$ is symmetric indefinite but an accurate SVD is 
attainable, then the only remaining task is assigning correct signs 
to the singular values, which may be done using the algorithms of Dopico, 
Molera, and Moro \cite{dopicomoleramoro03}. Algorithms for computing 
symmetric RRDs of certain symmetric structured matrices are presented in 
\cite{dopicokoev,PelaezMoro06}.

We also know of two accurate {\em nonsymmetric\/} eigenvalue algorithms,
for totally nonnegative (TN) and for certain sign regular matrices, 
which we call TN$^J$ (see Section \ref{sec_TN} for definitions).

In the TN case, the trick is to implicitly perform an accurate 
similarity transformation to a {\em symmetric\/} tridiagonal positive 
definite matrix which is available to us in factored form. The TN 
eigenvalue problem is thus reduced to the bidiagonal SVD problem.

The sign-regular TN$^J$ matrices are similar to symmetric anti-bidiagonal 
matrices \cite{antibidiag} (i.e., the only nonzero entries are on the antidiagonal and one 
sub-antidiagonal). This similarity can be performed accurately by 
transforming implicitly an appropriate bidiagonal decomposition of the 
TN$^J$ matrix. Finally, the eigenvalues of the anti-bidiagonal matrix are 
its singular values with appropriate signs known from theory.

\ignore{
--------------------------------------------------------------

Beyond possible symmetry, positive definiteness, and/or sparsity, the
conventional algorithms assume the input matrices are unstructured and
guarantee only high {\em absolute\/} accuracy in the
output.\footnote{Conventional SVD algorithms can compute accurate SVDs
of carefully selected matrices---see section \ref{sec_basic_tools}. One of
the
approaches in accurate matrix computation has been to accurately reduce
more complex
problems to those accurate ones.}
This implies, for example, that
the singular values of a matrix and their computed
counterparts $\hat \sigma_i$ will satisfy a bound of the form
$$
|\sigma_i-\hat\sigma_i|\le \mathcal O(\varepsilon) \sigma_{\max},
$$
where $\varepsilon$ is the machine precision.
In particular, tiny singular values
(ones smaller than $\mathcal O(\varepsilon)\sigma_{\max}$) may be
lost to roundoff. In practice, they most often are.

As disappointing as this may be, the conventional algorithms deliver the
results ``as accurately as the data deserve''---small $\delta$
unstructured relative perturbations can cause at most $\mathcal
O(\delta)\sigma_{\max}$ (absolute) perturbation in any singular
value. The errors suffered by the conventional algorithm are of the
same magnitude as those caused by uncertainty of the data or by simply
storing the matrix in the computer.

The idea that an accurate computation should be ``deserved'' is central to
the development of accurate algorithms. In this paper we investigate the
common ideas and matrix structures what allow for various matrix
computations to be performed accurately.

We define a matrix to be {\em structured\/} if its entries are rational
functions of a set of {\em independent\/} parameters
$X=\{x_1,x_2,\ldots,x_k\},$ where $x_i, i=1,2,\ldots,n,$ are real or
complex. The only restriction that we allow to be imposed on the $x_i$'s
(if any) is positivity or negativity and it can only be imposed if
they are real.

For example, a Cauchy matrix $C=\big[\frac{1}{y_i+z_j}\big]_{i,j=1}^n$ is
parameterized by the set $\{y_1,y_2,\ldots,y_n,$ $z_1,z_2,\ldots,z_n\}$. A
diagonally dominant M-matrix $A$ can be parameterized by its nonpositive
{\em off-diagonal entries\/} $a_{ij}<0, i\ne j$, and nonnegative {\em row
sums\/} $s_1, s_2,\ldots,s_n$ \cite{mmatsvd}. The diagonal elements $a_{ii}$
are not parameters; rather they are {\em rational functions\/} of the input
parameters: $a_{ii}=s_i-\sum_{j=1}^na_{ij}$ (see also Example
\ref{example-M} below).

Our goal is to compute certain output (say) $\mathcal
R=\{\rho_1,\rho_2,\ldots,\rho_s\}$ to high relative accuracy in each
component given the input parameters $X=\{x_1,\ldots,x_k\}$. Depending on
the problem at hand, the
$\rho_i$'s can be entries of the inverse, eigenvalues, singular values,
etc.

We also assume that the output $\mathcal R$ ``deserve'' to be computed
accurately.  Namely, the $\rho_i$'s are accurately determined by the
$x_i$'s.  In other words, small relative perturbations in any $x_i,
i=1,2,\ldots,k,$ cause small perturbations in any $\rho_j, j=1,2,\ldots,s$.

This is critical: If small relative perturbations in the $x_i$'s were to
cause large perturbations in some $\rho_j$, then one could question the
rationale of computing that $\rho_j$ in the first place. Indeed, what
possible practical utility can be drawn from a computed quantity
if mere uncertainty in the input (or even the perturbations from just
storing that input in the computer) can cause it to vary
tremendously?

To make matters precise, we
say that the output $\mathcal R$ is accurately determined by the input
$X$ if small $\delta$ relative perturbations in the $x_i$ cause small
relative perturbations in the $\rho_i$:
\begin{equation}
|\rho_i-\hat\rho_i|\le \mathcal O(\delta)|\rho_i|\frac{1}
{\relgap_i},
\label{relgapbound}
\end{equation}
where
$$ \relgap_i\equiv \min_{1\le j\le k}
\left|\frac{x_i-x_j}{x_i}\right|^\alpha
$$
is the minimum relative gap between the $x_i$'s for some nonnegative
(fixed) number $\alpha$, typically 0 or 1. The appearance of
${\rm rel\_gap}$ in \eqref{relgapbound} should not be a cause for concern.
Factors of type $x_i-x_j$ do not result in subtractive cancellation and
are computable accurately. In practice, ${\rm rel\_gap}_i$ is seldom
small, since the $x_i$'s are physical quantities, roots of orthogonal
polynomials, etc., which are fairly well separated.

Our next observation is key.  The {\em computed\/} output $\rho_i$ are
(always) rational functions of the $x_i$'s.  This not only true for
the entries of e.g., the inverse and LDU decompositions, but also for
the {\em computed\/} eigenvalues and singular values since the latter
are results of finite (in practice) rational processes.

This is the source of our motivation to look at rational (and polynomial)
expressions that can be accurately and efficiently evaluated.

We believe that if a $\rho_i$, as a rational function in
$x_1,x_2,\ldots,x_k$, satisfies \eqref{relgapbound}, then $\rho_i$, in
factored form, can only have factors that are (powers of) $x_i$,
$x_i\pm x_j$, or polynomials $g(x_{i_1},x_{i_2},\ldots,x_{i_t})$ with
positive coefficients whose arguments $x_{i_1},\ldots,x_{i_t}$ are
real and nonnegative.  This has, to the best of our knowledge, been
the case with all existing accurate matrix algorithms.  The existence
of such a factorization immediately implies that \eqref{relgapbound}
holds and provides a way to compute $\rho_i$ in a way that does not
involve subtractive cancellation.\footnote{Provided, of course,
the cost of such a computation does not exceed $\mathcal O(n^3)$,
again something that has been the case in practice.} It is interesting
to know whether such a factorization is also necessary for rational
function in order for it to satisfy the bound \eqref{relgapbound}.
This is an open problem and the topic of our current research as well
as this paper.

Consider, for example, the determinants of Cauchy
$C=\big[\frac{1}{y_i+z_j}\big]_{i,j=1}^n$ and Vandermonde
$V=\big[x_i^{j-1}\big]_{i,j=1}^n$
matrices:
\begin{equation}
\det(C)=\frac{\prod_{i<j}(y_i-y_j)
\prod_{i<j}(z_i-z_j)
}
{\prod_{i,j}(y_i+z_j)
}
\hskip0.3in\mbox{ and }\hskip0.3in
\det(V)=
\prod_{i>j}(x_i-x_j)
,
\label{CauchyVandermondeDet}
\end{equation}
which are accurately determined by the parameters $\{y_i,z_i\}_{i=1}^n$
and $\{x_i\}_{i=1}^n$, respectively.

In practice, most any linear algebra computations involve
subtractions. Even if we do know ahead of time that the output is a nicely
factored function of the input, explicit expressions for these rational
functions are rarely known ahead of time.

The trick in designing accurate algorithms has thus been to utilize an
existing linear algebra algorithm which ``respects'' the matrix structure
(meaning intermediate matrices will have the same structure as the
original one) and cast the matrix operations as operations on the
parameters defining the intermediate matrices instead of operations on the
matrix entries themselves. We illustrate this with an example.

\begin{example}[Accurate LDU of M-matrix \cite{mmatsvd, ocinneide96,
PenaETNA}] \label{example-M}

Say we want to compute the LDU decomposition (from Gaussian
elimination with complete pivoting) of a (row) diagonally dominant
M-matrix $A$.  The matrix $A$ can be parameterized by its nonnegative
row sums $s_i$ and nonpositive off-diagonal elements $a_{ij}=-b_{ij},
i\ne j,$ where $b_{ij}\ge 0$.  (This way the $n^2$ parameters
$\{s_i\}_{i=1}^n \cup \{a_{ij}| i\ne j\}_{i,j=1}^n$ are independent,
real, and nonnegative.)

Note that the diagonal elements, $a_{ii}$, are not given explicitly or as
parameters, but are readily available as a sum of positives:
\begin{equation}
a_{ii}=s_i+\sum_{i=1}^nb_{ij}.
\label{a_ii}
\end{equation}

The Schur complements computed using Gaussian elimination with complete
pivoting inherit the diagonally dominant M-matrix structure. The
parameters defining the Schur complement---the row sums (call them
$s_i'$)
and off-diagonal elements (call them $a_{ij}'=-b_{ij}'$)---are rational
functions with positive
coefficients in the $s_i$'s and $b_{ij}$'s (see
\cite{mmatsvd, ocinneide96, PenaETNA} for details):
$$
s_i'=s_i+\frac{b_{i1}}{a_{11}}s_1,\hskip0.2in
b_{ij}'=b_{ij}+\frac{b_{i1}}{a_{11}}b_{1j},
$$
with $a_{ii}$ given by \eqref{a_ii}.

Since none of the above expressions involve any subtractive
cancellation, the LDU decomposition will be computed accurately.

Using this accurate LDU decomposition, we can, for example, readily
compute the SVD of $A$ accurately using the method of Demmel {\em et al.\/}
\cite{DGESVD}.
\end{example}

By exploiting this and other techniques that we will describe below a
number accurate algorithms were derived for a variety of classes of
structured matrices---see Table~\ref{table1} for details. Next, we
formally define what constitutes an accurate linear computation for each
of the columns in that table.

\subsection{Definitions of accuracy in linear algebra computations}
\label{sec_defs}

We say that the determinant or a minor are accurate if they are computed
to high relative accuracy. We say that an inverse or a the solution to a
linear system is computed accurately only if they are computed to high
relative accuracy componentwise.

In Table~\ref{table1}, the notation ``Gauss.\ elim.\ NP, PP, and CP''
stands for the LDU decomposition of a matrix resulting from Gaussian
elimination with no, partial, or complete pivoting, respectively.

The notation ``RRD'' stand for a {\em Rank Revealing
Decomposition.\/} An RRD is a decomposition
$$
A=XDY
$$
where $X$ and $Y$ are well conditioned matrices and $D$ is diagonal. The
SVD is the ``ultimate'' RRD, but there are others; the LDU
decomposition from Gaussian elimination with complete pivoting is one
such example.

For the LDU, RRD, and SVD decompositions, we only
impose relative accuracy
constraints on the diagonal factor: An LDU
decomposition (with partial, complete, or no pivoting) is called accurate
if each (diagonal) entry of $D$ is computed
accurately. The $L$ and $U$ factors need only be computed
with high absolute accuracy:
\begin{equation}
\| L-\hat L\|\le \mathcal O(\varepsilon) \|L\|
\label{Labs}\end{equation}
and similarly for $U$.

Analogously, for the SVD and the symmetric eigenvalue decompositions,
$A=U\Sigma V^T$ and $A=Q\Lambda Q^T$, respectively, each singular value
and eigenvalue must be computed to high relative accuracy. The factors $U,
V,$ and $Q$ need only be accurate norm-wise in the sense of
\eqref{Labs}.

For the nonsymmetric eigenvalue problem we only require that each
eigenvalue be computed to high relative accuracy. Determining appropriate
accuracy requirements for computed (nonsymmetric) eigenvectors is still an
open problem.

In the QR decomposition we require that $Q$ be accurate norm-wise and $R$
be accurate componentwise.

The notation ``NE'' stands for {\em Neville elimination,\/} an elimination
technique, analogous to Gaussian elimination, in which zeros are created
by using adjacent rows instead of using a fixed pivot row. The bidiagonal
decompositions resulting from Neville elimination are fundamental in the
design of accurate algorithms for totally nonnegative matrices and such a
decomposition is deemed accurate only if it is accurate componentwise.

The column ``$Az=b$'' stands for the solution $z$ to a linear system. We
call it accurate only each $z_i$ is accurate. This appears to be only
possible when $A$ is totally nonnegative or sign-regular and only when the
right hand side has alternating sign pattern.

In terms of efficiency, all algorithms must run in time that is a low
order polynomial in the dimension $n$ of the matrix.  Most algorithms
run in time not exceeding $\mathcal O(n^3)$, about the same as the
conventional algorithms, although there are a few algorithms that take
$\mathcal O(n^4)$ time.

} % ignore
\begin{table}
\begin{center}

%\resizebox{5in}{1.5in}
{
\begin{tabular}{|l|c|c|c|c|c|c|c|c|c|c|c|c|c|}

\hline

Type of &&& Any &\multicolumn{3}{|c|}{Gauss.
elim.}
 &&&&&&& \\
 matrix
&$\!\! \det A\!\!$& $\!\!A^{-1}\!\!$&$\!\!\!$minor$\!\!\!$
& NP & PP & CP
& \!\!RRD\!\! &  \!\!QR\!\!
& \!\!NE\!\! & \!\!$Az\!=\!b$\!\! &
\!\!SVD\!\! & \!\!EVD\!\! & \!\!Ref\!\!
\\

\hline
\hline
 &&&&&&&&&&&&&\\
\hugeX
Acyclic
& $n$
& $n^2$
& $n$
& $n^2$
& $n^2$
& $n^2$
& $n^2$
&&&
& $n^3$
& 
& \cite{DGESVD}
\\

\hline

&&&&&&&&&&&&&\\
\hugeX
DSTU
& $n^3$
& $n^5$
& $n^3$
& $n^3$
& $n^3$
& $n^3$
& $n^3$
&&&
& $n^3$
&
& \cite{DGESVD,PelaezMoro06}
\\

\hline

&&&&&&&&&&&&&\\
\hugeX
TSC
& $n$
& $n^3$
& $n$
& $n^4$
& $n^4$
& $n^4$
& $n^4$
&&&
& $n^4$
&
& \cite{DGESVD,PelaezMoro06}
\\
\hline

\hugeX Diagonally
&&&&&&&&&&&&&
\\
dominant
& $n^3$
&
& No
& $n^3$
&
& $n^3$
& $n^3$
&&&
& $n^3$
& 
& \cite{QiangYeDD1}
\\
\hline

&&&&&&&&&&&&&\cite{alfaxueye2,mmatsvd}
\\
\hugeX
M-matrices
& $n^3$
& $n^3$
& No
& $n^3$
&
& $n^3$
& $n^3$
&&&
& $n^3$
&
& \cite{ocinneide96,PenaETNA}
\\
\hline
\hugeX
Cauchy
&&&&&&&&&&&&&\\
(non-TN)& $n^2$
& $n^2$
& $n^2$
& $n^2$
& $n^3$
& $n^3$
& $n^3$
&&$n^2$&
& $n^3$
&
& \cite{boroskailatholshevsky1,demmel98}
\\
\hline

\hugeX
Vandermonde 
&&&&&&&&&&&&&\cite{bjorckpereyra,demmel98}\\
(non-TN)&  $n^2$
             &
             & No
             &
             &
             &
             &  $n^3$
&&$n^2$&
             &  $n^3$
             &
& \cite{polyvandsvd,higham90e}
\\
\hline

\hugeX
Displacement
&&&&&&&&&&&&&\\
rank one & $n^2$
&
&
&
&
&
& $n^3$
&&&
& $n^3$
&
& \cite{demmel98}
\\

\hline

\hugeX
Totally   
&&&&&&&&&&&&&\\
nonnegative
     &  $n  $
               &  $n^3$
               &  $n^3$
               &  $n^3$
               &  $n^4$
               &  $n^4$
               &  $n^3$
               &  $n^3$
               &  $0  $
               &  $n^2$
               &  $n^3$
               &  $n^3$
& \cite{koev05,koev07}
\\
\hline

&&&&&&&&&&&&&\\
%\hugeX
 TN$^J$
               &  $n$
               &  $n^3$
               &  $n^3$
               &  $n^3$
               &  $n^4$
               &  $n^4$
               &  $n^3$
               &  $n^3$
               &  $0  $
               &  $n^2$
               &  $n^3$
               &  $n^3$
& \cite{dopicokoev}
\\

\hline

&&&&&&&&&&&&&\\
\hugeX
Toeplitz
& No
& 
& No
& No
& No
& No
& No
& No
& No
& 
& No
& No
& \cite{focm}

\\
\hline

\end{tabular}
} % resize
\end{center}

\caption{Existing algorithms for accurate computations with various
classes of structured matrices. Entries like $n^2$ are meant in a 
big-$\mathcal O$ sense; see Section \ref{sec_P_intro} for details.
"No" means no accurate algorithms exist without using arbitrary
precision arithmetic; see Section \ref{sec_conseq} for details.
}
\label{table1}
\end{table}

\ignore{

\subsection{Basic tools}
\label{sec_basic_tools}

Even though the matrix structures in Table \ref{table1} and the
particular algorithms designed for them might, at first, appear very
different, they all comprise of repeated application of several basic
tools that we describe here.

Before we continue, we observe that conventional algorithms can be used
for well conditioned computations---there is no need to design special
algorithms. It is often possible to intersperse accurate and conventional
computations in the same algorithm as we give examples below.

The building blocks of an accurate algorithm are:
\begin{enumerate}

\item If the nature of the computation is finite, i.e., a determinant,
minor, an inverse, an LDU or a QR decomposition, etc., all accurate
algorithms rely on an explicit or recursive expressions that do not
involve subtractive cancellation. This means either:
\begin{itemize}
\item that the output (as a
function of the input) can be factored as (powers of) $x_i$, $x_i\pm x_j$,
a polynomial with positive coefficients and real positive arguments, or
\item that the computation is well conditioned and any possible
subtractive cancellation has no adverse affect on the accuracy in
the output.
\end{itemize}

\item For eigenvalue and SVD problems, there are two SVD problems, that
are accurately by conventional algorithms:
\begin{itemize}

\item the bidiagonal SVD problem, with accurate algorithms based on QR
\cite{demmelkahan} or QD recurrences \cite{parlettfernando}, or

\item the SVD of a product of a diagonal and a well conditioned matrix,
solved accurately by the one-sided Jacobi iteration.

\ignore{
The accuracy in the computation of the bidiagonal SVD is based on the fact
that no subtractive cancellation is suffered by either the QR or QD
approaches\footnote{The QD based approach \cite{parlettfernando} does
perform subtractions---the computational is carefully controlled to avoid
cancellation.}

The one-sided Jacobi preserves the relative accuracy, since the
computation is well conditioned---the Givens rotations operate on rows of
the similar magnitude, thus the diagonal scaling does not affect the
accuracy.
}

\end{itemize}

In an extension of the second approach, Demmel {\em et al.\/}
\cite{DGESVD} presented an algorithm for computing an accurate SVD of
any matrix given an accurate RRD.

All accurate eigenvalue and SVD algorithms are based on accurately
reducing the problem to an equivalent one for one of the above three types
of SVD problems that can be solved accurately.

\end{enumerate}

\subsection{The central role of minors}

Since the LDU decomposition of a matrix resulting from Gaussian
elimination with complete pivoting is, in practice, an RRD, the result of
Demmel {\em et al.\/} \cite{DGESVD} implies that an accurate SVD can be
computed for any matrix for which an accurate LDU decomposition can be
computed.

In turn, each entry of $L,$ $D,$ and $U$ is a quotient of minors of
the original matrix, thus an accurate SVD problem becomes a problem of
computing minors accurately: If all minors of a matrix are
computable accurately, then one can compute an accurate LDU decomposition
(and also determinant, the inverse
(using Cramer's Rule)) and the SVD.

Being able to compute all minors of a matrix is therefore sufficient for
being able to perform virtually all matrix computations accurately.

On the other hand, being able to compute the determinant is necessary for
being able to perform (any) linear algebra operation accurately.

This brings us to the natural question of whether an ``if and only if''
theorem can be formulated: Does there exist a set of minors which is
computable accurately if and only if the SVD is computable accurately.

} % ignore

\subsection{Designing accurate algorithms for different structured classes}
\label{sec_design}

In this section we look at the particular approaches in designing accurate
algorithms for different matrix classes in order to fill the rows of
Table~\ref{table1}, explaining only a few in detail.
Each row refers to a matrix class, and each columns to a linear algebra
problem. A table entry $n^{\alpha}$ means that an accurate linear
algebra algorithm costing $\mathcal O(n^{\alpha})$ arithmetic operations 
for
the given problem and class exists. A ``No'' entry means that no accurate 
algorithm using traditional arithmetic exists, and indeed no accurate 
algorithm exists without using arbitrary precision arithmetic, in a sense
to be made precise in Section~\ref{sec_conseq}.

We begin by explaining some of the terser column headings:
``Any minor'' means that an arbitrary minor of the matrix
may be computed accurately, not just the determinant.
``Gauss. elim NP'' means Gaussian elimination with No Pivoting (GENP),
and similarly ``PP'' and ``CP'' refer to Partial Pivoting (GEPP) and
Complete Pivoting (GECP), resp. ``RRD'' is a Rank Revealing Decomposition
as described above
(frequently but not always the same as GECP). ``NE'' is Neville Elimination
\cite{gascapena92}, a variation on GENP where $L$ and $U$ are represented as
products of bidiagonal matrices (corresponding to elimination where
a multiple of row $i$ is added to row $i+1$ to create one zero entry).
$Az=b$ refers to solving $Az=b$ accurately given conditions on $b$
(alternating signs in its components).

\subsubsection{Acyclic matrices}

A matrix $A$ is called {\em acyclic\/} if its graph is; namely, the bipartite
graph with one node for each row and one node for each column and an edge
$(i,j)$ if $A_{ij}$ is nonzero. Acyclic matrices include
bidiagonal matrices (see Section~\ref{sec_BidiagonalSVD}),
and broken arrow matrices (which are nonzero only on the
diagonal and one row or one column), among exponentially many other
possibilities \cite{demmelgragg}.

Acyclic matrices are precisely the class of matrix sparsity patterns with 
the property that the Laplace expansion of each minor can have at most one 
nonzero term \cite{demmelgragg}. Thus every nonzero minor can be computed 
accurately as the product of $n$ matrix entries. Any acyclic matrix is 
also a DSTU matrix (see the following section), and so the algorithms for 
DSTU matrices may be used.

% For bidiagonal matrices we have specialized high relative accuracy
% SVD algorithms, based on QR \cite{demmelkahan} and QD
% \cite{parlettfernando}.

% and its accurate LDU and
% SVD decompositions can be computed as discussed below.

\subsubsection{DSTU (diagonal scaled totally unimodular) matrices}
\label{sec_DSTU}

A matrix $A$ is called {\em Totally Unimodular (TU)\/}
if all its minors are $0,1,$ or $-1$.
A matrix is {\em Diagonally Scaled Totally Unimodular (DSTU)\/} if it is of
the form $A = D_1ZD_2$, where $D_1$ and $D_2$ are diagonal and $Z$ is totally
unimodular.

Accurate LDU and SVD algorithms  for DSTU matrices were presented 
in \cite{demmel98} and are based on the following observation:
\begin{enumerate}
\item The Schur complement of a DSTU matrix is DSTU.
\item If at any step in the inner loop of Gaussian elimination
the subtraction
\begin{equation}
\label{eqn_innerloopGE}
a_{ij}'=a_{ij}-\frac{a_{ik}a_{kj}}{a_{kk}}
\end{equation}
has two nonzero operands, then the result $a_{ij}'$ must be exactly 0.
\end{enumerate}
In other words, to make Gaussian elimination accurate, a one-line addition
is required to test if both $a_{ij}$ and $\frac{a_{ik}a_{kj}}{a_{kk}}$
are nonzero, and to set $a_{ij}' = 0$ if they are.
Then the modified Gaussian elimination satisfies NIC,
yielding an accurate LDU decomposition.
LDU with complete pivoting yields an
accurate RRD (with $\kappa (L)$ and $\kappa (U)$ both bounded by 
$\mathcal O(n^2)$
\cite[Theorem 10.2]{DGESVD}), and an accurate RRD yields an accurate SVD
as discussed in Section~\ref{sec_RRD}.

If a DSTU matrix is symmetric, Pel\'aez and Moro derived accurate
algorithms that preserve and exploit the symmetry in their matrices
\cite{PelaezMoro06}. They also presented such {\em symmetric\/} algorithms
for TSC matrices discussed next.

DSTU matrices arise naturally in the formulation of eigenvalue
problems for Sturm-Liouville equations \cite{demmelkoev99},
and more general scalar elliptic PDE with suitable finite element
discretizations \cite{DGESVD}. We discuss this further below in
Section~\ref{sec_BeyondNIC}.

\subsubsection{TSC (total signed compound) matrices}

Let $\mathcal S$ be the set of all matrices with a given sparsity and sign
pattern. $\mathcal S$ is called {\em sign nonsingular (SNS)\/} if it contains only
square matrices, and the Laplace expansion of the determinant of each
$G\in\mathcal S$ is the sum of monomials of like-sign,
with at least one nonzero monomial.
$\mathcal S$ is called {\em total signed compound (TSC)\/} if
every square submatrix of any $G\in\mathcal S$ is either SNS, or
structurally singular (i.e., no nonzero monomials appear in its
determinant expansion). Acyclic matrix are obviously a special
case of TSC matrices, with at most one monomial appearing in each minor.

According to \cite[Lemma 7.2]{DGESVD} any minor of a TSC matrix may be
computed accurately using not more than $4n-1$ arithmetic
operations (and not counting various graph traversal operations).
With this computing the LDU decomposition of a TSC
matrix is easy. If at any step of Gaussian elimination the subtraction
in \eqref{eqn_innerloopGE}
is one of same-signed quantities, then $a_{ij}'$ is recomputed as a quotient
of minors, each of which is computed accurately as above. The total cost
could go up to $\mathcal O(n^4)$, but this
is still efficient, according to our convention.

\subsubsection{Diagonally dominant and $M$-matrices}
A matrix $A$ is called {\em (row) diagonally dominant\/}
if the sums $s_i = a_{ii} - \sum_{j \neq i} |a_{ij}|$ are nonnegative
for all rows $i$. If in addition its off-diagonal
entries $a_{ij}$ are nonpositive (so that $s_i = \sum_j a_{ij}$)
then it is called a {\em (row) diagonally dominant $M$-matrix.\/}
It turns out that these off-diagonal matrix entries and the $s_i$,
not the diagonal entries $a_{ii}$, are the right parameters for
doing accurate linear algebra with this class of matrices.
Intuitively, it is clear that the $s_i$ are the natural parameters 
since the conditions $s_i \geq 0$ define the class.

We explain how to do accurate LDU decomposition with no pivoting
or complete pivoting, in the case of a row diagonally dominant M-matrix.
Briefly, the algorithm can be organized to satisfy NIC
(see \cite{mmatsvd,ocinneide96,PenaETNA} for details).
For simplicity of notation, let the $n^2$ matrix parameters
be $b_{ij} = -a_{ij}$ and $s_i$, so all are nonnegative.
The diagonal elements, $a_{ii}$, are readily available accurately
as a sum of positive numbers:
\begin{equation}
a_{ii}=s_i+\sum_{i=1}^nb_{ij}.
\label{a_ii}
\end{equation}

The Schur complements computed using Gaussian elimination with complete
or no pivoting inherit the diagonally dominant M-matrix structure. The
parameters defining the Schur complement---the row sums (call them
$s_i'$)
and off-diagonal elements (call them $a_{ij}'=-b_{ij}'$)---are rational
functions with positive
coefficients in the $s_i$'s and $b_{ij}$'s:
$$
s_i'=s_i+\frac{b_{i1}}{a_{11}}s_1,\hskip0.2in
b_{ij}'=b_{ij}+\frac{b_{i1}}{a_{11}}b_{1j},
$$
with $a_{ii}$ given by \eqref{a_ii}.
Since the above expressions satisfy NIC, the LDU decomposition
computed using them will be accurate, as will the subsequent SVD.

Several improvements on this results have been made.
Pe\~na suggested in \cite{PenaETNA} an alternative diagonal pivoting
strategy which guarantees $L$ and $U$ to be well conditioned (as opposed
to ``well conditioned in practice'' which is what Gaussian elimination
with complete pivoting delivers).
Ye generalized this approach to symmetric diagonally dominant matrices
(removing the restriction on the signs of off-diagonal elements) \cite{QiangYeDD1,
QiangYeDD2}. It turns out that in the process of Gaussian elimination with
complete pivoting updating the $s_i$ and the diagonal entries still satisfies NIC.
However, there can be (arbitrary) cancellation in the off-diagonal entries.
Nonetheless, Ye shows that the errors in the off-diagonal entries can
be bounded in absolute value so as to be able to guarantee that $L$ and
$U$ are computed with small norm-wise errors, which is all that is
required for an RRD to in turn provide an accurate SVD.

\subsubsection{Matrices with displacement rank one}

Matrices $A$ that satisfy the Sylvester equation
$$
DA-AT=B,
$$
where $B=uv^T$ is unit rank, are said to have {\em displacement rank one.\/}
In the easiest case, when $D$ and $T$ are diagonal
($D=\diag(d_1,d_2,\ldots,d_n),\;
T=\diag(t_1,t_2,\ldots,t_n)$), $A$ is a (quasi-Cauchy)
matrix $a_{ij}=\frac{u_iv_j}{d_i-t_j}$
\cite{kailatholshevsky95, kailatholshevsky97}.

The quasi-Cauchy structure is preserved in the process of Gaussian
elimination with complete pivoting \cite{demmel98, DGESVD}. The 
explicit
formula for a determinant (or a minor) of a (quasi-)Cauchy matrix
 satisfies NIC as mentioned before.
In fact, Gaussian elimination can be made accurate still at a cost
of $\mathcal O(n^3)$ just by changing the inner loop from 
\eqref{eqn_innerloopGE} to
\[
a'_{ij} = a_{ij} \cdot \frac{(d_i - d_k)(t_k - t_j)}{(d_k - t_j)(d_i - t_k)}
\]

This is the starting point in computing the SVD of many displacement rank
one matrices. The Vandermonde matrix $V=\big[x_i^{j-1}\big]_{i,j=1}^n$ has
a displacement rank one, where $D=\diag(x_1,x_2,\ldots,x_n)$ and $T$ is
the lower shift matrix $t_{i,i-1}=1, i=1,2,\ldots,n-1, t_{1n}=1$.

Then $DA-AT=(x_1^n-1,x_2^n-1,\ldots,x_n^n-1)^T(0,0,\ldots,0,1)\equiv B$.
The matrix $T$ is circulant (and a root of unity) and is diagonalized
$T = Q \Lambda Q^*$ by
the (unitary) matrix of the DFT $Q_{ij}=\alpha^{(i-1)(j-1)}$,
where $\alpha$ is a primitive $n$th root of unity, with eigenvalues
$\Lambda_{ii} = \alpha^{(i-1)(n-1)}$.

Thus $DA-AQ\Lambda Q^*=B$, and so $D(AQ)-(AQ)\Lambda=BQ$, i.e., $AQ$ is a
quasi-Cauchy matrix (since BQ still has rank one). Now from an accurate
SVD of $AQ=U\Sigma V^*$ we automatically obtain an accurate SVD of
$A=U\Sigma (QV)^*$. But note that we need both the constant matrices
$Q$ and $\Lambda$ for this to work, which goes beyond NIC.

The same idea generalizes to other displacement rank one matrices. For
example, if $DA-AQ=B$ and $D$ and $T$ are unitarily
diagonalizable, $D=QD_1Q^*$ and $T=SD_2S^*$, then
$$
D_1 (Q_1^*AQ_2) - (Q_1^*AQ_2) D_2 = (Q_1^* u)(v^T Q_2)
$$
and $Q_1^*AQ_2$ is a quasi-Cauchy matrix. If the decompositions
$D=QD_1Q^*$ and $T=SD_2S^*$, and the products
$Q_1^* u$ and $v^T Q_2$ can be formed accurately, then from an accurate
SVD of the quasi-Cauchy matrix
$Q_1^*AQ_2=U\Sigma V^*$ we obtain an accurate SVD of $A$:
$A=(Q_1U)\Sigma (Q_2V)^*$. This approach works, e.g., for polynomial
Vandermonde matrices involving orthogonal polynomials
\cite{polyvandsvd} (see also \cite{DGESVD, demmel98,
HighamPolyVand, kailatholshevsky97}), but again requires knowing
certain constants accurately, thus going beyond NIC.

\subsubsection{Totally nonnegative and TN$^{J}$ sign regular matrices}
\label{sec_TN}

The matrices all of whose minors are nonnegative are called {\em  Totally 
Nonnegative (TN).\/} Despite this seemingly severe restriction on the minors,
TN matrices arise frequently in practice---a Vandermonde matrix with
positive and increasing nodes, the Pascal matrix, and the Hilbert matrix
are all examples of TN matrices. The first reference in the literature
(that we are aware of) for accurate matrix computations dates back to 1963
for a Vandermonde matrix with positive and increasing nodes
in an example of Kahan and Farkas \cite{KahanF63,KahanF63a,KahanF63b}.
This phenomenon
was rediscovered by Bj\"orck and Pereyra in their celebrated paper
\cite{bjorckpereyra} and later carefully analyzed and generalized
\cite{boroskailatholshevsky1, higham87BP, higham90e, marcomartinez,
demmelkoevBP, martinezpena, martinezpena2, martinezpena03}. All these
methods are based on explicit decompositions of the corresponding matrices
where all entries of the decompositions may be computed with
expressions satisfying NIC.

These ideas generalize to {\em any\/} TN matrix \cite{koev05, koev07} and are
based on a structure theorem for TN matrices
\cite{fallat01, gascapena92, gascapena96}: Any nonsingular TN matrix can 
be
decomposed as a product of nonnegative bidiagonal factors:
\begin{equation}
A=L^{(1)}L^{(2)}\cdots L^{(n-1)}DU^{(n-1)}\cdots U^{(1)}.
\label{bidiag}
\end{equation}
As mentioned before, this variation on Gaussian elimination,
called Neville elimination,
arises by eliminating all off-diagonal matrix entries
by adding a multiple of row (resp., column) $i$ to row (resp., column)
$i+1$ to zero out one entry,
and eliminating entries diagonal by diagonal, from
the outermost (with row (resp., column) multipliers stored in
$L^{(1)}$ (resp., $U^{(1)}$)) to innermost (with row (resp., column) 
multipliers
stored in $L^{(n-1)}$ (resp., $U^{(n-1)}$)).
There are exactly $n^2$ independent nonnegative parameters in the above
decomposition. They parameterize the space of {\em all\/} TN matrices.

It turns out that it is possible to perform essentially all linear algebra
on TN matrices by using only TN-preserving transformations.
In other words, given the parameterization of $A$ in (\ref{bidiag}),
it is possible to accurately compute the parameterization of
a submatrix, (unsigned) inverse, Schur complement, converse, or
product of two such matrices, all in $\mathcal O(n^3)$ time and satisfying 
NIC
\cite{koev07}. In other words, the ability to do accurate linear algebra
is ``closed'' under all these operations.
Furthermore, based on NIC,
it is possible to accurately reduce such a parameterized
matrix to bidiagonal form, enabling an accurate SVD, and to accurately
reduce it to tridiagonal form $T = BB^T$ by a similarity,
reducing the nonsymmetric eigenvalue problem to an accurate SVD \cite{koev05}.
Thus, virtually all linear algebra with TN matrices can be performed accurately.

The only remaining question is about the starting point of this
approach -- the accurate bidiagonal decompositions of the original matrix.
The entries of the bidiagonal decomposition are products of quotients of
initial minors (i.e., contiguous minors that include the first row or
column). Thus for virtually all well known TN matrices -- Pascal, Vandermonde,
Cauchy (as well as their products, Schur complements, etc.) there are
accurate formulas for their computation
\cite{boroskailatholshevsky1,koev05,martinezpena,martinezpena03}.
\ignore{
{\em Plamen: shouldn't we say something about your result for
generalized Vandermondes, that the obvious formula for a Schur function
as a positive but exponentially large sum can be reduced to polynomial
size? I'm not sure what the best statement is, or which of your papers
to refer to, for getting all the (nested) Schur functions needed
for NE.}
}

A matrix is {\em sign regular\/} \cite{gantmacher_krein} if all minors of the same order have the
same sign (but not necessarily all positive as is the case with TN
matrices).
A row- or a column-reversed TN matrix is sign regular, and the
class is such matrices is denoted TN$^J$.
Most linear algebra problems for TN$^J$ matrices follow trivially from the
corresponding TN algorithms, except for the eigenvalue algorithm
\cite{koev07}, which requires a TN$^J$-preserving transformation into a
symmetric anti-bidiagonal matrix.

We believe that the eigenvalue algorithms for TN and TN$^J$ are the first
examples of accurate eigenvalue algorithms for nonsymmetric matrices.

\subsection{Going beyond NIC (no inaccurate cancellation)}
\label{sec_BeyondNIC}

We have cited several examples where we can do more general
classes of accurate structured matrix computations by using
more general building blocks than permitted by insisting
on no inaccurate cancellation (NIC).

An accurate SVD of a Vandermonde matrix required knowing
roots of unity accurately (or more precisely, being
able to perform the operation $x - \alpha$ accurately,
where $\alpha$ is a root of unity).
More general displacement rank one problems required
similar accurate operations for constants $\alpha$
drawn from eigenvalues from a fixed sequence of
matrices, as well as the knowledge of the orthogonal
eigenvectors of these matrices.

Most interestingly, by allowing ourselves to accurately compute a given set 
of polynomials, but all of bounded numbers of terms and degrees, we can 
extend our DSTU approach from being able to accurately find eigenvalues of 
only rather simply discretized differential equations, to accurately 
compute all the eigenvalues of the scalar elliptic partial differential 
equation $\nabla \cdot (\theta \nabla u) + \lambda \rho u = 0$ on a domain 
$\Omega$ with zero Dirichlet boundary conditions, where $\theta(x)$ and 
$\rho(x)$ are scalar functions discretized on a general triangulated mesh 
in a standard way (isoperimetric finite elements on a triangulated mesh). 
In this case it is the smallest eigenvalues that are of physical interest, 
and they are accurately determined by the coefficients of the PDE. This 
result depends on a novel matrix factorization of the discretized 
differential operator in \cite{BomanHendricksonVavasis}.

It is examples such as these that encourage us to systematically
ask what expressions we can accurately evaluate, including by
allowing ourselves additional ``black boxes'' as building blocks.
This is the topic of the next section.

%  List: any missing?
%  DSTU: FEM
%  Vandermonde (roots of unity)
%  PolyVander/Unit Disp Rank
%  Diagdom
%  Fastest Schur?
%  input functions?

\section{Accurate algorithms for polynomial evaluation}
\label{sec_OlgaIoana}

In this section we give a partial answer to the question ``when can a multivariate (real or complex) polynomial be evaluated accurately?''  These results (except for Section \ref{pos_res}) have been published, with completely rigorous proofs, in \cite{focm}; we provide here intuitions and proof sketches. 

To summarize the content of this section, we give (sometimes tight) necessary and sufficient conditions for accurate multivariate polynomial evaluation over given domains. These conditions depend strongly on the type of arithmetic chosen, specifically on the type of ``basic'' operations allowed, as well as on the domain that the inputs are taken from (and also on whether the inputs belong to $\mathbb{R}^n$ or to $\mathbb{C}^n$).

Intuitively, accurate evaluation of small quantities is a more complicated issue than accurate evaluation of large quantities; thus the ``interesting'' domains, as we will see, lie arbitrarily close to or intersect the \emph{variety} of the polynomial (the set of points where the polynomial is $0$). Evaluation on domains that are not of this type (but are otherwise sufficiently well-behaved) is easy (see Section \ref{sec_PositivePolys}). Therefore, the variety plays a \emph{necessary role}.

\begin{example} To illustrate the role of the variety, we use the following example. Consider the 2-parameter family of polynomials
\[
M_{jk}(x) = j \cdot x_3 ^6 + x_1^2 \cdot x_2^2 \cdot
(j \cdot x_1^2 + j \cdot x_2^2 - k \cdot x_3^2)~,
\]
where $j$ and $k$ are positive integers, and the domain of evaluation is $\mathbb{R}^3$. Assume that 
we allow only addition, subtraction and multiplication
of two arguments as basic arithmetic operations, along
with comparisons and branching.

When $k/j < 3$, $M_{jk}(x)$ is {\em positive definite,\/} i.e.,
zero only at the origin and positive elsewhere.  This will mean that
$M_{jk}(x)$ is easy to evaluate accurately using a simple method
discussed in Section \ref{sec_PositivePolys}.

When $k/j > 3$, then we will show that $M_{jk}(x)$
cannot be evaluated accurately by {\em any\/} algorithm
using only addition, subtraction and multiplication of
two arguments. This will follow from a simple necessary
condition on the real variety $V_{\Rr}(M_{jk})$,
the set of real $x$ where $M_{jk}(x)=0$, see Theorem~\ref{conj}.

When $k/j=3$, i.e., on the boundary between the above two cases,
$M_{jk}(x)$ is a multiple of the Motzkin polynomial
\cite{reznick2000}. The real variety $V_{\Rr}(M_{jk}) = \{x: |x_1| = |x_2| = |x_3| \}$ of this polynomial
satisfies the necessary condition of Theorem~\ref{conj}, and the simplest
accurate algorithm to evaluate it that we know of has 8 cases depending on
the relative values of $|x_i \pm x_j|$. For example, on the branch defined by the inequalities $x_1 - x_3| \leq |x_1+x_3| \wedge |x_2 - x_3| \leq x_2+x_3|$, the algorithm evaluates $p$ using the non-obvious formula
\begin{eqnarray*}
 p(x_1, x_2, x_3) & = & x_3^4 \cdot [4((x_1-x_3)^2 + (x_2-x_3)^2 + (x_1-x_3)(x_2-x_3))] \\
         &   &  ~+~ x_3^3 \cdot [2(2(x_1-x_3)^3 + 5(x_2-x_3)(x_1-x_3)^2  \\       &   &   \hspace*{.5in} +~ 5(x_2-x_3)^2(x_1-x_3) + 2(x_2-x_3)^3)] \\
         &   & ~+~ x_3^2 \cdot [(x_1-x_3)^4 + 8(x_2-x_3)(x_1-x_3)^3 + (x_2-x_3)^4 \\         &   &  \hspace*{.5cm}  +~ 9(x_2-x_3)^2(x_1-x_3)^2 +  8(x_2-x_3)^3 (x_1-x_3)] \\
         &   & ~+~ x_3 \cdot [2(x_2-x_3)(x_1-x_3)((x_1-x_3)^3 + (x_2-x_3)^3 \\
        &   &  \hspace*{.5in} +~2(x_2-x_3)(x_1-x_3)^2 + 2(x_2-x_3)^2(x_1-x_3)] \\
        &   & ~+~ (x_2-x_3)^2(x_1-x_3)^2((x_1-x_3)^2 + (x_2-x_3)^2)~.
\end{eqnarray*}

In contrast to the real case, when the domain is $\C^3$, Theorem~\ref{conj} will show that $M_{jk}(x)$ cannot be
accurately evaluated using only addition, subtraction and
multiplication.
\end{example}

The necessary conditions we obtain for accurate evaluability depend only on the variety of $p(x)$, but the variety alone is not always enough.

\begin{example} Consider the irreducible, homogeneous, degree $2d$, real polynomial
\[
p(x) = (x_1^{2d} + x_2^{2d}) + (x_1^2 + x_2^2)(q(x_3,...,x_n))^2~,
\]
where $q(\cdot)$ is homogeneous of degree $d-1$.
The variety $V(p) = \{ x_1=x_2=0 \}$ satisfies the necessary condition for
accurate evaluability, but near $V(p)$ the polynomial
$p(x)$ is ``dominated'' by
$(x_1^2 + x_2^2)(q(x_3,...,x_n))^2$, so accurate
evaluability of $p(x)$ depends on the accurate evaluability of $q(\cdot)$.

We may now apply the same principle to
$q( \cdot )$, etc., thus creating a decision tree of polynomials.
Rather than a characterizing theorem, one might expect therefore that, in many cases, the answer can only be given by a recursive decision procedure, expanding $p(x)$ near the components of its variety and so on. We discuss this more in Section \ref{sec_traditional}. 
\end{example} 

The rest of Section \ref{sec_OlgaIoana} is structured as follows. 
In Section \ref{sec_models}, we formalize the type of algorithms we are interested in. 
Section \ref{sec_PositivePolys} makes rigorous the intuition that accurate evaluation ``far from the variety'' is possible. 
Section \ref{sec_traditional} considers the traditional model of arithmetic, on ``well-behaved'' domains similar to the ones chosen for the algorithms of Section \ref{sec_Plamen}. This model has three basic operations: $+, -, \times$, and allows for exact negation. While not sufficient for the accurate evaluation \emph{everywhere} of even simple polynomial expressions like $x+y+z$, the traditional model is simple enough to allow us to give a characterization of accurately evaluable \emph{complex} polynomials, as well as (generally distinct) necessary and sufficient conditions for accurate evaluability of real polynomials (sometimes these conditions are identical, and offer a complete characterization). In addition, for the real case, we show current progress toward constructing a decision procedure for accurate evaluability of real polynomials.
Section \ref{sec_extended} expands the practical scope of our analysis, since concluding that a computation is "impossible" is not the end of the story;  instead, this begs the question of what additional computational building blocks would be needed to make it possible? For example, current computers often have a "fused multiply-add" instruction $x + y \cdot z$ that computes the answer with one rounding error, and there are software libraries that provide collections of accurately implemented polynomials needed for certain applications, e.g., computational geometry \cite{shewchuk}. Given any such a collection of what we will call ``black-box'' operations (about which we assume only a small relative error), we will ask how much larger a set of polynomials can be evaluated accurately.

Finally, Section \ref{sec_conseq} discusses the implications of these results. Firstly, they shed some light on the existence of accurate algorithms for linear algebra operations like the ones described in Section \ref{sec_Plamen}: each such algorithm satisfies NIC (see Section \ref{sec_intro}, and thus also satisfies the necessary condition for accurate evaluability presented in Theorem \ref{conj}). The apparently unrelated classes of structured matrices for which efficient and accurate linear algebra algorithms exist share a common underlying algebraic structure. Also, there may be other structured matrix classes sharing this property and for which accurate algorithms could be built.
Secondly, our results show that some expressions or classes of problems \emph{cannot} be accurately evaluated, even with an arbitrary set of bounded-degree black-box operations at our disposal. The practical implication of this is that, for certain types of problems, the use of arbitrarily high precision is necessary (see Section \ref{sec_OtherModels}).  Lastly, but perhaps most importantly, our results lay down a path toward the ultimate goal: a decision procedure (or ``compiler'') which, given as inputs a polynomial $p$, a domain $\mathcal{D}$, and (perhaps) a set of black-box operations, either produces an accurate algorithm for the evaluation of $p$ on $\mathcal{D}$ (including how to choose the machine precision $\epsilon$ for the desired relative error $\eta$, see Section \ref{sec_models}), or exhibits a ``minimal'' set of black-box operations that are still needed. 

\subsection{Formal statement and models of algorithms} \label{sec_models}

We formalize here both the problem and the models of algorithms we will use. We introduce the notation $p_{comp}(x, \delta )$ for the output of the algorithm, and $\delta = ( \delta_1  , \delta_2, ... , \delta_k )$ for the vector of rounding errors. 

For example, consider the algorithm that computes $p(x) = x_1+x_2+x_3$ by performing two additions: first adds $x_1$ to $x_2$, then adds the result to $x_3$. If the first and second additions introduce the relative errors $\delta_1$, respectively $\delta_2$, we obtain that, for this algorithm,
\begin{eqnarray}
p_{comp}(x, \delta) & = & \left( (x_1+x_2) (1+\delta_1) + x_3 \right) (1+ \delta_2) \nonumber \\
& = & (x_1+x_2+x_3)(1+ \delta_2) + (x_1+x_3) \delta_1 (1+\delta_2)~. \label{gugu}
\end{eqnarray}

We give below a formal description of the algorithms we consider.
For more in-depth discussion of these assumptions and comparisons with other models
of computations, see  Section~\ref{sec_OtherModels}.

\begin{definition} All algorithms considered in this section will satisfy the following constraints.
\begin{enumerate}

\item
The inputs $x$ are given exactly, rather than approximately.

\item
The algorithm always computes the output $p_{comp}(x,\delta)$ in finitely many steps and, moreover, computes  the exact value
of $p(x)$ when all rounding errors $\delta = 0$.  This constraint 
excludes iterative algorithms which might produce an approximate value
of $p(x)$ even when $\delta = 0$. Some of the reasons for this choice can be found in Section \ref{sec_Tools}.

\item
The basic arithmetic operations 
beyond the traditional addition, subtraction and multiplication, if any, must be given explicitly. We refer to the case when additional polynomial
operations are included as \emph{extended arithmetic}.
Constants are available to our algorithms only in the extended model and are also given explicitly.

\item
We consider algorithms both with and without comparisons and
branching, since this choice may change the set of polynomials
that we can accurately evaluate. In the branching case, note that $p_{comp}(x, \delta)$ will actually be piecewise polynomial. 
% include discussion of next section here

% \item[Sec.~\ref{sec_BitExtraction}:]
% We exclude operations that round real (or complex numbers) to
% rational numbers. Such operations
% since this means we could potentially convert all our
% inputs into highly accurate rational numbers and perform all
% subsequent operations exactly. This is not in the spirit
% of our investigation.

\item
If the computed value of an operation depends only on the
values of its operands, i.e., if the same operands $x$ and $y$
of $op(x,y)$ always yield the same $\delta$ in
$rnd(op(x,y)) = op(x,y) \cdot (1+ \delta)$, then we
call our model {\em deterministic,\/} else it is
{\em nondeterministic.\/} One can show that comparisons and branching
let a nondeterministic machine simulate a deterministic one,
and subsequently restrict our investigation to the easier
nondeterministic model.
% A further question is whether our
% algorithms are permitted to reuse intermediate results more
% than once; we consider both kinds of algorithms.
\end{enumerate}
\end{definition}

Finally, we must formalize what type of domains we consider.
Though, in principle, any semialgebraic set $\cal D$ could be examined, for simplicity we consider open domains $\cal D$, especially ${\cal D} = \Rr^n$ or ${\cal D} = \C^n$. 
We can now give the formal definition of accuracy.

\begin{definition}
We say that $p_{comp}(x, \delta)$ is an {\em accurate algorithm}
for the evaluation of $p(x)$ for $x \in {\cal D}$ if 

\vspace{.15cm}

\hspace*{-0.10in}    $\forall \; 0 < \eta < 1 \; \; \; \; \;$ ...
                 for any $\eta$ = desired relative error \\
\hspace*{0.23in} $\exists \; 0 < \epsilon < 1 \; \; \; \;$ ...
                 there is an $\epsilon$ = machine precision \\
\hspace*{0.46in} $\forall \; x \in {\cal D} \; \; \; \; \;$ ...
                 so that for all $x$ in the domain \\
\hspace*{0.69in} $\forall \; |\delta_i| \leq \epsilon \; \; \;$ ...
                 and for all rounding errors bounded by $\epsilon$ \\
\hspace*{0.92in} $|p_{comp}(x,\delta) - p(x)| \leq \eta \cdot |p(x)|$ ...
                 the relative error is at most $\eta$.
\\
\end{definition}

Note that the algorithm proposed above, which produces the $p_{comp}$ given in \eqref{gugu} for the evaluation of $x_1+x_2+x_3$ is not an accurate algorithm (consider the case when $x_1+x_2 = -x_3$). This is not accidental (see Theorem \ref{conj}).

%Recall that the ultimate goal is a decision procedure which, given an arithmetic (described by a set of operations), a polynomial $p$ and a domain $\mathcal{D}$, produces an accurate algorithm for the evaluation of $p$ on $\mathcal{D}$ (including how to choose the machine precision $\epsilon$ for the desired relative error $\eta$), or exhibits a proof that none exists (and perhaps a ``minimal'' set of black-box operations that are still needed). 

Given an algorithm producing a polynomial $p_{comp}$, the problem of deciding whether it is accurate \emph{is} a Tarski-decidable problem \cite{renegar92,tarski_book}. What is unclear if whether the \emph{existence} of an accurate algorithm for a given polynomial and domain is a Tarski-decidable problem, since we see no way to express ``there exists an algorithm'' in the required format.

\subsection{The bounded from below case (empty variety)}
\label{sec_PositivePolys}
 
We consider the simpler case where the polynomial $p(x)$
to be evaluated is bounded (in absolute value) above and below, in an appropriate manner, on the domain $\cal D$ (this is what we referred to previously as ``far from the variety'', i.e., the set where the polynomial is $0$).
If the domain $\cal D$ is compact, we give here, with proof, the following theorem. (We let $\bar{\cal D}$ denote the closure of $\cal D$.)

\begin{theorem}
\label{thm_positive_compact}
Let $p_{comp} (x,\delta)$ be {\em any} algorithm computing
$p(x)$ satisfying $p_{comp}(x,0) = p(x)$,
i.e., it computes the right value in the absence of rounding error.
Let $p_{min} := \inf_{x \in \bar{\cal D}} |p(x)|$.
Suppose $\bar{\cal D}$ is compact and $p_{min} > 0$.
Then $p_{comp}(x,\delta)$ is an accurate algorithm
for $p(x)$ on $\cal D$.
\end{theorem}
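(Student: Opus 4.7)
The plan is a compactness plus uniform continuity argument. Since every basic rounded operation has the form $op(u,v)\cdot(1+\delta_j)$, unrolling the algorithm shows that the computed output $p_{comp}(x,\delta)$ is a polynomial in $(x,\delta_1,\dots,\delta_k)$, where $k$ is the total number of rounding errors used (and at worst a piecewise polynomial if comparisons and branching are allowed). I would work throughout on the compact set $K := \bar{\cal D}\times[-1,1]^k$.

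First I would establish that on $K$ the map $(x,\delta)\mapsto p_{comp}(x,\delta)$ is continuous in the straight-line case, and in the branching case it is at least continuous in $\delta$ at the slice $\{\delta=0\}$ uniformly in $x\in\bar{\cal D}$. In the branching case, the algorithm partitions $K$ into finitely many semialgebraic regions $R_i$ on each of which $p_{comp}$ coincides with a polynomial $P_i$. If a branch region $R_i$ contains points with $\delta$ arbitrarily close to $0$, then a simple limiting argument gives $\overline{R_i}\cap(\bar{\cal D}\times\{0\})\ne\emptyset$, and the global hypothesis $p_{comp}(x,0)=p(x)$ together with continuity of $P_i$ propagates the equality $P_i(x,0)=p(x)$ to the relevant points of that intersection.

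Next I would convert uniform continuity into a relative error bound. Given any $\eta\in(0,1)$, set $\eta':=\eta\,p_{min}>0$. By uniform continuity of $p_{comp}$ on $K$ (applied branch by branch in the branching case), there exists $\epsilon\in(0,1)$ such that $|\delta_j|\le\epsilon$ for all $j$ and $x\in\bar{\cal D}$ imply
\[
|p_{comp}(x,\delta) - p_{comp}(x,0)|\;<\;\eta'.
\]
Combining this with $p_{comp}(x,0)=p(x)$ and the lower bound $|p(x)|\ge p_{min}$ on $\bar{\cal D}\supseteq{\cal D}$ yields
\[
|p_{comp}(x,\delta) - p(x)|\;<\;\eta\,p_{min}\;\le\;\eta\,|p(x)|
\]
for every $x\in{\cal D}$ and every $|\delta_j|\le\epsilon$, which is precisely the required accuracy property with machine precision $\epsilon$.

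The main obstacle is the branching case in the first step: a branch could in principle be reachable only for $\delta\ne 0$, and one must rule out that its branch polynomial $P_i$ takes a value at $(x,0)$ far from $p(x)$ for the relevant $x$. The resolution is the closure argument sketched above -- any branch reachable for arbitrarily small $\delta$ must have its closure meet the slice $\{\delta=0\}$ within $\bar{\cal D}$, and on that intersection the hypothesis $p_{comp}(x,0)=p(x)$, together with continuity of the branch polynomial $P_i$, forces $P_i(x,0)=p(x)$. Once this continuity issue is in place, the rest of the proof is the routine compactness / uniform-continuity step carried out above.
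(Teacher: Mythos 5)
Your proof takes essentially the same route as the paper's: both rest on compactness of $\bar{\cal D}$ to get uniform-in-$x$ control of the $\delta$-dependence, then divide by the uniform lower bound $p_{min}$. The paper phrases the compactness step as expanding $p_{comp}(x,\delta)=p(x)+\sum_{\alpha>0}p_\alpha(x)\delta^\alpha$ along a branch and bounding the coefficient polynomials $p_\alpha$ on $\bar{\cal D}$; you instead invoke uniform continuity of $p_{comp}$ on $\bar{\cal D}\times[-1,1]^k$. For straight-line programs these are equivalent, and your derivation of the final inequality from $\eta':=\eta\,p_{min}$ is correct.

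Where your write-up differs in substance is in the branching case, which you are right to flag explicitly: the paper just asserts the expansion ``along any branch'' has constant term $p(x)$ without addressing branches that may be visited only for $\delta\ne 0$. However, your proposed fix is not airtight. The fact that $\overline{R_i}$ meets $\bar{\cal D}\times\{0\}$ at some point $(x_0,0)$, together with $p_{comp}(x_0,0)=p(x_0)$, does not yield $P_i(x_0,0)=p(x_0)$: the hypothesis constrains the branch polynomial $P_j$ of whichever branch $j$ the algorithm actually executes at $(x_0,0)$, and $j$ may be different from $i$. Since $p_{comp}$ as a whole is only piecewise polynomial (not continuous across branch boundaries), continuity of $P_i$ alone does not transfer the equality from $P_j$ to $P_i$. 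A genuine resolution needs an additional hypothesis on the algorithm — for instance that branch tests are between quantities whose exact ($\delta=0$) values are never equal on $\bar{\cal D}$, or that adjacent branch polynomials agree at $\delta=0$ on the closure of their common boundary. In fairness, the paper's one-line treatment implicitly assumes the same thing; you should simply acknowledge the assumption rather than claim the closure argument closes the gap.
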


\begin{proof}
Since the relative error on $\cal D$ is
$$|p_{comp}(x,\delta) - p(x)|/|p(x)| \leq |p_{comp}(x,\delta) - p(x)|/p_{min}~,$$
it suffices to show that the right hand side numerator approaches 0 uniformly
as $\delta \rightarrow 0$. This follows by writing the value of
$p_{comp}(x,\delta)$ along any branch of the algorithm as
$$p_{comp}(x,\delta) = p(x) + \sum_{\alpha > 0} p_{\alpha}(x) \delta^{\alpha}~,$$
where $\alpha > 0$ is a multi-index with at least one component exceeding 0.
By compactness of $\bar{\cal D}$, all $p_{\alpha}$ are bounded on $\bar{\cal D}$, and thus there exists some constant $C>0 $ such that
$$|\sum_{\alpha > 0} p_{\alpha}(x) \delta^{\alpha} | \leq C
\sum_{\alpha > 0} |\delta|^{\alpha}~.$$ The right hand side goes to 0 uniformly as the upper bound $\epsilon$ on
each $|\delta_i|$ goes to zero.
\end{proof}

What about domains that are not compact, e.g., not bounded? The proof above points to some of the issues that may occur: ratios $p_{\alpha}(x)/p(x)$ could become unbounded, even though $p_{min}>0$. Another way to see that requiring $p_{min}>0$ is not enough is to consider the polynomial $$p(x)= 1+ (x_1+x_2+x_3)^2~.$$ To evaluate this polynomial accurately, intuitively, one needs to evaluate $(x_1+x_2+x_3)^2$ accurately, once it is sufficiently large. If one uses only addition, subtraction, and multiplication, this is not possible. (These considerations will be made explicit in Section \ref{suf_r}.)

There are, however, cases in which unboundedness is not an impediment. Consider the case of a homogeneous polynomial $p(x)$, to be evaluated on a homogeneous domain $\mathcal{D}$ (i.e., a domain with the property that $x \in \mathcal{D}$ implies $\gamma x \in \mathcal{D}$, for any scalar $\gamma$). Due to the homogeneity of $p$, we can then restrict our analysis to $\mathcal{D} \cap S^{n-1}$ (the unit ball in $\mathbb{R}^n$), or $\mathcal{D} \cap S^{2n-1}$ (the unit ball in $\mathbb{C}^n$). On such domains we can use a compactness argument, as we did before, to obtain:

\begin{theorem}
\label{thm_positive_homo}
Let $p(x)$ be a homogeneous polynomial, let $\cal D$
be a homogeneous domain, and let $S$ denote the unit
ball in $\Rr^n$ (or $\C^n$).
Let
\[
p_{min,homo} \equiv \inf_{x \in \bar{\cal D} \cap S} |p(x)| \;
\]
Then $p(x)$ can be evaluated accurately if $p_{min,homo} > 0$.
\end{theorem}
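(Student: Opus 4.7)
The plan is to mimic the proof of Theorem \ref{thm_positive_compact}, using homogeneity to reduce the unbounded domain $\cal D$ to the compact piece $\bar{\cal D} \cap S$, where the previous theorem already delivers uniform bounds. Concretely, I would construct $p_{comp}(x,\delta)$ so that it remains \emph{homogeneous in $x$ of the same degree $d$ as $p$}, jointly with any choice of $\delta$. This way, in the expansion
\[
p_{comp}(x,\delta) \;=\; p(x) \;+\; \sum_{\alpha>0} p_\alpha(x)\,\delta^\alpha,
\]
each $p_\alpha(x)$ is itself homogeneous of degree $d$, so bounds proved on $S$ transfer to all of $\cal D$ by scaling.

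To build such a $p_{comp}$, I would evaluate $p$ monomial-by-monomial: each monomial $m_i(x)$ is a product of the input variables (each of $x$-degree $1$), so with rounding it becomes $m_i(x)\cdot H_i(\delta)$ for some polynomial $H_i$ with $H_i(0)=1$, still homogeneous in $x$ of degree $d$; summation of these (in any order, with rounded adds) adds further $(1+\delta)$ factors but does not disturb the common $x$-degree, since all summands already have degree $d$. Inductively, every intermediate value in the algorithm is a polynomial in $(x,\delta)$ that is homogeneous in $x$, and the final $p_{comp}(x,\delta)$ is homogeneous of degree $d$ in $x$; hence each coefficient $p_\alpha(x)$ is homogeneous of degree $d$.

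Now I would conclude as in Theorem \ref{thm_positive_compact}. Since $\bar{\cal D}\cap S$ is compact and each $p_\alpha$ is continuous, there are constants $C_\alpha$ with $|p_\alpha(y)|\le C_\alpha$ for all $y\in \bar{\cal D}\cap S$. For any nonzero $x\in {\cal D}$, homogeneity of $\cal D$ gives $x/\|x\| \in {\cal D}\cap S \subseteq \bar{\cal D}\cap S$, so by degree-$d$ homogeneity
\[
|p_\alpha(x)| \;\le\; C_\alpha \|x\|^d
\qquad \text{and} \qquad
|p(x)| \;\ge\; p_{min,homo}\,\|x\|^d > 0 .
\]
Dividing, the $\|x\|^d$ cancels and the relative error is bounded uniformly in $x$ by
\[
\frac{|p_{comp}(x,\delta)-p(x)|}{|p(x)|} \;\le\; \frac{1}{p_{min,homo}}\sum_{\alpha>0} C_\alpha |\delta|^\alpha,
\]
which tends to $0$ as $\max_i|\delta_i|\le \varepsilon\to 0$; if $0\in{\cal D}$, both $p(0)$ and $p_{comp}(0,\delta)$ vanish by homogeneity and contribute nothing. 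Given $\eta$, choose $\varepsilon$ so that the right-hand side is at most $\eta$.

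The main obstacle is the homogeneity of the $p_\alpha$: it is \emph{not} automatic for an arbitrary algorithm computing $p$, since such an algorithm might add subexpressions of differing $x$-degrees whose non-top-degree parts happen to cancel, and a rounding error would then expose those low-degree pieces in some $p_\alpha$. Enforcing that every addition in the algorithm combines operands of matching $x$-degree (as in the monomial-by-monomial construction above) is the key discipline that makes the scaling argument go through. If branching is used, each branch is handled separately and the finitely many branch-specific constants $C_\alpha$ are taken to their maximum.
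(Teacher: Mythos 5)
Your proposal is correct and pursues the same strategy the paper alludes to but does not write out (the paper simply cites \cite{focm} for a ``Horner-like scheme''): the essential idea is to exhibit a particular $p_{comp}$ whose error coefficients $p_\alpha$ are \emph{all} homogeneous of degree $d$ in $x$, so that the compactness argument of Theorem~\ref{thm_positive_compact} applied on $\bar{\cal D}\cap S$ lifts to all of $\cal D$ once the common factor $\|x\|^d$ is scaled out of both $|p_\alpha(x)|$ and $|p(x)|$. The concrete evaluation scheme differs — you build each degree-$d$ monomial by repeated multiplication and then sum, whereas the paper refers to a Horner-style nested scheme — but both accomplish the same thing, namely that every rounded addition combines two operands of equal $x$-degree, which is precisely what guarantees each $p_\alpha$ is homogeneous of degree $d$. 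Since the theorem is an existence statement, the choice of scheme is immaterial; yours makes the homogeneity of every intermediate quantity somewhat more transparent at the cost of more operations, while a Horner-style scheme is more economical but needs one extra observation (that in Horner's recursion both operands of each addition already share a common degree). You also correctly flag and close the one genuine pitfall: an arbitrary algorithm for $p$ (as allowed in Theorem~\ref{thm_positive_compact}) may mix subexpressions of unequal $x$-degree whose low-order parts cancel exactly, so its $p_\alpha$ need not be homogeneous and the scaling step would break; the explicit homogeneity-preserving construction is what makes the reduction to the compact set legitimate.
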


A simple, Horner-like scheme that provides an accurate 
$p_{comp}(x,\delta)$ in this case is given in \cite{focm}, along with a proof.

\subsection{Traditional arithmetic} \label{sec_traditional}

In this section we consider the basic or traditional arithmetic over the real or complex fields, with the three basic operations $\{+, -, \times\}$, to which we add negation. The model of arithmetic is governed by the laws in Section \ref{sec_models}, and has also been described in Section \ref{sec_Plamen}. We remind the reader that this arithmetic model \emph{does not allow} the use of constants.

Section \ref{class_nec} describes the necessary condition for accurate evaluability over both real and complex domains. Sections \ref{suf_c}, respectively \ref{suf_r} deal with sufficient conditions for accurate evaluability over $\mathbb{C}^{n}$, respectively $\mathbb{R}^n$. We show that the necessary and sufficient conditions for accurate evaluation coincide in the complex case, in Section \ref{suf_c}. Section \ref{suf_r} also describes progress toward understanding how to construct a decision procedure in the real case.

Throughout this section, we will make use of the following definition of allowability.

\begin{definition} Let $p$ be a polynomial over $\mathbb{R}^n$ or $\mathbb{C}^n$, with variety  $V(p)\eqbd \{ x~:~p(x)=0\}$. 
We call $V(p)$ \emph{allowable} if it can be represented as a union of intersections of hyperplanes of the form
\begin{eqnarray} \label{unu}
& 1.&  Z_i =\{x~:~ x_i ~=~ 0\}~, \\
\label{doi}
& 2.&  S_{ij} = \{x~:~ x_i+x_j ~=~ 0\}~, \\
\label{trei}
& 3.&  D_{ij} = \{x~:~ x_i - x_j ~=~ 0\}~.
\end{eqnarray}
If $V(p)$ is not allowable, we call it unallowable.
\end{definition}

The word ``allowable'' in the definition above is used because, as we will see, polynomials with ``unallowable'' varieties do not allow for the existence of accurate evaluation algorithms.

For a polynomial $p$, having an allowable variety $V(p)$ is obviously a Tarski-decidable property (following \cite{tarski_book}), since the number of unions of intersections of hyperplanes \eqref{unu}-\eqref{trei} is finite.

\subsubsection{Necessity: real and complex} \label{class_nec}

All the statements, proofs, and proof sketches in this section work equally well for both the real and the complex case, and thus we will treat them together.

Throughout this section we will denote the variable space by $\S \in \{\Rr^n, \C^n\}$.

To state and explain the main result of this section, we need to introduce some additional notions and notation.

\begin{definition} \label{general_p} Given a polynomial $p$ over $\S$ with unallowable variety $V(p)$, consider 
all sets $W$ that are finite intersections of allowable hyperplanes defined by \eqref{unu}, 
\eqref{doi}, \eqref{trei}, and subtract from $V(p)$ all those $W$ for which $W \subset V(p)$. 
We call the remaining subset of the variety {\em points in general position\/} and denote it by $G(p)$.
\end{definition}

If $V(p)$ is not allowable, then from Definition \ref{general_p} it follows that $G(p)\neq \emptyset$. %One may also think of points in $G(p)$ as ``unallowable'' or ``problematic'', because, as we will see, we necessarily get large relative errors in their vicinity.

\begin{definition} \label{allowance} 
Given $x \in \S$, define the set $\allow(x)$ as the intersection of
all allowable planes going through $x$:
$$ \allow(x)\eqbd \left( \cap_{x\in Z_i} Z_i\right)  \cap \left( \cap_{x\in S_{ij}} S_{ij}\right) 
 \cap \left( \cap_{x\in D_{ij}} D_{ij} \right) , $$
with the understanding that 
$$ \allow(x)\eqbd \mathcal{S} \qquad {\rm whenever} \qquad x\notin Z_i, \; S_{ij}, \; D_{ij}
\quad \hbox{\rm for all} \quad i,j.$$
Note that $\allow(x)$ is a linear subspace of $S$.
\end{definition}

In general, we are interested in the sets $\allow(x)$ primarily when $x\in G(p)$. For each such $x$, $ \allow(x)\not\subseteq V(p)$, 
which follows directly from the definition of $G(p)$.

We can now state the main result of this section, which is a necessity condition for the evaluability of polynomials over domains. In the following, we denote by $\overline{\Int({\mathcal{D})}}$ the closure of the interior of the domain $\mathcal{D}$.

\begin{theorem} \label{conj}
Let $p$ be a polynomial over a domain $\mathcal{D} \in \S$, such that $\mathcal{D} = \overline{\Int({\mathcal{D})}}$. Let $G(p)$ be the set of points in general position on the variety $V(p)$. If $\Int(\mathcal{D}) \cap G(p) \neq \emptyset$, then $p$ is not accurately evaluable on $\mathcal{D}$.
\end{theorem}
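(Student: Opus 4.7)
The plan is to argue by contradiction via first-order error analysis. Fix $x^* \in \Int(\mathcal{D}) \cap G(p)$ (such a point exists by hypothesis) and suppose an accurate algorithm exists. Using the determinism reduction of Section~\ref{sec_models} and continuity of the finitely many comparisons, shrink attention to an open neighborhood $U \subset \Int(\mathcal{D})$ of $x^*$ on which the algorithm always follows the same branch, so that the output $p_{comp}(x,\delta)$ is a polynomial in $x$ and the rounding errors $\delta = (\delta_1,\dots,\delta_k)$ satisfying $p_{comp}(x,0) = p(x)$.

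The first substantive step is to expand
\[
p_{comp}(x,\delta) \;=\; p(x) + \sum_{i=1}^{k} r_i(x)\,\delta_i + O(\|\delta\|^2)
\]
and invoke the standard propagation rules in the traditional model to obtain the factorization $r_i(x) = q_i(x)\,R_i(x)$, where $q_i$ is the exact value of the $i$-th intermediate computed quantity and $R_i$ is the product of the ``other operands'' at each multiplication on the data-flow path from step $i$ to the output (so $R_i \equiv 1$ when that path crosses only $+$ and $-$ steps, and is otherwise a product of certain intermediate polynomials $q_\ell$). Critically, every $q_i$ and every $q_\ell$ is a polynomial in $x_1,\dots,x_n$ with integer coefficients and no constant term, since the traditional model admits no constants.

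Next I extract a vanishing condition on $V(p)$. Setting $\delta_i = \epsilon$ and the remaining $\delta_j = 0$, the accuracy hypothesis forces $|r_i(x)|/|p(x)| \le C$ uniformly on $U$, so by continuity $r_i$ vanishes on $V(p) \cap U$. Passing to the complexification (immediate over $\C$; over $\Rr$ one uses real-analytic continuation along the smooth locus of $V(p)$ through $x^*$, justified by $x^* \in G(p)$), this upgrades to vanishing of $r_i$ on every irreducible component $C$ of the complexified variety meeting $U$. Irreducibility of $C$ combined with $r_i = q_i \cdot \prod_\ell q_\ell$ then forces one of these factors to vanish identically on $C$; thus each step of the algorithm is associated with an intermediate polynomial vanishing on $C$.

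The heart of the argument, and the step I expect to be the principal obstacle, is to translate this bookkeeping of vanishing intermediates into the conclusion that $C$ is one of the allowable hyperplanes $Z_i$, $S_{ij}$, or $D_{ij}$; once this is achieved, $x^* \in C \subset V(p)$ exhibits $x^*$ as a member of an intersection of allowable hyperplanes contained in $V(p)$, contradicting $x^* \in G(p)$. My plan here is induction on the length of the straight-line program. The base case $p = x_i$ gives $C = Z_i$, and a multiplicative final operation $p = q_a \cdot q_b$ splits $V(p) = V(q_a) \cup V(q_b)$ and shifts the claim to the two shorter subprograms. The genuinely delicate case is an additive last operation $p = q_a \pm q_b$: here $C$ lies in $\{q_a = \mp q_b\}$, and one must combine the on-$C$ identity $q_a = \mp q_b$ with the accumulated vanishing conditions on the intermediate polynomials, together with a codimension count that crucially exploits the absence of constants, to force $q_a|_C$ and $q_b|_C$ to each reduce to a single variable up to sign, pinning $C$ down to an allowable type. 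The remainder of the proof is formal first-order floating-point error analysis plus standard algebraic geometry; the real subtlety lives in this last inductive case.
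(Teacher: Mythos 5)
Your approach is genuinely different from the paper's proof, and it contains two real gaps.

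\textbf{Gap 1: the factorization $r_i = q_i \prod_\ell q_\ell$ is false for a general DAG.} The first-order coefficient $r_i$ equals $q_i \cdot \partial p_{comp}/\partial q_i|_{\delta=0}$, and when the $i$-th intermediate is consumed by more than one downstream node (fan-out $\ge 2$ with reconvergent paths), this derivative is a \emph{sum} over paths, not a product. For instance, with $q_1 = x_1 x_2$, $q_2 = q_1 + x_3$, $q_3 = q_1 \cdot x_4$, output $q_4 = q_2 + q_3$, one gets $\partial q_4/\partial q_1 = 1 + x_4$, which is neither a product of intermediates nor free of a constant term. This simultaneously breaks the ``irreducibility $\Rightarrow$ one factor vanishes on $C$'' step (a sum of products can vanish on $C$ without any term doing so) and undermines the ``no constant term'' property on which your codimension count relies.

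\textbf{Gap 2: the inductive step is not closed, and it is doubtful it can be.} You acknowledge the additive case as ``the principal obstacle,'' but the problem is worse than delicate bookkeeping: the vanishing conditions you extract are only that, for each $i$, \emph{some} intermediate (or derivative factor) vanishes on $C$, while intermediates can be arbitrary constant-term-free polynomials, whose zero sets need not be allowable. Nothing in your setup prevents an algorithm from having an intermediate $q_i$ with an unallowable variety on which everything relevant vanishes. You would need an argument quantifying over all straight-line programs, not a local analysis of one; the induction variable (program length) does not obviously shrink the problem because the recursive sub-claims concern \emph{different} polynomials $q_a$, $q_b$ evaluated on a variety tied to $p$, not to them.

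For comparison, the paper's proof avoids both issues. Instead of linearizing in $\delta$, it uses the stronger fact that $p_{comp}(x,\delta)$ must vanish \emph{identically in $\delta$} at a fixed $x \in G(p)$, then classifies nodes as ``trivial'' (identically $0$ in $\delta$ at this $x$) or not, picks a positive-measure set $\Delta$ of $\delta$s on which every non-trivial node is nonzero, and backward-traces the exact zeros through the DAG. The crucial observation is combinatorial: the only ways a ``first'' zero can originate in a multiply/add DAG with no constants are a zero source ($x_i = 0$), a self-cancellation ($q - q$, which is input-independent), or a cancellation of two \emph{source} inputs ($x_i \pm x_j = 0$)—i.e.\ exactly the three allowable conditions. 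Those zeros then persist under replacing $x$ by any $y \in \allow(x)$, so the algorithm outputs $0$ at points $y \notin V(p)$, contradicting accuracy. This sidesteps complexification, irreducibility arguments, and any induction on program length. Your first-order/algebraic-geometry route discards information the paper's argument needs, and it does not close without a substantially new idea in the additive case.
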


With a little more work one can see that ``failures'' are not rare. More precisely, in the same circumstances as above, any algorithm attempting to compute $p$ accurately on $\mathcal{D}$ will fail to do so consistently on a set of positive measure.

\begin{corollary} \label{in_sfirsit} Let $p$ and $\mathcal{D}$ as before, $x \in \Int(\mathcal{D}) \cap G(p)$, $\epsilon>0, ~1>\eta>0$, and $p_{comp}(\cdot, \delta)$ be the result of an algorithm attempting to compute $p$ on $\mathcal{D}$ with error vector $\delta$. Then there exists a set $\Delta_x$ {\sl arbitrarily close} to $x$ and a set $\Delta_{\delta}$ of positive measure in $H_{\epsilon} := \{\delta~:~ |\delta_i| \leq \epsilon \}$  such that $|p_{comp} - p|/|p| >\eta$ 
 when computed at any point $y \in \Delta_x$ using any vector of relative errors $\delta \in \Delta_{\delta}$.
\end{corollary}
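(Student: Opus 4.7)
The plan is to upgrade the single-instance failure established in Theorem~\ref{conj} to a failure on a product of sets of positive measure, using the piecewise-polynomial structure of $p_{comp}$, continuity, and the fact that the variety $V(p)$ and the branching skeleton of the algorithm both have Lebesgue measure zero.

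First, I would localize to a single branch of the algorithm. Since $p_{comp}(\cdot,\cdot)$ is built from $+,-,\times$ together with comparisons, it is piecewise polynomial in $(y,\delta)$ with semialgebraic branching conditions; the $(y,\delta)$-space decomposes into finitely many full-dimensional open cells on which $p_{comp}$ coincides with a single polynomial, plus a lower-dimensional skeleton. Every neighborhood of $(x,0)$ meets the interior of at least one such cell $C$, and on $C$ the function
\[
r(y,\delta) \;:=\; |p_{comp}(y,\delta)-p(y)| - \eta\,|p(y)|
\]
is continuous, with $r(y,0)=-\eta\,|p(y)|\le 0$.

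Next I would apply (the construction in) Theorem~\ref{conj} to produce a specific pair $(y_0,\delta_0)$ inside the interior of $C$ with $y_0$ arbitrarily close to $x$, $|\delta_{0,i}|\le\epsilon$, and $r(y_0,\delta_0)>0$. The hypothesis $x\in G(p)$ guarantees a direction $v\in\allow(x)\setminus V(p)$ along which $p$ restricts to a nonzero univariate polynomial, so sliding $y_0$ slightly along $v$ makes $|p(y_0)|$ small but nonzero, while the proof of Theorem~\ref{conj} shows that the relative error $|p_{comp}-p|/|p|$ can be made arbitrarily large by a suitable choice of $\delta_0$. In particular, one can arrange $r(y_0,\delta_0)>0$ for the prescribed $\eta<1$.

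By continuity of $r$ on $C$, the set $\{r>0\}\cap C$ is open in $(y,\delta)$-space and contains $(y_0,\delta_0)$. Every open set in $\Rr^n\times\Rr^k$ contains a product open box; I would choose such a box $\Delta_x\times\Delta_\delta$ inside $\{r>0\}\cap C$, with $\Delta_x$ contained in any prescribed neighborhood of $x$ and $\Delta_\delta\subset H_\epsilon$. Both $\Delta_x$ and $\Delta_\delta$ are open, hence of positive Lebesgue measure, and by construction $|p_{comp}(y,\delta)-p(y)|>\eta\,|p(y)|$ for every $(y,\delta)\in\Delta_x\times\Delta_\delta$. The main obstacle I expect is ensuring that the failure point supplied by Theorem~\ref{conj} actually lies strictly in the interior of a branching cell and away from $V(p)$, so that continuity delivers a genuine open neighborhood where $|p(y)|>0$ and the strict inequality is preserved. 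This is handled by observing that both the branching skeleton of $p_{comp}$ and the variety $V(p)$ are proper semialgebraic sets of Lebesgue measure zero in their ambient spaces, so a generic small perturbation of $(y_0,\delta_0)$ keeps us inside a single cell and off the variety while preserving every strict inequality already established.
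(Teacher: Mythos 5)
The paper does not spell out a proof of this corollary---it asserts it follows ``with a little more work'' from the sketch of Theorem~\ref{conj}---but the intended route is easy to reconstruct and differs from yours. One simply takes $\Delta_\delta$ to be the positive-measure set $\Delta$ of error vectors already exhibited in the Theorem~\ref{conj} sketch (those keeping every non-trivial node nonzero), and takes $\Delta_x$ to be any subset of $\allow(x)\setminus V(p)$ near $x$: for every such pair $(y,\delta)$ the zero-propagation argument forces $p_{comp}(y,\delta)=0$ while $p(y)\ne0$, so the relative error is exactly $1>\eta$. No continuity argument is needed, and $\Delta_x$ may have measure zero, which the corollary's wording explicitly permits (it asks only that $\Delta_x$ be arbitrarily close to $x$). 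Your approach instead thickens a single failure point by continuity, which would yield the strictly stronger conclusion that $\Delta_x$ has positive ambient measure.

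Two points in your write-up need repair. First, your claim that Theorem~\ref{conj} ``shows that the relative error can be made arbitrarily large by a suitable choice of $\delta_0$'' misdescribes the failure mode actually produced: at the points $y_0\in\allow(x)\setminus V(p)$ where $p_{comp}(y_0,\delta_0)=0$, the relative error equals exactly $1$. That still exceeds $\eta$, so your inequality $r(y_0,\delta_0)>0$ holds, but the stated justification is inaccurate. Second, and more seriously, the ``generic small perturbation'' dodge does not get you into a cell interior while preserving $r>0$: the failure points all lie on $\allow(x)\times\Delta$, a measure-zero affine slice, and a generic perturbation of $y_0$ off $\allow(x)$ destroys the identity $p_{comp}(y_0,\delta_0)=0$ on which $r(y_0,\delta_0)>0$ rests, so there is no strict inequality to ``preserve.'' What actually rescues the argument is that $p_{comp}$ agrees with a single polynomial on the \emph{closure} of whichever branching cell's tie-breaking matches the algorithm's computation at $(y_0,\delta_0)$; that polynomial is continuous, so $r>0$ persists on an open neighborhood of $(y_0,\delta_0)$ inside that cell, and from there your open-box extraction goes through. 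With that repair the proof is correct and, as a bonus, establishes positive measure for $\Delta_x$ as well.
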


\vspace{.25cm}

For the benefit of the reader we give here a sketch of the proof of Theorem \ref{conj} in an informal style. Details and rigorous statements can be found in \cite{focm}.

\vspace{.25cm}

\begin{proof}{Theorem \ref{conj}} The essential idea is to consider under what kind of circumstances can an algorithm in which every non-trivial operation introduces errors actually produce a perfect $0$. Note that, by definition, for an algorithm to be accurate, it must compute $p(x)$ \emph{exactly} when $x \in V(p)$, and it cannot output $0$ for any $x \notin V(p)$.

For starters, think of the algorithm as in~\cite{AHU}--as a directed acyclic graph (DAG) with input, computational, branching, and output nodes. Every computational node has two inputs (which may both come from a single other computational node). All computational nodes are labeled by $(op(\cdot), \delta_i)$ with $op(\cdot)$ representing the operation that takes place at that node. It means that at each node, the algorithm takes in two inputs, executes the operation, and multiplies the result by $(1+\delta_i)$. Finally, for every branch of the algorithm, there is a single destination node, with one input and no output, whose input value is the result of the algorithm.

For simplicity, in this sketch we only consider non-branching algorithms.

Assume that $x \in G(p)$ is fixed, and let us examine the algorithm as a function of the error variables $\delta$. Some computational nodes in this DAG might do ``trivial'' work (work that, given the input $x$, outputs $0$ for all choices of variables $\delta$). For example, such a node might receive input from a single computational node, subtract it from itself, and thus output $0$. Note that multiplication nodes cannot produce a $0$ unless they receive a $0$ as an input.

For all non-trivial computation nodes, the output result is a polynomial of $\delta$ (and thus it will only vanish on a set of $\delta$s of measure $0$). 

As such, for any $x \in G(p)$, there will be a positive measure set $\Delta$ of $\delta$s for which non-trivial nodes will not output $0$. Let us now choose some $\delta$ in this set and then look at the computational output node. Since we assume that the algorithm is accurate, the output node must be $0$, therefore the output node must be of ``trivial'' type. Let us track back zeros in the computation, marking the nodes where such zeros appear and propagate from. In other words, backward-reconstruct paths of zeros that lead to the output of the computation. 

Zeros propagate forward by multiplication, or by the addition/subtraction of identical quantities; but how do the \emph{first} zeros on such paths (from the perspective of the computation) get created? A quick analysis shows that there are only three possibilities: either they are sources (zero as an input), or come from nodes corresponding to the trivial operation of subtracting an input from itself ($q(\delta)-q(\delta)$, since the node that computed this input must have been non-trivial), or they correspond to the addition or subtraction of two equal source inputs ($x_i = x_j$ or $x_i = - x_j$). 

We illustrate these possibilities in Figure~\ref{figur2} below. The white nodes are ``trivial'' nodes, labeled with the operation executed there and the error variable; for clarity, we dropped the indices on the variables $\delta_i$, and chosen not to represent certain parts of the graph. The gray nodes are  non-trivial nodes. Arrows are labeled with the value they carry. Rectangles represent source nodes, and the triangle is the final output node.

\begin{figure}
\begin{center}
\epsfig{figure=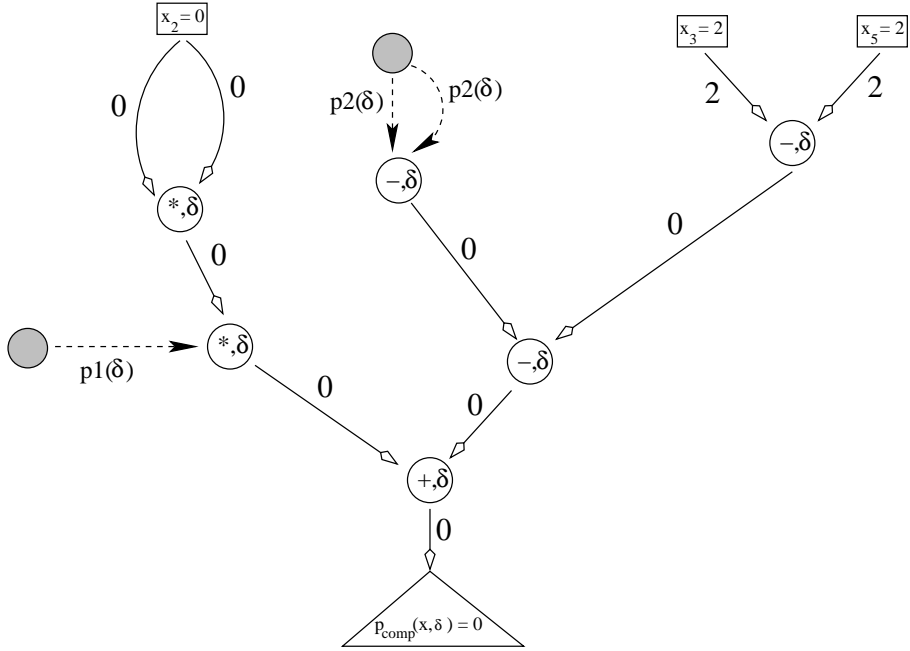, width=12cm}
\caption{The three ways to produce zeros.}
\label{figur2}
\end{center}
\end{figure}
%\begin{tabular}{ll}
%\multicolumn{2}{l}{\hspace{-1cm} \epsfxsize=13.0cm \epsfbox{zero_p.eps}} \\
%\hspace{2cm} Figure 2. & The three ways to produce zeros. \\ \\
%\end{tabular}
%\vspace{0.2cm}

%\begin{figure}
%\epsfig{figure=zero_p, width=8in}
%\caption{The three ways to produce zeros. The white nodes are ``trivial'' nodes, labeled with the operation executed there and the error variable; for clarity, we dropped the indices on the variables $\delta_i$ and parts of the graph. The grey nodes are non-trivial nodes. Arrows are labeled with the value they carry. Rectangles represent source nodes, and the triangle is the final output node.} \label{zeros}
%\end{figure}

The key observation is that \emph{all of these zeros would be preserved if we replaced $x$ with any $y \in \allow(x)$}. In other words, if the algorithm outputs $p_{comp}(x, \delta) = 0$, for some $\delta \in \Delta$, then it will also output $p_{comp}(y, \delta) = 0$, for \emph{all} $\delta \in \Delta$, and \emph{all} $y \in \allow(x)$. 

For example, assume that the polynomial in Figure \ref{figur2} is $$p(x) = (x_1+x_4+x_6)^2 +x_2^4 +(x_3-x_5)^2~,$$ with unallowable variety $V(p) = \{x_1+x_4+x_6=0\} \cap \{x_2 = 0 \} \cap \{x_3=x_5\}$, and that we want to compute $p$ at $x = (1, 0, 2, 3, 2, -4) \in G(p)$. Then the result of the computation would be correct: $p_{comp}(x, \delta) = 0$. However, this algorithm would also output $p_{comp}(y, \delta) = 0$ for the point $y = (1, 0, 2, 3, 2, 4)$, which is in $\allow(x) = \{x_2 = 0 \} \cap \{x_3=x_5\}$, but not in $V(p)$, since $p(y) = 16$.

Since $x \in G(p)$, $\allow(x) \notin V(p)$, and thus the algorithm obtains $0$ on points not in the variety, hence it fails. 
\end{proof}

\subsubsection{Sufficiency: the complex case} \label{suf_c}

Suppose we now restrict input values to be complex numbers and use the same 
algorithm types and the notion of accurate evaluability from the previous 
sections. By Theorem \ref{conj}, for a polynomial $p$ of $n$ complex 
variables to be accurately evaluable over $\C^n$ it is necessary that 
its variety $V(p)\eqbd 
\{z\in \C^n : p(z)=0 \}$ be allowable.

We give and explain here a result that shows that this condition is also sufficient. This characterization is possible in the complex polynomial case because complex varieties are (pun intended) much simpler than real ones. In particular, Theorem \ref{dimensions} has no correspondent for real varieties, and therefore we cannot prove anything close to Theorem \ref{sufficiency_c} for the real polynomial case.

\begin{theorem} \label{sufficiency_c}
Let $p: \C^n \to \C$ be a polynomial with integer
coefficients and zero constant term. Then $p$ is accurately
evaluable on $\mathcal{D} = \mathbb{C}^n$ if and only if the variety $V(p)$ is allowable.
\end{theorem}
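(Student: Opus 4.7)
The plan is to handle both directions of the biconditional. The ``only if'' direction is immediate from Theorem~\ref{conj}: over $\mathcal{D} = \C^n$ we have $\Int(\mathcal{D}) = \C^n = \overline{\Int(\mathcal{D})}$, so if $V(p)$ were not allowable, then $G(p)$ would be a nonempty subset of $\Int(\mathcal{D}) \cap V(p)$, forcing $p$ not to be accurately evaluable. The real work is in the ``if'' direction.

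For the ``if'' direction, the strategy is to use allowability to factor $p$ explicitly, and then evaluate $p$ via that factorization using only operations that manifestly avoid inaccurate cancellation. Assuming $p \not\equiv 0$, the variety $V(p)$ is a hypersurface in $\C^n$, so each of its irreducible components is $(n-1)$-dimensional. Under the allowability hypothesis, $V(p)$ is a union of intersections of hyperplanes of the forms $Z_i, S_{ij}, D_{ij}$; since any intersection of two or more of these has codimension at least two and therefore cannot be an irreducible component of a hypersurface, such intersections are absorbed into larger components, leaving $V(p) = \bigcup_\alpha H_\alpha$ where each $H_\alpha$ is itself one of $Z_i, S_{ij}, D_{ij}$, defined by a primitive integer linear form $L_\alpha \in \{x_i,\ x_i + x_j,\ x_i - x_j\}$. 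Hilbert's Nullstellensatz over the algebraically closed field $\C$, combined with the UFD property of $\C[x_1,\ldots,x_n]$ and the fact that the $L_\alpha$ are distinct irreducibles, then yields $\sqrt{(p)} = \bigcap_\alpha (L_\alpha) = \bigl(\prod_\alpha L_\alpha\bigr)$. Hence $p = c \prod_\alpha L_\alpha^{m_\alpha}$ for positive integers $m_\alpha$ and a scalar $c \in \C$. Gauss's lemma, applied to the integer polynomial $p$ and the primitive integer polynomials $L_\alpha$, forces $c \in \mathbb{Z}$.

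Given this factorization, the algorithm is straightforward. I would first evaluate each $L_\alpha$ from the exact inputs using at most one operation: trivially when $L_\alpha = x_i$, and otherwise $x_i + x_j$ or $x_i - x_j$, each yielding $L_\alpha (1+\delta)$ with $|\delta| \le \epsilon$ by the TM axiom for operations on exact operands. Next, multiply the $L_\alpha^{m_\alpha}$ together in any order; multiplications never induce cancellation, so relative errors accumulate additively to size $O((\deg p)\,\epsilon)$. Finally, multiply by the fixed integer $c$ by repeatedly adding the running product to itself $|c|-1$ times (with a single negation if $c<0$). Since the addends are close scalar multiples of a common complex target, no cancellation occurs, and the resulting relative error remains $O(|c|\,\epsilon)$. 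Summing the contributions gives an overall bound $O(N\epsilon)$ where $N$ depends only on $p$; choosing $\epsilon$ sufficiently small achieves the prescribed relative accuracy $\eta$.

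The main obstacle is the algebraic step --- translating the set-theoretic allowability of $V(p)$ into an integer-coefficient multiplicative factorization of $p$ itself. The codimension argument is needed to discard spurious lower-dimensional pieces of the allowable decomposition, the Nullstellensatz produces the factorization up to a complex scalar, and Gauss's lemma pins that scalar to an integer so that the integer-coefficient constant can be synthesized by repeated addition (the model not providing constants explicitly). Once this factorization is in hand, the algorithm and its accompanying TM error analysis are essentially automatic, and this is the point at which the complex setting becomes genuinely easier than the real one: over $\C$ the Nullstellensatz makes the variety-to-ideal dictionary faithful, which is exactly what lets the necessary condition of Theorem~\ref{conj} become sufficient as well.
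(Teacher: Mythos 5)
Your proposal is correct, and the overall skeleton — necessity from Theorem~\ref{conj}, reduction to a hyperplane union, factorization of $p$ into linear forms, then a cancellation-free algorithm multiplying those forms and synthesizing the integer constant by repeated addition — matches the paper's. The genuine difference is in how you obtain the factorization $p = c\prod_\alpha L_\alpha^{m_\alpha}$. The paper's Corollary~\ref{factors} proceeds by picking one hyperplane $H$ from the union, Taylor-expanding $p$ in the corresponding linear form to extract the maximal power, using Theorem~\ref{dimensions} to show $V(\widetilde p)\cap H = \emptyset$, and inducting on the number of hyperplanes. You instead go directly through the Nullstellensatz: $V(p)=\bigcup_\alpha V(L_\alpha)$ gives $\sqrt{(p)} = (\prod_\alpha L_\alpha)$ since the $L_\alpha$ are distinct irreducibles in the UFD $\C[x]$, forcing $p = c\prod_\alpha L_\alpha^{m_\alpha}$. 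Both approaches hinge on the same underlying fact — over $\C$, a hypersurface determines its defining polynomial up to radical, which fails over $\Rr$ — but yours packages it as a one-step appeal to the ideal--variety correspondence rather than an explicit recursion, which is cleaner though less self-contained given the paper's elementary framing. You also make the integrality of $c$ explicit via Gauss's lemma (primitivity of $\prod L_\alpha^{m_\alpha}$ plus $p\in\mathbb Z[x]$ forces $c\in\mathbb Z$), whereas the paper simply asserts it; your justification is the right one and worth stating. One small point to tighten: when discarding the codimension-$\geq 2$ pieces of the allowable decomposition, it helps to say explicitly that because $V(p)$ is a nonempty hypersurface, the Hauptidealsatz (or the paper's Theorem~\ref{dimensions}) makes every irreducible component have codimension exactly one, so any intersection of two or more allowable hyperplanes in the union is necessarily contained in one of the genuine hyperplane components and can be dropped; this is the content of Corollary~\ref{no_intrs}, which the paper proves by a local-dimension contradiction.
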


To prove this, we first investigate allowable complex varieties.
We start by recalling a basic fact about complex polynomial varieties (Theorem \ref{dimensions}), which can for example be deduced from Theorem 3.7.4 in \cite[page 53]{T}. 
Let $V$ denote any complex variety. To say that $\dim_\C(V)=k$
means that, for each $z\in V$ and each $\delta>0$, there exists 
$w\in V\cap B(z,\delta)$ such that $w$ has a $V$-neighborhood that
is homeomorphic to a real $2k$-dimensional ball.

\begin{theorem} \label{dimensions} Let $p$ be a non-constant polynomial over $\C^n$. 
Then $$\dim_\C (V(p))=n-1.$$
\end{theorem}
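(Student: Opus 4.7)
The plan is to establish $\dim_{\C}(V(p)) = n - 1$ by proving the two inequalities separately, via the holomorphic implicit function theorem rather than by appealing to the structure theorem cited from \cite{T}.

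For the upper bound $\dim_{\C}(V(p)) \leq n - 1$, I would observe that since $p$ is non-constant there is some $z_0 \in \C^n$ with $p(z_0) \neq 0$, so $V(p)$ is a proper subset of $\C^n$. By the identity principle for holomorphic functions, $V(p)$ cannot contain any non-empty open ball in $\C^n$; in particular no point of $V(p)$ has a $V(p)$-neighborhood homeomorphic to a real $2n$-dimensional ball, and consequently $\dim_{\C}(V(p)) \leq n-1$.

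For the lower bound $\dim_{\C}(V(p)) \geq n-1$, I would first factor $p = p_1^{e_1} \cdots p_k^{e_k}$ into irreducibles in $\C[z_1,\dots,z_n]$, so that $V(p) = \bigcup_i V(p_i)$; it then suffices to show $\dim_{\C} V(p_1) \geq n - 1$. After possibly renaming variables, $p_1$ has positive degree in $z_n$, and $\partial p_1 / \partial z_n$ is a nonzero polynomial of strictly smaller degree in $z_n$ than $p_1$. Because $p_1$ is irreducible, it cannot divide $\partial p_1 / \partial z_n$, and hence, by Hilbert's Nullstellensatz, there exists $z^* \in V(p_1)$ with $(\partial p_1/\partial z_n)(z^*) \neq 0$. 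The holomorphic implicit function theorem then exhibits $V(p_1)$ near $z^*$ as the graph $z_n = \varphi(z_1, \dots, z_{n-1})$ of a holomorphic function, a set biholomorphic to an open ball in $\C^{n-1}$ and hence homeomorphic to a real $2(n-1)$-dimensional ball.

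The main technical obstacle is the existence of this \emph{smooth point} on $V(p_1)$, i.e.\ a point at which some partial derivative of $p_1$ does not vanish. The non-divisibility argument via irreducibility plus the strict degree drop under differentiation secures it, and it crucially uses that $\C$ is algebraically closed (through the Nullstellensatz). This ingredient is unavailable over $\Rr$, which is precisely why the theorem has no real analogue and why complex varieties behave more rigidly in the sense exploited in the paper.
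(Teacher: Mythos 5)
The paper does not give its own proof of this theorem; it simply cites Theorem~3.7.4 of reference~\cite{T}. You have attempted a direct argument, which is a reasonable thing to do, and your upper bound via the identity principle is correct (together with the trivial observation that a subset of $\C^n$ cannot be locally homeomorphic to $\Rr^{2k}$ for $k > n$).

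There is, however, a genuine gap in the lower bound. The paper's definition of $\dim_\C(V)=k$ requires that \emph{for every} $z\in V$ and every $\delta>0$ there exist $w\in V\cap B(z,\delta)$ with a $V$-neighborhood homeomorphic to a real $2k$-ball --- that is, the points admitting such neighborhoods must be \emph{dense} in $V$. Your Nullstellensatz argument produces a single smooth point $z^*\in V(p_1)$; that is existence, not density. What is missing is the fact that the singular locus $V(p_1)\cap V(\partial p_1/\partial z_n)$, being a proper Zariski-closed subset of the irreducible variety $V(p_1)$, is nowhere dense in $V(p_1)$ in the Euclidean topology (and similarly for the pairwise intersections $V(p_i)\cap V(p_j)$), so that near any given $z_0\in V(p)$ one can locate $w$ lying on exactly one component and smooth there. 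That nowhere-density fact for proper subvarieties of irreducible varieties is standard but is roughly as deep as the result you are proving, so its omission is a real gap: as written, your argument shows only that \emph{some} point of $V(p)$ has a $2(n-1)$-ball neighborhood, which does not verify the stated definition. A related secondary slip is that the implicit function theorem at $z^*$ gives a $V(p_1)$-neighborhood, not a $V(p)$-neighborhood; if $z^*$ happened to lie on another component $V(p_j)$, the $V(p)$-neighborhood of $z^*$ would not be a ball of the right dimension. You would need to also arrange $z^*\notin\bigcup_{j\neq 1}V(p_j)$, which runs into the same density issue.
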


\begin{corollary} \label{no_intrs} 
Let $p: \C^n \to \C$ be a non-constant polynomial 
whose variety $V(p)$ is allowable. Then $V(p)$ is a union of allowable 
hyperplanes.  
\end{corollary}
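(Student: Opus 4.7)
The plan is to combine the dimension result from Theorem~\ref{dimensions} with the irreducible decomposition of $V(p)$, and then exploit the fact that allowable hyperplanes that actually meet $V(p)$ in full dimension must be entire irreducible components. First, I would fix the setup: by hypothesis $V(p) = \bigcup_{k} W_k$ where each $W_k$ is a (finite) intersection of hyperplanes drawn from the families $Z_i$, $S_{ij}$, $D_{ij}$ of Definition (allowability). Each such $W_k$ is an affine linear subspace; if $W_k$ is the intersection of $m \geq 2$ \emph{distinct} allowable hyperplanes, then $\dim_\C W_k \leq n-2$, while if $m = 1$ then $W_k$ itself is an allowable hyperplane.

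Next, I would invoke the standard irreducible decomposition. Factor $p = c\prod_i q_i^{e_i}$ into distinct irreducible factors over $\C$; then $V(p) = \bigcup_i V(q_i)$, and each $V(q_i)$ is an irreducible algebraic set. By Theorem~\ref{dimensions} applied to $q_i$, every $V(q_i)$ has complex dimension exactly $n-1$.

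Now the core step: fix any irreducible component $V(q_i)$. Since $V(q_i) \subseteq V(p) = \bigcup_k W_k$ is a finite union of closed sets, irreducibility forces $V(q_i) \subseteq W_k$ for some single $k$. Combining $\dim_\C V(q_i) = n-1$ with the dimension count above rules out the case $m \geq 2$, so this $W_k$ must be a single allowable hyperplane $H$. Both $V(q_i)$ and $H$ are irreducible of complex dimension $n-1$, and $V(q_i) \subseteq H$, so $V(q_i) = H$.

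Thus every irreducible component of $V(p)$ is one of the allowable hyperplanes, and $V(p) = \bigcup_i V(q_i)$ is a union of allowable hyperplanes, as required. The only potentially subtle point is justifying that an irreducible component of positive dimension cannot be ``spread across'' several of the $W_k$; this is handled cleanly by the fact that each $W_k$ is closed (being affine linear) together with the standard property that an irreducible closed set contained in a finite union of closed sets lies in one of them. I do not expect any real obstacle beyond making this last observation explicit.
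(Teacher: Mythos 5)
Your proof is correct, but it takes a genuinely different route from the paper's. The paper works topologically and locally: it fixes a minimal irredundant representation $V(p)=\bigcup_j S_j$, assumes some $S_{j_0}$ is a proper intersection of hyperplanes (so $\dim_\C S_{j_0}<n-1$), picks a point $z\in S_{j_0}\setminus\bigcup_{j\neq j_0} S_j$, and observes that in a small ball around $z$ the set $V(p)$ lives entirely inside $S_{j_0}$; this contradicts the local definition of $\dim_\C V(p)=n-1$ from Theorem~\ref{dimensions}, since no point nearby can have a $V(p)$-neighborhood homeomorphic to a $2(n-1)$-ball. You instead work algebraically and globally: factor $p$ into irreducibles $q_i$, use irreducibility of $V(q_i)$ (as a closed set inside a finite union of closed sets) to force $V(q_i)\subseteq W_k$ for a single $k$, and then use the dimension count plus the fact that two irreducible varieties of equal dimension with one containing the other must coincide. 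Both arguments hinge on Theorem~\ref{dimensions}; the paper's is more self-contained relative to the definitions it has set up, while yours is the more standard algebraic-geometry decomposition argument and has the side benefit of essentially already containing the factorization $p=c\prod_j \ell_j^{e_j}$ that the subsequent Corollary~\ref{factors} needs. One small point worth making explicit if you write this up: the final equality $V(q_i)=H$ uses that an irreducible closed subvariety of an irreducible variety of the same dimension must be the whole thing (or, equivalently, the Nullstellensatz argument that $q_i$ vanishing on $H$ forces the linear form defining $H$ to divide $q_i$); this is standard but is doing real work.
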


\begin{proof} Since $V(p)$ is allowable, let $V(p)=\cup_{j} S_j$ be the (minimal) way to write $V(p)$ as an irredundant union of irredundant intersections of hyperplanes. Assume that, for some $j_0$, $S_{j_0}$ is not a hyperplane but an (irredundant) intersection of hyperplanes.
Let $z\in {S_{j_0}\setminus \cup_{j\neq j_0} S_j}$. Then, for some
$\delta>0$, $B(z,\delta)\cap V(p)\subset S_{j_0}$. Since $\dim_C(S_{j_0})<n-1$,
no point in $B(z,\delta)\cap V(p)$ has a $V(p)$-neighborhood that is 
homeomorphic to a real $2(n-1)$-dimensional ball. Contradiction.
\end{proof}
 
\begin{corollary} \label{factors} If $p: \C^n \to \C$ is a 
polynomial whose variety $V(p)$ is allowable, then it is a product 
$p=c \prod_j p_j$, where each $p_j$ is a power of $x_i$, $(x_i-x_j)$, or $(x_i +x_j)$.
\end{corollary}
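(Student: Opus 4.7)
The plan is to combine Corollary~\ref{no_intrs} with the factorization of $p$ into irreducibles in the unique factorization domain $\C[x_1,\ldots,x_n]$, and then use Hilbert's Nullstellensatz together with Theorem~\ref{dimensions} to identify each irreducible factor of $p$ with one of the allowable linear forms $x_i$, $x_i + x_j$, $x_i - x_j$.

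First, by Corollary~\ref{no_intrs}, I would write $V(p) = \bigcup_k V(\ell_k)$ where each $\ell_k$ is one of the linear forms from \eqref{unu}--\eqref{trei}, and I may assume these forms are pairwise non-proportional. Next, factor $p$ into irreducibles in $\C[x_1,\ldots,x_n]$ as $p = c \prod_j q_j^{n_j}$, where the $q_j$ are pairwise non-associate irreducibles and $c \in \C^{\times}$. Then $V(p) = \bigcup_j V(q_j)$, and each $V(q_j)$ is an irreducible affine variety (since each ideal $(q_j)$ is prime in the UFD $\C[x_1,\ldots,x_n]$).

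The key step is then the following. Fix $j$; since $V(q_j) \subseteq V(p) = \bigcup_k V(\ell_k)$ and $V(q_j)$ is irreducible, there exists some $k$ with $V(q_j) \subseteq V(\ell_k)$. By Theorem~\ref{dimensions}, $\dim_{\C} V(q_j) = n-1 = \dim_{\C} V(\ell_k)$; since $V(\ell_k)$ is itself irreducible, this forces $V(q_j) = V(\ell_k)$. Applying Hilbert's Nullstellensatz over the algebraically closed field $\C$ gives $\sqrt{(q_j)} = \sqrt{(\ell_k)}$, and because both $q_j$ and $\ell_k$ are irreducible (hence their principal ideals are already radical and prime), we obtain $(q_j) = (\ell_k)$. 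Thus $q_j$ is a nonzero scalar multiple of $\ell_k$, so each irreducible factor of $p$ is, up to a scalar, one of the forms $x_i$, $x_i \pm x_j$. Collecting scalars into $c$, the desired factorization $p = c \prod_j p_j$ follows.

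I expect the main subtlety to be the clean invocation of the dimension argument: one needs that $V(q_j)$ cannot be a proper subvariety of the hyperplane $V(\ell_k)$, which is exactly what Theorem~\ref{dimensions} rules out (a strictly smaller irreducible subvariety would have dimension at most $n-2$). Everything else—unique factorization, primality of $(q_j)$, and the Nullstellensatz equivalence of radical ideals and varieties over $\C$—is standard commutative algebra and is used only in a single, clean way.
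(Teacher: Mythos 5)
Your proof is correct, but it takes a genuinely different route from the paper's. The paper proves the corollary constructively and inductively: it picks a hyperplane $H$ in the irredundant union given by Corollary~\ref{no_intrs}, Taylor-expands $p$ in the corresponding linear form $\ell$, extracts the maximal power $\ell^k$ so that $p = \ell^k \widetilde p$, and then uses Theorem~\ref{dimensions} to argue that $V(\widetilde p) \cap H = \emptyset$ --- hence $V(\widetilde p)$ is exactly the union of the remaining hyperplanes, and one recurses on $\widetilde p$. Your argument instead factors $p$ into irreducibles $c\prod_j q_j^{n_j}$ in the UFD $\C[x_1,\ldots,x_n]$, notes that each $V(q_j)$ is irreducible, uses irreducibility to place $V(q_j)$ inside a single hyperplane $V(\ell_k)$, and then combines Theorem~\ref{dimensions} with the Nullstellensatz to force $(q_j) = (\ell_k)$. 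Both proofs lean on the same two ingredients (Corollary~\ref{no_intrs} and Theorem~\ref{dimensions}); yours trades the paper's hands-on, one-factor-at-a-time induction for a single clean invocation of unique factorization plus the prime-ideal/irreducible-variety correspondence. The paper's route is more elementary and self-contained, relying only on Taylor expansion; yours is shorter and more structural, but implicitly invokes the standard fact that a proper closed subvariety of an irreducible variety has strictly smaller dimension (which the paper uses too, buried in the phrase ``strictly smaller than $H$''). Both are valid, and yours is arguably the more transparent explanation of \emph{why} the complex case is so much simpler than the real case.
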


\begin{proof} By Corollary~\ref{no_intrs}, the variety $V(p)$ is
an irredundant union of allowable hyperplanes. 

Choose a hyperplane $H$ in that union. If $H = Z_{j_0}$ for some $J_0$, expand $p$ into a Taylor series in $x_{j_0}$. If $H = D_{i_0 j_0}$ (or $H = S_{i_0 j_0}$)  for some $i_0$, $j_0$, expand $p$ into a Taylor series in $(x_{i_0} - x_{j_0})$ (or  $(x_{i_0} + x_{j_0})$). In this case, the zeroth coefficient of $p$ in the expansion must be the zero polynomial in $x_j$, $j\neq j_0$ (or $ j \notin \{i_0, j_0\}$). 
Hence there is a $k$ such that $p(x)= x_{j_0}^{k} ~\widetilde{p}(x)$ in the first case, or $p(x) = (x_{i_0} \pm x_{j_0})^k ~ \widetilde{p}(x)$ in the second (third) one. In any case, we choose $k$ maximal, so that $V(\widetilde{p})$ does not include $H$.

It is easy to see that the variety $V(\widetilde{p})$ must include $V(p) \setminus H$ (the union of all the other hyperplanes) -- whose dimension is $n-1$. Moreover, $V(\widetilde{p})$ (by Theorem \ref{dimensions}) has dimension $n-1$ and, by the maximality of $k$, does not include $H$. 

If $V(\widetilde{p}) \cap H \eqbd H'$ were non-empty, it would follow that dim$(H')~\leq~n-2$ (since it is included in the hyperplane $H$, and strictly smaller than $H$). This would contradict Theorem \ref{dimensions}, which states that dim$(V(\widetilde{p})) = n-1$. Therefore it must be that $V(\widetilde{p}) \cap H = \emptyset $, and thus $V(\widetilde{p})$ must equal $V(p) \setminus H$, the union of a smaller number of allowable hyperplanes. 

Proceed inductively by factoring $\widetilde{p}$ in the same fashion. 
\end{proof}

The crucial point in the proof above is that the $V(\widetilde{p}) \cap H$ \emph{must be $\emptyset$}, due to Theorem \ref{dimensions}. The same argument would break, in the real case; to illustrate this, consider the polynomial $p(x_1, x_2, x_3) = x_1^4 + x_1^2 (x_2+x_3)^2$. The variety is $V(p) = \{x_1 = 0 \}$, of dimension $2$, but after factoring out $x_1^2$, the variety of the remaining polynomial, $\widetilde{p} = x_1^2 + (x_2+x_3)^2$, is $\{x_1=0\} \cup \{x_2+x_3=0\}$ -- which has dimension $1$.
We can now prove Theorem \ref{sufficiency_c}.

{\em Proof of Theorem \sl{\ref{sufficiency_c}}.}   
By Corollary~\ref{factors},  $ p=c\prod_j p_j $, with each $p_j$ a power of $x_k$ or $(x_k \pm x_l)$. It also follows that $c$ must be an integer since all coefficients of $p$ are integers.
Since each of the factors is accurately evaluable, and we can get any integer constant $c$ in front of $p$ by repeated addition (followed, if need be, by negation), which are again accurate operations, the algorithm that forms their product and then adds/negates to obtain $c$ evaluates $p$ accurately. 
\hskip 0.2cm $\Box$

Theorem {\sl \ref{sufficiency_c}} implies that only homogeneous polynomials are accurately evaluable over $\mathbb{C}^n$.

\subsubsection{Sufficiency: toward a decision procedure for the real case} \label{suf_r}

In this section we relate the accurate evaluability of a polynomial to the accurate evaluability of its ``dominant terms'', and explore a possible avenue toward a decision procedure to establish the former via a recursive/inductive procedure based on the latter. 

We consider only homogeneous polynomials, for reasons outlined in Section \ref{sec_PositivePolys}, and we also consider separately the branching and non-branching cases. Most of the section is devoted to non-branching algorithms, but we do need branching for our statements at the end; we keep the reader informed of all changes in the assumptions.

To accurately compute a homogeneous polynomial of degree $d$ using a non-branching algorithm, one needs to use a homogenous algorithm, described by the following definition and lemma, to be used later in Section \ref{sec_prune}.

\begin{definition} \label{homalg}
We call an algorithm $p_{comp}(x,\delta)$ with error set $\delta$ for computing $p(x)$
{\em homogeneous of degree $d$\/} if 
\begin{enumerate} 
\item \label{unu1} the final output is of degree $d$ in $x$;
\item \label{doi2} no output of a computational node exceeds degree $d$ in $x$;
\item \label{trei3} the output of every computational node is homogeneous in $x$.
\end{enumerate}
\end{definition}

\begin{lemma} \label{1} If $p(x)$ is a homogeneous polynomial of degree $d$ and 
if a non-branching algorithm  evaluates $p(x)$ accurately by computing 
$p_{comp}(x,\delta)$, the algorithm must itself be homogeneous of degree $d$.
\end{lemma}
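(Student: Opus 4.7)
The proof of Lemma \ref{1} has two stages. Step 1 establishes condition (\ref{unu1}) of Definition \ref{homalg} (final output of degree $d$ in $x$) by a scaling argument, and Step 2 establishes condition (\ref{trei3}) (every intermediate node is homogeneous in $x$) by forward induction on the computation DAG. Condition (\ref{doi2}) (no intermediate exceeds degree $d$) is then a corollary: input nodes have degree $1$, additions/subtractions of homogeneous pieces require equal degrees, and multiplications add degrees, so once any node has $x$-degree $>d$ every descendant would too, forcing the final node to exceed degree $d$ and contradicting condition (\ref{unu1}).

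For Step 1, decompose $p_{comp}(x,\delta) = \sum_k P_k(x,\delta)$ into its $x$-homogeneous parts, where $P_k$ has $x$-degree $k$. Since the domain considered in this section is homogeneous (e.g., $\Rr^n$ or $\C^n$), I may substitute $x \to t x$ for any scalar $t>0$. The accuracy bound together with the homogeneity of $p$ gives
\[
\left| \sum_k t^k P_k(x,\delta) - t^d p(x) \right| \leq \eta \, t^d \, |p(x)|.
\]
Dividing by $t^d$ and letting $t \to \infty$ forces $P_k(x,\delta) = 0$ for $k>d$ on the set where $p(x)\neq 0$ and $|\delta_i|\leq \epsilon$; letting $t\to 0^+$ forces $P_k(x,\delta)=0$ for $k<d$ on the same set. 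Since each $P_k$ is a polynomial vanishing on a nonempty open subset of $(x,\delta)$-space, $P_k \equiv 0$ for $k\neq d$, so $p_{comp}$ is homogeneous of degree $d$ in $x$.

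For Step 2, proceed by forward induction on topological order. Input nodes $x_i$ are homogeneous of degree $1$. For the inductive step, consider $v_j = (v_a\ \mathrm{op}\ v_b)(1+\delta_j)$ where $v_a, v_b$ are homogeneous in $x$ by the inductive hypothesis, of degrees $d_a, d_b$. The cases $\mathrm{op} = \times$, and $\mathrm{op} \in \{+,-\}$ with $d_a = d_b$, preserve homogeneity immediately. The only remaining case is $\mathrm{op} \in \{+,-\}$ with $d_a \neq d_b$, which must be ruled out. In this case $v_j$ would split into two nonzero $x$-homogeneous pieces of distinct degrees, and propagating this through all subsequent arithmetic produces a nonzero contribution $P_{k'}(x,\delta)$ with $k' \neq d$ in $p_{comp}$, contradicting Step 1. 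The main obstacle is justifying this non-cancellation rigorously: although downstream additions and subtractions might appear to annihilate surplus-degree terms, each new node introduces a fresh independent rounding variable $\delta_k$, so two contributions coming from structurally distinct sub-DAGs are polynomials in $\delta$ with different ``$\delta$-histories'' and cannot cancel identically in $\Rr[x,\delta]$ (or $\C[x,\delta]$). The cleanest formalization, as carried out in \cite{focm}, is to propagate the scaling $x\to tx$ uniformly through the entire DAG so that each node's value becomes a polynomial in $t$ with coefficients in $\Rr[x,\delta]$; one then argues, via the algebraic independence of the $\delta_k$'s over $\Rr[x]$, that the only way the final polynomial in $t$ collapses to the single monomial $t^d$ is if every intermediate polynomial in $t$ is likewise a single monomial, which is exactly homogeneity of each intermediate in $x$.
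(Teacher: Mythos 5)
Your proof fills in the paper's one-sentence sketch with a sensible two-step plan, and the spirit matches the paper's: the paper says the proof "involves a combination of expressing the relative errors $|p_{comp}(x, \delta) - p(x)|/|p(x)|$ as in the proof of Theorem~\ref{thm_positive_compact}, and an analysis of the algorithm as a DAG, as in Section~\ref{class_nec}." Your Step~1 (scaling $x\mapsto tx$, decomposing $p_{comp}$ into $x$-homogeneous pieces, and using the accuracy bound as $t\to\infty$ and $t\to 0^+$) is a clean and correct instantiation of the first technique, arriving at the same conclusion via a slightly different decomposition (grouping by $x$-degree rather than by $\delta$-multi-index). Your Step~2 (forward induction through the DAG in topological order) is the right kind of DAG argument, though it is a forward homogeneity-propagation rather than the backward zero-tracing used in Section~\ref{class_nec}; that difference is stylistic, not substantive.

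The genuine gap is exactly where you flag it: the claim that once a node $v_j = (v_a \pm v_b)(1+\delta_j)$ with $d_a \neq d_b$ is created, downstream arithmetic cannot exactly annihilate the off-degree $t$-monomials it introduces. The appeal to "different $\delta$-histories" or to "algebraic independence of the $\delta_k$'s over $\Rr[x]$" is the right intuition but is not yet a proof. In particular, one must rule out partial cancellation between the off-degree $t$-monomial of $v_j$ and contributions reaching the output via other paths that share part of the sub-DAG feeding $v_j$, and one must also handle nodes whose output is identically $0$ (so that $\delta_j$ can disappear from $p_{comp}$ altogether, in which case $v_j$ should simply be deleted rather than argued about). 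A correct formalization typically isolates the lowest-order term in $\delta_j$ of the off-degree $t$-coefficient and shows it survives because $\delta_j$ only ever enters the computation multiplicatively through $v_j$. Since the paper itself defers this to \cite{focm} in the same breath, your proposal is at the same level of rigor as the paper's own presentation of this lemma, and your overall plan is sound; but if you were asked to produce a self-contained proof, Step~2 as written would not yet qualify.

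One small additional remark: your derivation of condition~(\ref{doi2}) from (\ref{unu1}) and (\ref{trei3}) implicitly uses that every node lies on a path to the output and that no node output is identically zero (so that multiplication strictly increases the degree and like-degree addition preserves it). Both assumptions are harmless, but they should be stated, since they are exactly the degenerate cases one prunes away before running the DAG induction.
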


The proof involves a combination of expressing the relative errors \noindent $|p_{comp}(x, \delta) - p(x)|/|p(x)|$ as in the proof of Theorem \ref{thm_positive_compact}, and an analysis of the algorithm as a DAG, as in Section \ref{class_nec}.

Due to the complexity of the issues, the rest of this section is subdivided into four parts: \begin{itemize} 
 \item Section \ref{dom} makes rigorous the notion of dominance and explains how to find the dominant terms
 by using various simple linear changes of variables. 
\item In Section \ref{sec_prune}, we explain how to ``prune'' an algorithm to manufacture an algorithm that 
evaluates one of its dominant terms, and we establish that accurate evaluation of the dominant terms 
identified in Section~\ref{dom} is \emph{necessary} for the accurate evaluation of the polynomial. 
\item Section \ref{sec_suff} establishes that accurate evaluation of a special set of dominant terms, 
together with the slices of space where they dominate, is sufficient for accurate evaluation of the 
polynomial. \item Finally, Section \ref{obst} discusses obstacles to a complete inductive procedure. 
\end{itemize}

\subsubsection{Dominance} \label{dom}

We now describe what we mean by ``dominant terms'' of the polynomial. Given an allowable variety $V(P)$, we fix an irreducible component of $V(p)$. Any such component is described by linear allowable constraints. We note (see \cite{focm}) that any
given component of $V (p)$ can be put into the form $x_1 = x_2 = ... = x_k = 0 $ using what we call
a standard change of variables; standard changes of variables are linear transformations
of the variables, which are intuitively simple, but whose exact combinatorial definition is long and we choose to leave it out.

After a standard change of variables, we look at the component $x_1 = x_2 = ... = x_k = 0$. We can assume that the polynomial $p(x)$ can be written
 (almost following MATLAB notation) as $$p(x) = \sum_{\lambda \in \Lambda} c_{\lambda} x_{[1:k]}^{\lambda} q_{\lambda}(x_{[k+1:n]})~,$$ where we write $x_{[1:k]} :=(x_1, . . . , x_k), x_{[k+1:n]} :=(x_{k+1}, . . . , x_n)$.
Also, we let $\Lambda$ be the set of all multi-indices $\lambda :=(\lambda_1, . . . , \lambda_k)$ appearing above.

To determine all dominant terms associated with the component $x_1 = x_2 =...= x_k =0$,
consider the Newton polytope $P$ of the polynomial $p$ with respect to the 
variables $x_1$ through $x_k$ only,  i.e., the convex hull of the exponent vectors $\lambda \in \Lambda$ (see, e.g.,~\cite[p.~71]{MS}). 
Next, consider the normal fan $N(P)$ of $P$ (see~\cite[pp.~192--193]{Z}) consisting of the cones of all row vectors $\eta$ whose dot products with $x\in P$ are maximal for $x$ on a fixed face of $P$. That means that for every nonempty face $F$ of $P$  we take
$$ N_F\eqbd \{ \eta=(n_1, \ldots, n_k)\in (\Rr^k) : F\subseteq \{ x\in P : \eta x(\eqbd \sum_{j=1}^k n_j x_j )= \max_{y\in P}  \eta y  \}   \}  $$   
and $$ N(P)\eqbd \{ N_F : \;  F \;\hbox{\rm is a face of} \; P  \}.   $$

\noindent
Finally, consider the intersection of the negative of the normal fan $-N(P)$ and the nonnegative  quadrant $\Rr^k_+$. This splits the first quadrant $\Rr^k_+$ into several regions
$S_{\Lambda_j}$ according to which subsets $\Lambda_j$ of exponents $\lambda$ 
``dominate'' close to the considered component of the variety $V(p)$, in the
following sense:

\begin{definition}
Let $\Lambda_j$ be a subset of $\Lambda$ that determines a face of the Newton
polytope $P$ of $p$ such that the negative of its normal cone $-N(P)$ 
intersects $(\Rr^k)_+$ non-trivially
(not only at the origin). Define $S_{\Lambda_j} \in (\Rr^k)_+$ to be the 
set of all nonnegative row vectors $\eta$ such that 
\[ \eta{\lambda_1} = \eta{\lambda_2} < \eta {\lambda}, ~~\forall \lambda_1, \lambda_2 \in \Lambda_j, ~~\mbox{and}~ \lambda \in \Lambda \setminus \Lambda_j. \]
\end{definition}

Note that if $x_1$ through $x_k$ are small, then the exponential change of variables 
$x_j \mapsto -\log |x_j|$ gives rise to a correspondence between the nonnegative part 
of $-N(P)$ and the space of original variables $x_{[1{:}k]}$.  
We map back the sets $S_{\Lambda_j}$ into a neighborhood of $0$ in
 $\Rr^k$ by lifting:
%\footnote{This is reminiscent of the concept of an amoeba introduced in~\cite{GKZ}.}

\begin{definition}
Let $F_{\Lambda_j} \subseteq [-1,1]^k$ be the set of all points $x_{[1:k]}\in \Rr^k$ such that 
\[\eta\eqbd (-\log |x_1|, \ldots, - \log|x_k|) \in S_{\Lambda_j}. \]
\end{definition}
For any $j$, the closure of $F_{\Lambda_j}$ contains the origin in $\Rr^k$.
Given a point $x_{[1:k]} \in F_{\Lambda_j}$, and given $\eta=(n_1, n_2, \ldots, n_k) \in S_{\Lambda_j}$, for any $t\in (0,1)$, the vector $(x_1 t^{n_1}, \ldots, x_k t^{n_k})$ is in $F_{\Lambda_j} $.  Indeed, if $(-\log|x_1|, \ldots, -\log |x_k|)\in S_{\Lambda_j}$,
then so is  $(-\log|x_1|,\ldots,-\log|x_k|)-\log|t|\eta$, since all
equalities and inequalities that define $S_{\Lambda_j}$ will be preserved,
the latter because $\log|t|<0$. 

\begin{example} \label{example} Consider the following polynomial:
$$p(x_1,x_2,x_3)=x_2^8 x_3^{12} +x_1^2 x_2^2 x_3^{14}+ x_1^8 x_3^{12}+
x_1^6x_2^{14}+x_1^{10}x_2^6 x_3^4.$$ This polynomial is positive and easy to evaluate accurately; the reason we have chosen it is to illustrate the Newton polytope, its normal fan, and the sets $F_{\Lambda_j}$ and $S_{\Lambda_j}$ defined above.
 
For this example, 
\[
V(p) = \{x_1 = x_2 = 0\} \cup \{x_1 = x_3 = 0 \} \cup \{x_2 = x_3 = 0\}~.
\]
 
We examine the behavior of the polynomial near the $x_1=x_2=0$ component of the variety (i.e., we consider $x_3$ to be large). Note that only the first three monomial terms, $x_2^8 x_3^{12}$, $x_1^2 x_2^2 x_3^{14}$, and $x_1^8 x_3^{12}$ will play an important role, since if $x_1, x_2 \ll 1$, $x_1^6x_2^{14} \ll x_2^8  x_3^{12}$, respectively, $x_1^{10}x_2^6 x_3^4 \ll x_1^8 x_3^{12}$. 

We show below the Newton polytope $P$ of $p$ with respect to 
the variables $x_1$, $x_2$, its normal fan $N(P)$, the intersection
$-N(P)\cap R^2_+$, the regions $S_{\Lambda_j}$, and the regions $F_{\Lambda_j}$. 
\end{example}

%\vskip 0.2cm

%\begin{tabular}{lllll}
%\multicolumn{2}{l}{\epsfxsize=4.75cm \epsfbox{fig1.eps}} & &
%\multicolumn{2}{l}{\epsfxsize=4.75cm \epsfbox{fig2.eps}} \\
%Figure 1. & The Newton polytope $P$  & & \hspace{.25cm} Figure 2. & Its normal fan $N(P)$. \\
%\hspace{1.1cm} & for Example \ref{example}. & & & \\
%\end{tabular} 

\noindent
%\begin{figure}
\begin{tabular}{llll}
\multicolumn{2}{l}{\epsfxsize=7.75cm \epsfbox{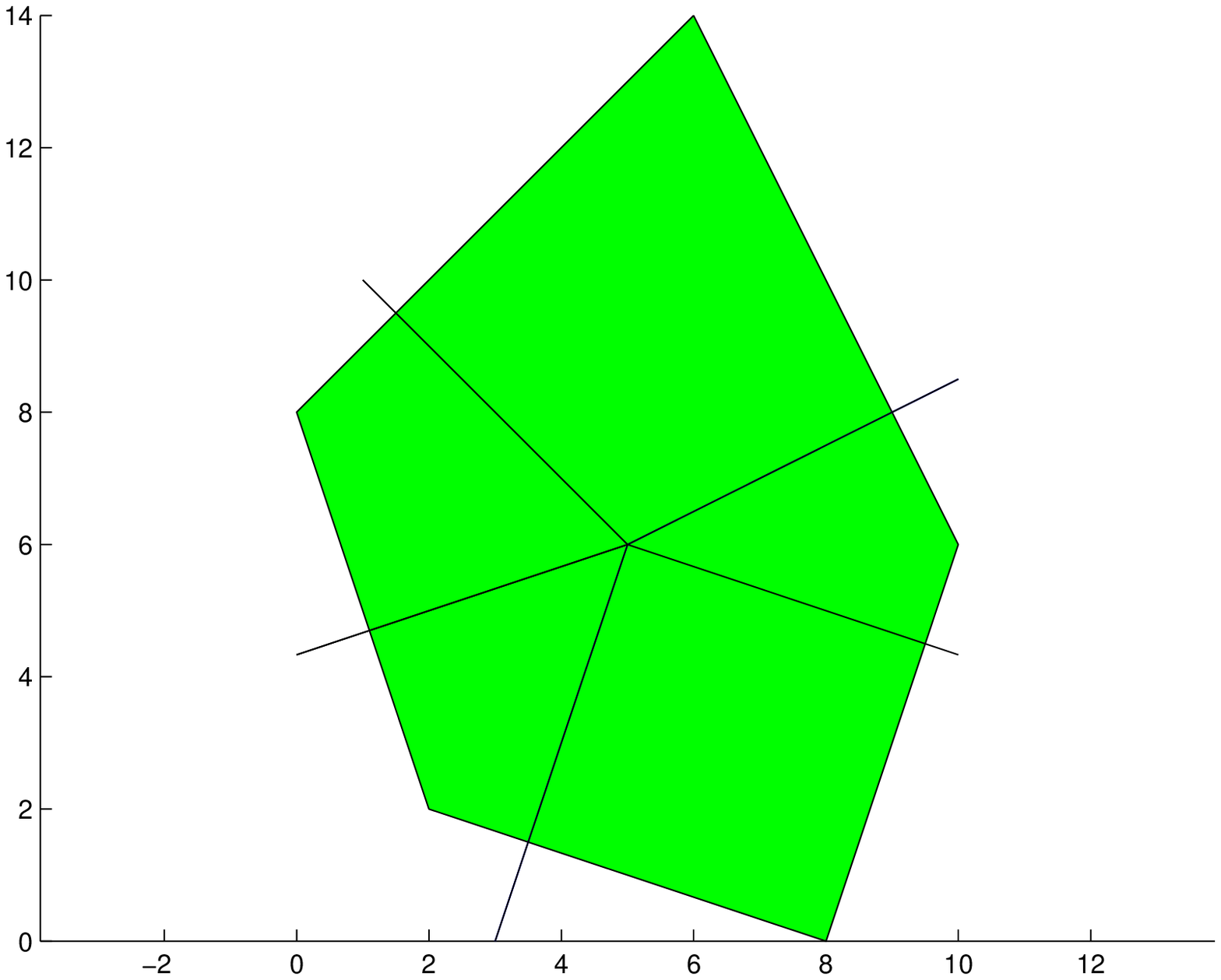}} 
&\multicolumn{2}{l}{\epsfxsize=7.75cm \epsfbox{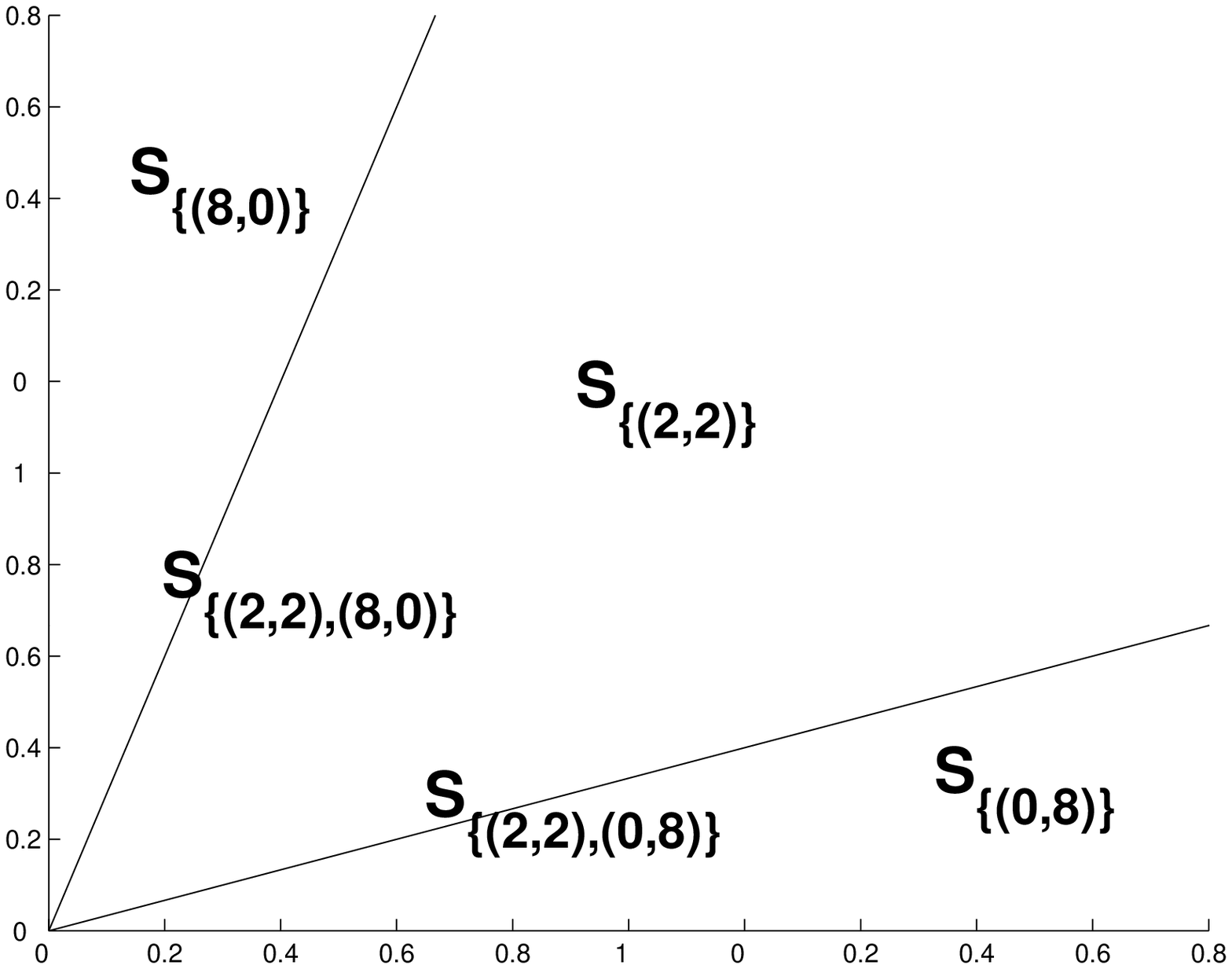}} \\
Figure 3: & The Newton polytope $P$ and its & 
Figure 4: & The intersection $-N(P)\cap \Rr^k_+$ and \\ 
\hspace{1.1cm} & normal fan $N(P)$ for Example \ref{example}.  
& & the regions $S_{\Lambda_j}$.  \\
\end{tabular} 
%\label{figur3}
%\end{figure}

%\vskip 0.25cm
 
%\begin{tabular}{lllll}
%\multicolumn{2}{l}{\epsfxsize=4.75cm \epsfbox{fig3.eps}} & &
%\multicolumn{2}{l}{\epsfxsize=4.75cm \epsfbox{fig4.eps}}  \\
%Figure 3. & The intersection $-N(P)\cap \Rr^k_+$. & &
%\hspace{.25cm} Figure 4. & The regions $S_{\Lambda_j}$.  \\
%\end{tabular}

%\begin{figure}
%\begin{tabular}{ll}
%\multicolumn{2}{l}{\epsfxsize=7.75cm \epsfbox{Sregions.eps}}  \\
%Figure 4: & The intersection $-N(P)\cap \Rr^k_+$ and the regions $S_{\Lambda_j}$.  \\
%\end{tabular}
%\label{figur4}
%\end{figure}

\noindent
%\begin{figure}
\begin{tabular}{ll}
\multicolumn{2}{l}{\epsfxsize=11.0cm \epsfbox{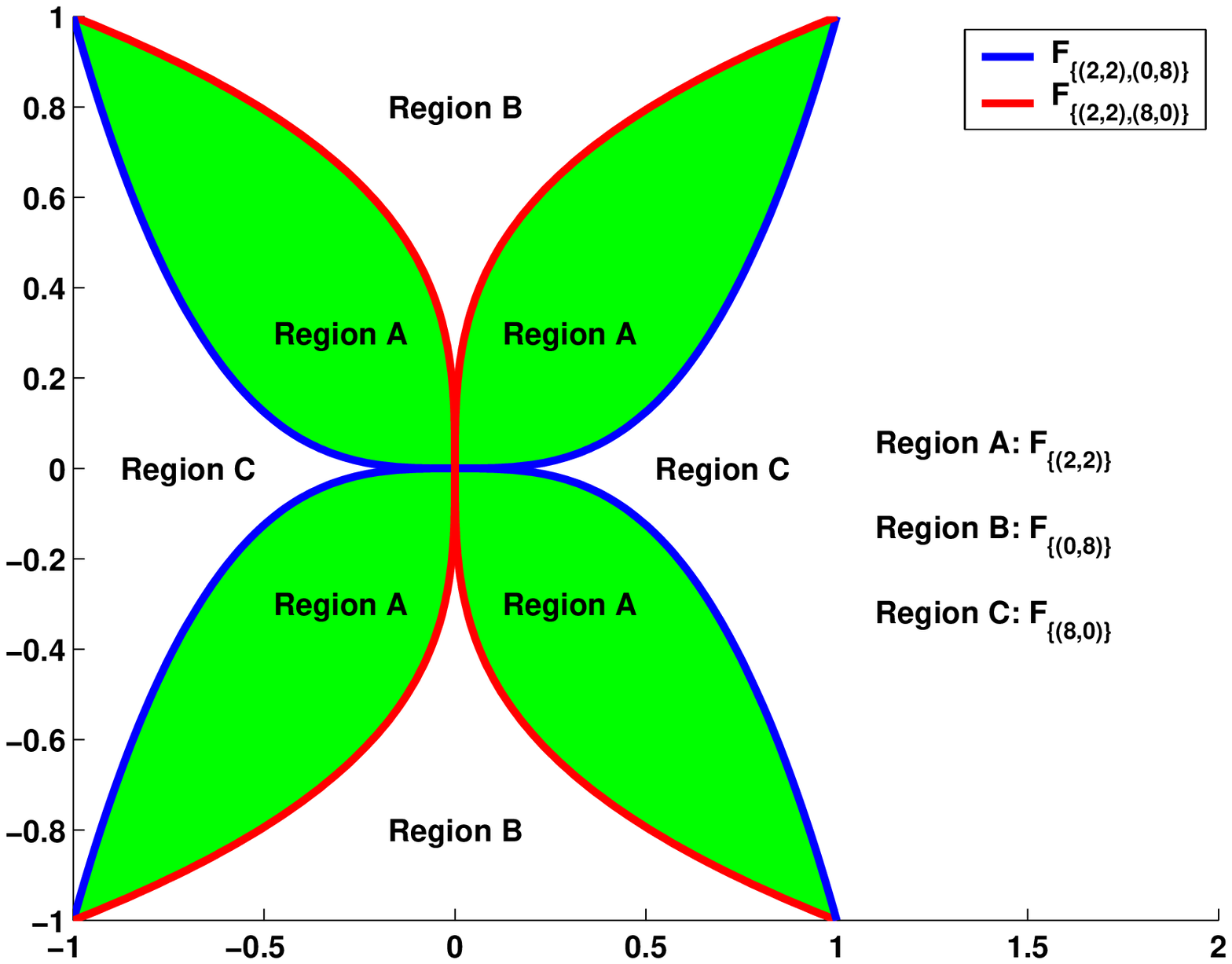}} \\
Figure 5: & The regions $F_{\Lambda_j}$.  \\ \\
\end{tabular}
%\label{figur5}
%\end{figure}

\begin{definition}
%It follows that, if we let $x_i \rightarrow 0$ for all $i =1, \ldots, k$, and %approach the particular component of the variety from within the region $F_{\Lamb%da_j} \times \mathbb{R}^{n-k}$, 
%
We define the {\em dominant term\/} of $p(x)$ corresponding  to the component 
$x_1=\cdots=x_k=0$ and the region $F_{\Lambda_j}$ by
\[
p_{dom_j}(x) \eqbd \sum_{\lambda \in \Lambda_j} c_{\lambda} x_{[1:k]}^{\lambda} q_{\lambda}(x_{[k+1:n]})~.
\]  \end{definition}

The following observations about dominant terms are immediate. 

\begin{lemma} \label{leading} Let $\eta=(n_1, \ldots, n_k)\in S_{\Lambda_j}$ and let
$d_j\eqbd \sum_{\lambda_i\in \Lambda_j} \lambda_i n_i$.  Let $x^0$ be fixed and let 
$$ x(t)\eqbd (x_1(t), \ldots, x_n(t)), \qquad  x_j(t) \eqbd  \left\{ \begin{array}{ll} t^{n_j} x^0_j &  
j=1,\ldots, k,  \\ x^0_j, & j=k+1, \ldots, n. \end{array} \right. $$ 
Then $p_{dom_j}(x(t))$ has degree $d_j$ in $t$ and is the lowest degree 
term of $p(x(t))$ in $t$, that is
$$p(x(t))=p_{dom_j}(x(t))+o(t^{d_j})  \quad {\rm as}\;\; t\to 0, \qquad \deg_t p_{dom_j}(x(t))=d_j.  $$
\end{lemma}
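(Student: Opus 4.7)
The plan is to unwind the definitions and perform the direct substitution, exploiting the key property of the region $S_{\Lambda_j}$: the functional $\eta \mapsto \eta \cdot \lambda$ attains its minimum over $\Lambda$ precisely on $\Lambda_j$, and the minimum value is exactly $d_j$.

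First, I would substitute $x(t)$ into the polynomial written in the form
\[
p(x) = \sum_{\lambda \in \Lambda} c_{\lambda}\, x_{[1:k]}^{\lambda}\, q_{\lambda}(x_{[k+1:n]}).
\]
For each $\lambda \in \Lambda$,
\[
x_{[1:k]}(t)^{\lambda} = \prod_{i=1}^{k} (t^{n_i} x_i^0)^{\lambda_i} = t^{\eta \cdot \lambda}\, (x^0_{[1:k]})^{\lambda},
\]
while $q_{\lambda}(x_{[k+1:n]}(t)) = q_{\lambda}(x^0_{[k+1:n]})$ is independent of $t$. Collecting terms,
\[
p(x(t)) = \sum_{\lambda \in \Lambda} c_{\lambda}\, t^{\eta \cdot \lambda}\, (x^0_{[1:k]})^{\lambda}\, q_{\lambda}(x^0_{[k+1:n]}).
\]

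Next I would invoke the defining property of $S_{\Lambda_j}$: for every $\lambda \in \Lambda_j$, $\eta \cdot \lambda = d_j$, and for every $\lambda \in \Lambda \setminus \Lambda_j$, $\eta \cdot \lambda > d_j$. Splitting the sum according to this dichotomy, the $\Lambda_j$-part factors as $t^{d_j}$ times a $t$-independent expression, which by the definition of $p_{dom_j}$ is exactly $p_{dom_j}(x(t))$:
\[
\sum_{\lambda \in \Lambda_j} c_{\lambda}\, t^{d_j}\, (x^0_{[1:k]})^{\lambda}\, q_{\lambda}(x^0_{[k+1:n]}) = p_{dom_j}(x(t)).
\]
The remaining sum is
\[
\sum_{\lambda \in \Lambda \setminus \Lambda_j} c_{\lambda}\, t^{\eta \cdot \lambda}\, (x^0_{[1:k]})^{\lambda}\, q_{\lambda}(x^0_{[k+1:n]}),
\]
and since $\eta \cdot \lambda - d_j > 0$ for every term appearing in this sum, each term is $o(t^{d_j})$ as $t \to 0$, and hence so is the finite sum.

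Finally, I would confirm the degree claim: $p_{dom_j}(x(t)) = t^{d_j}\, p_{dom_j}(x^0)$, so its $t$-degree is $d_j$ whenever $p_{dom_j}(x^0) \neq 0$, which holds for generic $x^0$ (and is implicit in the statement's intent, since otherwise $p_{dom_j}$ would be the zero polynomial and the lemma would be vacuous). The main "obstacle" is really only notational bookkeeping: the proof is a mechanical computation once one recognizes that the negative-normal-fan condition defining $S_{\Lambda_j}$ is precisely the assertion that $\Lambda_j$ indexes the $\eta$-minimal face of the Newton polytope, which makes the $t^{d_j}$ terms dominate as $t \to 0$.
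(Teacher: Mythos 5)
Your proof is correct and takes the only natural approach; indeed, the paper asserts this lemma as an ``immediate'' observation and provides no proof, and your direct-substitution computation spells out precisely that observation. One small imprecision: you justify $p_{dom_j}(x^0)\neq 0$ by saying that ``otherwise $p_{dom_j}$ would be the zero polynomial,'' but $p_{dom_j}(x^0)=0$ at a particular $x^0$ does not force $p_{dom_j}\equiv 0$; the correct reading (confirmed by the corollary immediately following the lemma, which explicitly hypothesizes $p_{dom_j}(x^0)\neq 0$) is simply that the degree claim requires $x^0\notin V(p_{dom_j})$, which as you note holds generically.
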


\begin{corollary} Under the assumptions of Lemma~\ref{leading} suppose that
$p_{dom_j}(x^0)\neq 0$. Then 
$$\lim_{t\to 0} { p_{dom_j}(x(t)) \over  p(x(t))}=1.   $$  
\end{corollary}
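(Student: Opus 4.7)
The plan is to derive the corollary as a short deduction from Lemma~\ref{leading}, essentially by factoring out the leading power of $t$ from both numerator and denominator.

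First I would expand $p_{dom_j}(x(t))$ directly using the substitution $x_j(t) = t^{n_j}x_j^0$ for $j=1,\ldots,k$ and $x_j(t)=x_j^0$ otherwise. For each $\lambda \in \Lambda_j$, the monomial $x_{[1:k]}^{\lambda}$ becomes $t^{\lambda_1 n_1 + \cdots + \lambda_k n_k}(x^0_{[1:k]})^{\lambda}$, while $q_\lambda(x_{[k+1:n]}(t))$ is unchanged and equals $q_\lambda(x^0_{[k+1:n]})$. Since $\Lambda_j$ indexes a face of the Newton polytope whose outer normal lies in $-N(P)\cap\Rr^k_+$, the defining equalities of $S_{\Lambda_j}$ force $\sum_i \lambda_i n_i = d_j$ for \emph{every} $\lambda \in \Lambda_j$. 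Consequently the same power $t^{d_j}$ factors out of every summand, giving
$$ p_{dom_j}(x(t)) \;=\; t^{d_j}\sum_{\lambda\in\Lambda_j} c_\lambda (x^0_{[1:k]})^\lambda q_\lambda(x^0_{[k+1:n]}) \;=\; t^{d_j}\,p_{dom_j}(x^0). $$

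Second, I would apply Lemma~\ref{leading}, which states $p(x(t))=p_{dom_j}(x(t))+o(t^{d_j})$ as $t\to 0$. Substituting the factorization above yields $p(x(t)) = t^{d_j} p_{dom_j}(x^0) + o(t^{d_j})$. Dividing,
$$ \frac{p_{dom_j}(x(t))}{p(x(t))} \;=\; \frac{t^{d_j}\,p_{dom_j}(x^0)}{t^{d_j}\,p_{dom_j}(x^0)+o(t^{d_j})} \;=\; \frac{p_{dom_j}(x^0)}{p_{dom_j}(x^0)+o(1)}. $$
Under the hypothesis $p_{dom_j}(x^0)\neq 0$, the denominator stays bounded away from zero for all sufficiently small $t$, so the ratio converges to $1$.

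I do not anticipate any real obstacle: the only nontrivial ingredient is the observation that all exponents in $\Lambda_j$ share the same weighted degree $d_j$ with respect to $\eta$, which is just the definition of $\Lambda_j$ as a face of $P$ whose normal is $\eta$. The hypothesis $p_{dom_j}(x^0)\neq 0$ is exactly what is needed to prevent the leading coefficient in $t$ from vanishing, and without it the limit could of course fail.
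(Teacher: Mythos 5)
Your proof is correct and follows the natural route that the paper leaves implicit (the paper states the corollary as "immediate" after Lemma~\ref{leading} without supplying details). The key step — observing that because $\eta\in S_{\Lambda_j}$ forces $\eta\cdot\lambda = d_j$ for every $\lambda\in\Lambda_j$, the substitution yields the exact identity $p_{dom_j}(x(t)) = t^{d_j}\,p_{dom_j}(x^0)$ — is precisely what makes the division by $t^{d_j}$ clean, after which the hypothesis $p_{dom_j}(x^0)\neq 0$ keeps the denominator away from zero as $t\to 0^+$.
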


The next question is whether the term $p_{dom_j}$ dominates indeed the remaining
terms of $p$ in the region $F_{\Lambda_j}$ in the sense that $p_{dom_j} (x)/p(x)$ is close to $1$ sufficiently close to $x_1 = \cdots = x_k = 0$. Indeed, we show that each dominant term $p_{dom_j}$ such that the convex hull of $\Lambda_j$ is a facet of the Newton polytope of $p$ and whose variety $V (p_{dom_j} )$
does not have a component strictly larger than the set $x_1 =\cdots = x_k = 0$ dominates the
remaining terms in $p$, not only in $F_{\Lambda_j}$, but in a certain \emph{slice} $\tilde{F}_{\Lambda_j}$ around $F_{\Lambda_j}$. These dominant
terms, corresponding to larger sets $\Lambda_j$ , are the useful ones, since they pick up terms relevant not
only in the region $F_{\Lambda_j}$ but also in its neighborhood. 

In Example~\ref{example} above, the useful dominant terms correspond to the regions $F_{\{(2,2),(8,0)\}}$ and $F_{\{(2,2),(0,8)\}}$ (the only relevant edges of the polygon). This points to the fact that we should be 
ultimately interested only in dominant terms corresponding to the facets, i.e., the
highest-dimensional faces, of the Newton polytope of $p$. Note that the convex hull
of $\Lambda_j$ is a facet of the Newton polytope $N$ if and only if the set
$S_{\Lambda_j}$ is a one-dimensional ray. 
 
The next lemma will be instrumental for our results in Section~\ref{sec_suff}.  It shows 
that each 
dominant term $p_{dom_j}$ such that the convex hull of $\Lambda_j$ is a facet of the 
Newton polytope of $p$  and whose variety $V(p_{dom_j})$ does not have a  component 
strictly larger than the  set $x_1=\cdots= x_k=0$ indeed dominates the remaining terms 
in $p$ in a certain ``slice'' $\widetilde{F}_{\Lambda_j}$  around $F_{\Lambda_j}$.

\begin{lemma} \label{true_dom} Let $p_{dom_j}$ be the dominant term of a homogeneous 
polynomial $p$ corresponding to the component $x_1=\cdots =x_k=0$ of the variety $V(p)$ 
and to the set $\Lambda_j$ whose convex hull is a facet of the Newton polytope $N$.
 
%Suppose that the variety $V(p_{dom_j})$ of  $p_{dom_j}$ does not have a component strictly
%larger than the set $x_1=\cdots =x_k=0$. 
Let $\widetilde{S}_{\Lambda_j}$ be any closed pointed cone in $(\Rr^k)_+$ with vertex at
$0$ that does not intersect other one-dimensional rays $S_{\Lambda_l}$, $l\neq j$, and 
contains $S_{\Lambda_j} \setminus\{0\}$ in its interior. Let  $\widetilde{F}_{\Lambda_j}$
be the closure of the set
\begin{equation}
 \{x_{[1:k]} \in [-1,1]^k :  
 (-\log |x_1|, \ldots, - \log|x_k|) \in \widetilde{S}_{\Lambda_j} \}. \label{slices}
\end{equation} 
Suppose the variety $V(p_{dom_j})$ of $p_{dom_j}$ is allowable and intersects 
$\widetilde{F}_{\Lambda_j}$ 
only at $0$. Let $\| \cdot\|$ be any norm. Then, for any $\delta=\delta(j)>0$, there 
exists  $\varepsilon=\varepsilon(j)>0$  such that 
\begin{equation}
 \left|{p_{dom_j}(x_{[1:k]}, x_{[k+1:n]}) \over  p(x_{[1:k]}, x_{[k+1:n]})} -1  \right|<
\delta  \quad {\rm whenever} \;\; {\|x_{[1:k]}\|\over\|x_{[k+1:n]}\|}\leq \varepsilon \;\; {\rm and} \;\;
x_{[1:k]}\in \widetilde{F}_{\Lambda_j}.   \label{dom_inq}
\end{equation}
\end{lemma}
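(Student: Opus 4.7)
My plan is to decompose $p = p_{dom_j} + r$, where $r$ collects the monomial terms with indices in $\Lambda \setminus \Lambda_j$, and then to show that $|r(x)|/|p_{dom_j}(x)|$ tends to $0$ uniformly as $\|x_{[1:k]}\|/\|x_{[k+1:n]}\| \to 0$ within $\widetilde{F}_{\Lambda_j}$. Once that ratio is below $\delta/2$, the conclusion \eqref{dom_inq} follows by an elementary rearrangement of $|p_{dom_j}/p - 1| = |r|/|p|$.

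The first ingredient I would establish is a \emph{uniform dominance gap} on $\widetilde{S}_{\Lambda_j}$. Since $\widetilde{S}_{\Lambda_j}$ is closed, pointed, avoids every other ray $S_{\Lambda_l}$ ($l\neq j$), and contains $S_{\Lambda_j}\setminus\{0\}$ in its interior, a compactness argument on the intersection of $\widetilde{S}_{\Lambda_j}$ with the unit sphere yields a constant $\gamma>0$ such that
\[
\min_{\mu \in \Lambda\setminus\Lambda_j} \hat\eta\cdot\mu \;-\; \max_{\lambda\in \Lambda_j} \hat\eta\cdot \lambda \;\geq\; \gamma \qquad \text{for every unit } \hat\eta\in \widetilde{S}_{\Lambda_j}.
\]
Translating back via $\eta_i := -\log|x_i|$, this gives the monomial ratio bound $|x_{[1:k]}^{\mu}|/|x_{[1:k]}^{\lambda}| \leq e^{-\gamma\|\eta\|}$ valid for all $x_{[1:k]}\in \widetilde{F}_{\Lambda_j}$, $\lambda\in \Lambda_j$ and $\mu\notin \Lambda_j$, with $\|\eta\|\to\infty$ as $\|x_{[1:k]}\|\to 0$.

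The second ingredient, which I expect to be the main obstacle, is a matching lower bound $|p_{dom_j}(x)| \geq c\,\Phi(x)$, where $\Phi(x) := \max_{\lambda\in \Lambda_j} |c_\lambda\, x_{[1:k]}^\lambda\, q_\lambda(x_{[k+1:n]})|$. To obtain it I would rescale along a generator $\eta^0$ of $S_{\Lambda_j}$ as in Lemma~\ref{leading}: since the facet assumption forces $S_{\Lambda_j}$ to be one-dimensional, all $\lambda\in \Lambda_j$ carry the same weighted degree under this rescaling, so $p_{dom_j}$ becomes, up to a common positive factor $t^{d_j}$, a polynomial whose real variety is again allowable and still meets the normalized slice only at $0$. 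A compactness argument on that slice then gives a uniform lower bound: each allowable linear factor of $p_{dom_j}$ is bounded below away from $x_{[1:k]}=0$, and any non-allowable factors contribute only bounded positive quantities since their real zero sets are disjoint from the slice.

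Combining the two ingredients, a term-by-term estimate on $r$ (using the monomial bound from Step~1 together with the homogeneity $\deg q_\mu = d-|\mu|$ and the running hypothesis $\|x_{[1:k]}\| \leq \|x_{[k+1:n]}\|$ to control the $q_\mu$ factors) will yield $|r(x)| \leq C\, e^{-\gamma\|\eta\|}\, \Phi(x)$, and hence $|r(x)|/|p_{dom_j}(x)| \leq (C/c)\, e^{-\gamma\|\eta\|}$. Choosing $\varepsilon$ small enough to make $\|\eta\|$ large enough forces the right-hand side below any prescribed $\delta$, completing the proof. The delicate point in the whole argument is the passage in Step~2 from the set-theoretic non-vanishing hypothesis on $V(p_{dom_j})$ to the \emph{quantitative} lower bound matching exactly the size of the maximal monomial of $\Lambda_j$; the facet hypothesis and the allowability of $V(p_{dom_j})$ are precisely what make that passage possible.
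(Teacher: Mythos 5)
The paper does not prove this lemma (it defers to reference \cite{focm}), so I evaluate your argument on its own terms. Your overall plan --- decompose $p = p_{dom_j} + r$, establish a uniform exponential decay of the ``remainder'' monomials relative to the dominant ones, and couple that with a quantitative lower bound on $|p_{dom_j}|$ away from its variety --- is the right skeleton. However, there are two substantive problems.

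First, the uniform dominance gap in Step~1 is stated incorrectly and is in fact false as written. You claim
$\min_{\mu \notin \Lambda_j} \hat\eta\cdot\mu - \max_{\lambda\in\Lambda_j} \hat\eta\cdot\lambda \geq \gamma$
for all unit $\hat\eta\in\widetilde S_{\Lambda_j}$. But $\widetilde S_{\Lambda_j}$ contains $S_{\Lambda_j}$ in its \emph{interior}, hence it necessarily slices into the full-dimensional normal cones on either side of the ray $S_{\Lambda_j}$, where only a single vertex $\lambda^*$ of the facet minimizes $\hat\eta\cdot\lambda$. As $\hat\eta$ moves across such a region there is a crossover direction at which $\hat\eta\cdot\lambda_2 = \hat\eta\cdot\mu_0$ for a non-optimal $\lambda_2\in\Lambda_j$ and a vertex $\mu_0\notin\Lambda_j$ adjacent to $\lambda^*$; past the crossover the left-hand side of your display becomes \emph{negative}. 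The hypothesis only forces $\widetilde S_{\Lambda_j}$ to avoid the other \emph{rays} $S_{\Lambda_l}$, not these intermediate crossover directions, so your compactness argument does not produce $\gamma>0$. The correct quantity that is uniformly bounded below is
$\min_{\mu\notin\Lambda_j}\hat\eta\cdot\mu - \min_{\lambda\in\Lambda_j}\hat\eta\cdot\lambda$, i.e.\ you may only compare each non-dominant monomial against the \emph{maximal} dominant monomial, not against every $\lambda\in\Lambda_j$. This is what your Step~3 actually uses, but your stated Step~1 overclaims and hence the deduction is not internally sound until you repair it.

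Second, the lower bound in Step~2 --- which you rightly single out as the crux --- is not established by the argument you sketch. You appeal to ``each allowable linear factor of $p_{dom_j}$ is bounded below away from $x_{[1:k]}=0$.'' But for a \emph{real} polynomial, allowability of $V(p_{dom_j})$ does \emph{not} imply that $p_{dom_j}$ factors into linear pieces; the polynomial $x_1^2(x_3-x_4)^2 + x_2^4$, for example, has allowable real variety $\{x_2=0\}\cap(\{x_1=0\}\cup\{x_3=x_4\})$ but is irreducible. The factorization result Corollary~\ref{factors} holds only over $\C$, and invoking it here would quietly import the complex theory. Moreover, a single one-parameter rescaling along the generator $\eta^0$ of $S_{\Lambda_j}$ does not normalize the whole slice $\widetilde F_{\Lambda_j}$: points of that slice correspond to a \emph{cone} of directions $\hat\eta$, under which the $\Lambda_j$-monomials do \emph{not} all carry the same weighted degree except exactly on the central ray. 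Establishing a lower bound $|p_{dom_j}(x)|\geq c\,\max_{\lambda\in\Lambda_j}|x_{[1:k]}^\lambda|$ (the form actually needed to close the loop with your corrected Step~1) uniformly over a full neighbourhood of the ray, and uniformly in the normalized $x_{[k+1:n]}$, requires a more elaborate compactification of the blown-up slice than the one you propose. The facet hypothesis and the non-intersection hypothesis \emph{are} the right ingredients, but the argument you describe does not yet use them to obtain the quantitative estimate. Until that is fixed the final inequality $|r(x)| \leq C\,e^{-\gamma\|\eta\|}\,\Phi(x)$ does not follow either, since $\Phi$ involves the possibly small factors $q_\lambda(x_{[k+1:n]})$ while your Step~1, even corrected, only controls the bare monomials $|x_{[1:k]}^{\lambda}|$.
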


For a proof of Lemma \ref{true_dom}, the reader is referred to \cite{focm}. 

The above discussion of dominance was based on the transformation of a given irreducible
component of the variety to the form $x_1 = \cdots = x_k = 0$. We must reiterate that the identification
of dominant terms becomes possible only after a suitable change of variables $C$ is used to put a
given irreducible component into the standard form $x_1 = \cdots = x_k = 0$ and then the sets $\Lambda_j$ are
determined. Note however that the polynomial $p_{dom_j}$ is given in terms of the original variables,
i.e., as a sum of monomials in the original variables $x_q$ and sums/differences $x_q  \pm x_r$. We therefore
use the more precise notation $p_{dom_j ,C}$ in the rest of this section.

\begin{definition} Without loss of generality we can assume that any standard change of variables 
has the form
\begin{eqnarray*} \label{change*}
%\begin{array}{l}
\!\!\!\!\!\!\!\!\!\!\! & & x = (x_{[1:k_1]}, x_{[k_1+1:k_2]}, \ldots, x_{[k_{l-1}+1:k_l]}) \mapsto \widetilde{x}=(\widetilde{x}_{[1:k_1]}, \widetilde{x}_{[k_1+1:k_2]}, \ldots, 
\widetilde{x}_{[k_{l-1}+1:k_l]}), \\
\!\!\!\!\!\!\!\!\!\!\!  & & \qquad {\rm where} ~~\widetilde{x}_{k_m+1}\eqbd x_{k_m+1}, \;\; \widetilde{x}_{k_m+2}\eqbd 
x_{k_m+2} -\sigma_{k_m+2} x_{k_m+1}, \;\; \ldots, \;\; \\
\!\!\!\!\!\!\!\!\!\!\! & & \widetilde{x}_{k_{m+1}} \eqbd  x_{k_{m+1}} -\sigma_{k_{m+1}} x_{k_{m+1}},~~
k_0\eqbd 0, \quad \sigma_r=\pm 1 \;\;\; \hbox{\rm for all pertinent}\;\; r ~.
% \end{array} 
\end{eqnarray*}
\end{definition}

\vspace{.2cm}

Note also that we can think of the vectors $\eta\in S_{\Lambda_j}$ as being indexed by 
integers $1$ through $k_l$, i.e., $\eta=(n_1, \ldots, n_{k_l})$. Moreover, to define 
pruning in the next subsection we will assume that
\begin{equation}
n_{k_m+1}\leq n_r \quad \hbox{\rm for all}\;\; \;r=k_m+2, \ldots, k_{m+1}
\quad \hbox{\rm and for all} \;\; m=0, \ldots, l-1.
\label{exp_cond}
\end{equation}

\subsubsection{Pruning}  \label{sec_prune}

We show here how to convert an accurate algorithm that evaluates a polynomial $p$
into an accurate  algorithm that evaluates a selected dominant term $p_{dom_j,C}$. This will imply that being able to evaluate dominant terms accurately is a necessary condition for being able to evaluate the original polynomial accurately.

This process, which we will refer to as {\em pruning,\/}  will consist of deleting some 
vertices and edges and redirecting certain other edges in the DAG that represents the 
algorithm. 
We explain the pruning process informally and through an example; for the rigorous definition, see \cite{focm}.

Starting at the sources, we process each node provided that both of its inputs have been processed already (acyclicity insures that this can be done). Then, at any node $u$ which performs an addition or subtraction of two inputs from nodes $v$ and $w$ of different degrees, we delete the node and the in-edge from the input of smaller degree (say $v$) and redirect the out-edge from $u$ to $w$ (the node with the larger degree output). Then we go backward and delete all nodes and/or edges on that sub-DAG, up to the source nodes.
We denote the output of the pruned algorithm by  $p_{dom_j,C, comp}(x,\delta)$.
\vskip 0.1cm

We illustrate this process below.

\vspace{.5cm}

\begin{tabular}{lc}
\multicolumn{2}{c} {\epsfxsize=8.0cm \epsfbox{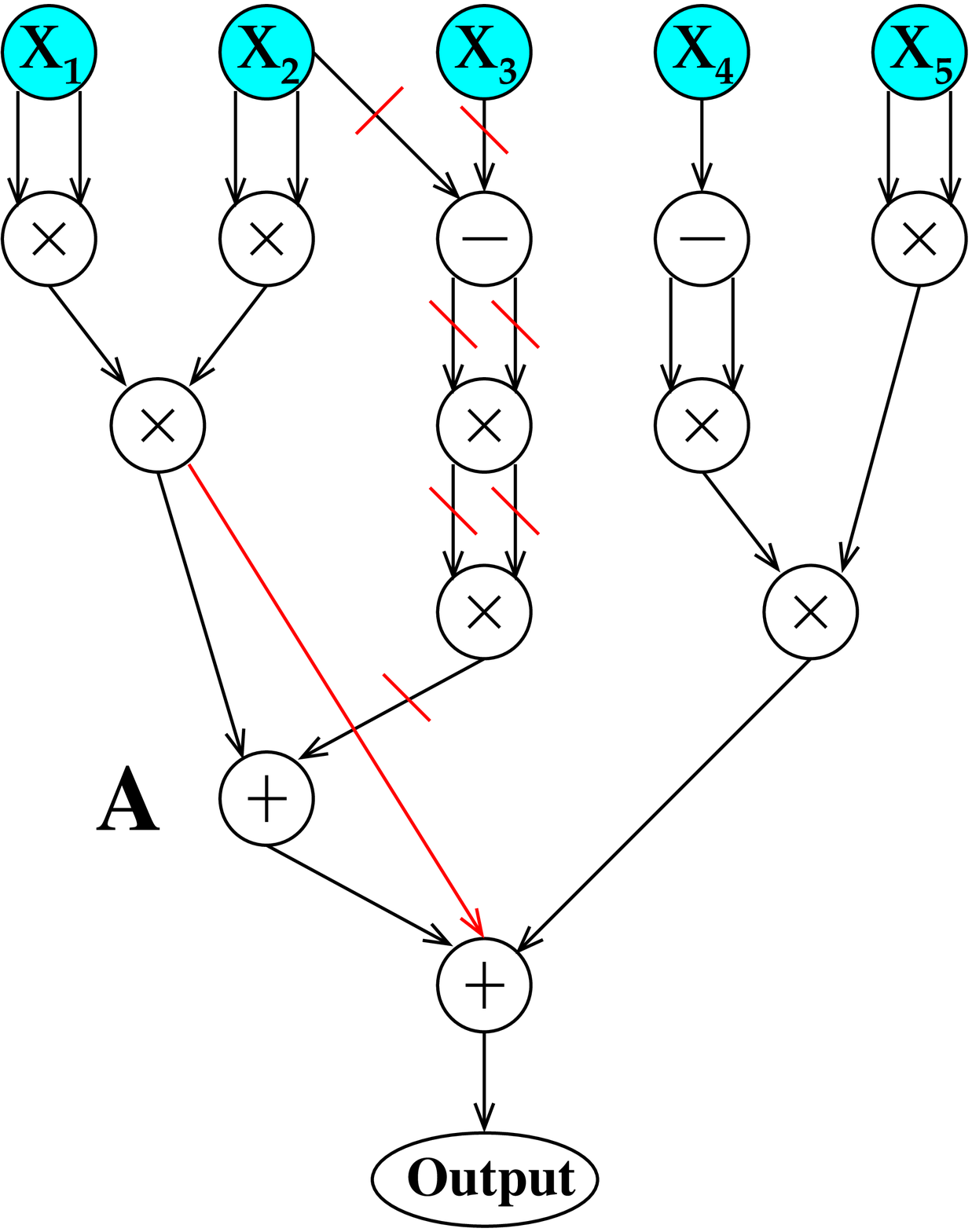}} \\ \\
\hspace{-1cm} Figure 6: & Pruning an algorithm for $p(x)= x_1^2 x_2^2 + (x_2-x_3)^4+(x_3-x_4)^2x_5^2 $.  \\ \\
\end{tabular}

\begin{example} Figure~6 shows an example of pruning an algorithm that evaluates the 
polynomial  $$   x_1^2 x_2^2 + (x_2-x_3)^4+(x_3-x_4)^2x_5^2  $$
using the substitution $$  (tx_1,x_2,tx_3+x_2, tx_4+x_2,x_5) $$
near the component $$ x_1=0, \;\; x_2=x_3=x_4.$$

\noindent The result of pruning is an algorithm that
evaluates the dominant term $$ x_1^2x_2^2+(x_3-x_4)^2x_5^2.$$ 
The node $A$ has two sub-DAGs leading to it; the right one (going back to the sources $x_2$ and $x_3$) is pruned due to the fact that it computes $(x_2 - x_3)^4$, a quantity of order $O(t^4)$, whereas the other produces $x_1^2 x_2^2$, a quantity of order $O(t^2)$. 

The output of the original algorithm is given by  
\begin{eqnarray*}
p_{comp}(x, \delta) &=& 
 \left [  \left(   x_1^2(1+\delta_1)x_2^2(1+\delta_2) (1+\delta_3) \right. \right.
\\
& & \left. + (x_2-x_3)^4(1+\delta_4)^4(1+\delta_5)^2(1+\delta_6)\right ](1+\delta_7)  \\
&&  + \left [(x_3-x_4)^2(1+\delta_8)^2(1+\delta_9)x_5^2(1+\delta_{10})(1+\delta_{11})\right ] (1+\delta_{12}).
\end{eqnarray*}
The output of the pruned algorithm is
\begin{eqnarray*}
p_{dom_j, C, comp}(x, \delta) &=&  \left [  x_1^2x_2^2 (1+\delta_1)(1+\delta_2) (1+\delta_3))(1+\delta_7)  + (x_3-x_4)^2 x_5^2 \right.\\
& & 
\left. \times (1+\delta_8)^2(1+\delta_9)(1+\delta_{10})(1+\delta_{11})\right]
(1+\delta_{12}).
\end{eqnarray*} \end{example}

We formalize the main result regarding the pruning process below. 
%The pruning process always produces an algorithm that accurately evaluates the dominant term corresponding to the ``slice of space'' near the variety defined by the change of variables. 

\begin{theorem} \label{p=>pdom}  Suppose a non-branching algorithm evaluates
a polynomial $p$ accurately on $\Rr^n$ by computing $p_{comp}(x,\delta)$. Suppose
$C$ is a standard change of variables~(as in Definition \ref{change*}) associated with an
irreducible component of $V(p)$. Let $p_{dom_j,C}$ be one of the corresponding
dominant terms of $p$ and let $S_{\Lambda_j}$ satisfy~(\ref{exp_cond}). Then the 
pruned algorithm 
with output  $p_{dom_j,C, comp}(x,\delta)$ evaluates $p_{dom_j,C}$ accurately on $\Rr^n$. 
In other  words, being able to compute all such $p_{dom_j,C}$ for all components of the 
variety $V(p)$  and all standard changes of variables $C$ accurately is a  necessary condition for computing $p$ accurately.
\end{theorem}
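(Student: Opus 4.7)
The plan is to substitute the parameterized path $x(t)$ of Lemma~\ref{leading} into the accuracy bound for the original algorithm and extract the $t^{d_j}$-coefficient. Fix $x^0 \in \Rr^n$ and a vector $\eta = (n_1,\ldots,n_{k_l}) \in S_{\Lambda_j}$ satisfying~(\ref{exp_cond}). Along the path $x(t)$ one has $p(x(t)) = t^{d_j}\, p_{dom_j, C}(x^0) + o(t^{d_j})$ by Lemma~\ref{leading}, so $|p(x(t))|$ generically vanishes like $t^{d_j}$ as $t \to 0$. Expand the computed value as a polynomial in $t$: $p_{comp}(x(t),\delta) = \sum_{i \geq 0} t^i\, G_i(x^0, \delta)$, with each $G_i$ a polynomial in $x^0$ and $\delta$.

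The first key observation is that the accuracy of $p_{comp}$ pins down the $t$-order of $p_{comp}(x(t),\delta)$. The bound $|p_{comp}(x(t),\delta) - p(x(t))| \leq \eta\,|p(x(t))|$ forces $|p_{comp}(x(t),\delta)| = O(t^{d_j})$ uniformly in $t \in (0,1]$ for all $|\delta_i| \leq \varepsilon$; since $p_{comp}(x(t),\delta)$ is polynomial in $t$, this uniform bound as $t \to 0$ implies $G_i(x^0,\delta) \equiv 0$ identically in $\delta$ for every $i < d_j$. Dividing the accuracy bound by $t^{d_j}$ and passing to $t \to 0$ then yields $|G_{d_j}(x^0,\delta) - p_{dom_j, C}(x^0)| \leq \eta\,|p_{dom_j, C}(x^0)|$: the leading $t^{d_j}$-coefficient is itself an accurate evaluator for $p_{dom_j, C}$.

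The remaining step is to identify $G_{d_j}(x^0,\delta)$ with (or a controlled perturbation of) the output of the pruned algorithm. I would proceed by induction on the nodes of the DAG in topological order, tracking for each surviving node the leading-$t$ part of the substituted computation. Each source contributes its own $t$-leading term; at a multiplication node, the leading coefficients simply multiply; at an addition/subtraction node whose two inputs have strictly different $t$-orders, only the smaller-order input contributes to the leading-$t$ part, which is precisely what the pruning step retains. When the two inputs share a $t$-order (namely, for monomials in $\Lambda_j$), both survive pruning and their sum or difference, together with any internal cancellation, appears verbatim in $G_{d_j}$.

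The principal obstacle is the bookkeeping in this final step: at a pruned addition node, the $(1+\delta_u)$ rounding factor that the original algorithm applies to the enclosing sum is inherited by the retained summand in the pruned algorithm, whereas in the clean extraction of $G_{d_j}$ this factor acts only once on the surviving leading coefficient. Consequently $p_{dom_j, C, comp}(x^0,\delta)$ will coincide with $G_{d_j}(x^0,\delta')$ only after a polynomial relabeling $\delta \mapsto \delta'$ of the rounding errors, with $|\delta'_i| = O(\varepsilon)$. Condition~(\ref{exp_cond}) on the weights, together with the block structure of the standard change of variables $C$, guarantees strict separation of $t$-orders at every pruned node, so the required relabeling stays bounded. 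Once the identification $p_{dom_j, C, comp}(x^0,\delta) = G_{d_j}(x^0,\delta')$ up to this admissible renaming is established, the accuracy bound for $G_{d_j}$ transfers directly to the pruned algorithm and completes the proof.
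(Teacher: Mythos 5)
Your overall strategy is sound and matches the natural route: parametrize along the scaled path $x(t)$ of Lemma~\ref{leading}, use accuracy of the original algorithm to kill the $t^i$-coefficients of $p_{comp}(x(t),\delta)$ for $i<d_j$, conclude that the $t^{d_j}$-coefficient $G_{d_j}(x^0,\delta)$ accurately evaluates $p_{dom_j,C}$, and then identify $G_{d_j}$ with the pruned output. Steps one through three are rigorous as written: the polynomial-in-$t$ bound $|p_{comp}(x(t),\delta)|\leq (1+\eta)|p(x(t))| = O(t^{d_j})$ does force $G_i \equiv 0$ for $i<d_j$, and dividing by $t^{d_j}$ and letting $t\to 0$ gives $|G_{d_j} - p_{dom_j,C}| \leq \eta |p_{dom_j,C}|$.

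The real gap is in the final identification, and you have mislocated where the difficulty lies. You flag a ``relabeling of $\delta$'' at pruned addition nodes as the principal obstacle, but that is a non-issue: as the paper's worked example shows, the rounding factor $(1+\delta_7)$ of the pruned addition node survives in $p_{dom_j,C,comp}$, so $G_{d_j}$ and $p_{dom_j,C,comp}$ contain exactly the same $\delta$'s with no renaming. The genuine obstacle is that your proposed topological-order induction, tracking ``the leading-$t$ part'' of each surviving node, fails already at the sources. In the paper's example $x_3(t) = t\tilde y_3 + \tilde y_2$, the leading-$t$ coefficient of $x_3(t)$ is $\tilde y_2$, whereas the pruned algorithm reads the full source $x_3 = \tilde y_3 + \tilde y_2$; these disagree. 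The agreement is only recovered after the block subtractions $x_r - \sigma_r x_{k_m+1}$ cancel the order-$n_{k_m+1}$ parts, and that cancellation is not captured by ``leading coefficients simply multiply/add.'' Moreover, the $t$-order of an intermediate node's output as a polynomial in $(t,\tilde y,\delta)$ can be strictly smaller than its $t$-order at $\delta = 0$ (e.g., two separately rounded monomials of equal $\delta=0$ order whose leading terms cancel, leaving a $(\delta_a - \delta_b)$ residual), so the pruning, which is determined by the $\delta=0$ degrees, need not coincide node-by-node with the leading-coefficient bookkeeping. What actually makes the identification go through is the constraint, already established in step three, that the \emph{global} output has $G_i \equiv 0$ for $i<d_j$; this has to be fed back into the DAG analysis to show that the residuals just described cannot reach the output at order below $d_j$. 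Your sketch never uses this constraint in the DAG induction, so the identification $G_{d_j} = p_{dom_j,C,comp}$ is asserted rather than proved. You should either track the full polynomial-in-$t$ at each node and extract the $t^{d_j}$-coefficient at the output, carefully showing that the pruned DAG reproduces exactly that coefficient, or argue more structurally that accuracy forces every node whose $\delta = 0$ $t$-order is strictly below $d_j$ to contribute nothing to $G_{d_j}$; as written, the step is a gap.

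Finally, the role of condition~(\ref{exp_cond}) should be stated more precisely: it ensures that within each block of the standard change of variables, the substituted source variables $x_r(t) = t^{n_r}\tilde y_r + \sigma_r t^{n_{k_m+1}}\tilde y_{k_m+1}$ all have a well-defined common leading $t$-order $n_{k_m+1}$ (rather than generically ``guaranteeing strict separation of $t$-orders at pruned nodes'').
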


\subsubsection{Sufficiency of evaluating dominant terms}  \label{sec_suff}

Our next goal is to prove a converse to Theorem~\ref{p=>pdom}; however, strictly
speaking, the results that follow do not provide a true converse, since branching
is needed to construct an algorithm that evaluates a polynomial $p$ accurately
from algorithms that evaluate its dominant terms accurately. Recall that Theorem~\ref{p=>pdom} involves non-branching algorithms.

We make two assumptions: that our polynomial $p$ is homogeneous and irreducible. The latter assumption effectively reduces the problem to that of accurate evaluation of a nonnegative polynomial, due to the following lemma.

\begin{lemma} If a polynomial $p$ is irreducible and has an allowable variety $V(p)$,
then it is either a constant multiple of a linear form that defines an allowable
hyperplane, or it does not change its sign in $\Rr^n$.
\end{lemma}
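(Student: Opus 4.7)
The plan is to prove the contrapositive of the non-trivial direction: assuming $p$ is irreducible with allowable $V(p)$ and $p$ changes sign on $\Rr^n$, show that $p$ is a constant multiple of one of the linear forms $x_k$, $x_k + x_m$, or $x_k - x_m$. The strategy proceeds in three stages: a dimension argument to pin down $V_{\Rr}(p)$, a combinatorial argument using allowability, and a standard divisibility conclusion.

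First I would handle trivial cases: a nonzero constant $p$ has constant sign, and $p \equiv 0$ is not irreducible, so we may assume $p$ is non-constant. The key dimension claim is then that a sign change forces $\dim V_{\Rr}(p) = n - 1$. For $n = 1$ this is just the intermediate value theorem. For $n \geq 2$, I would argue by contradiction: if $\dim V_{\Rr}(p) \leq n - 2$, then $\Rr^n \setminus V_{\Rr}(p)$ is path-connected by the standard codimension-two connectedness fact for complements of real algebraic sets in Euclidean space, so the continuous function $p$ would have constant sign on this complement, contradicting the assumed sign change.

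Next I would use the allowable structure $V(p) = \bigcup_i \bigcap_j H_{ij}$, where each $H_{ij}$ is one of $Z_k$, $S_{k\ell}$, or $D_{k\ell}$. Since any intersection of two or more distinct allowable hyperplanes has real dimension at most $n - 2$, the only $(n-1)$-dimensional components of $V(p)$ are the individual allowable hyperplanes that appear as single-term intersections in the union. Combined with the dimension bound from the previous paragraph, at least one allowable hyperplane $H = \{L = 0\}$, with $L \in \{x_k, \, x_k + x_m, \, x_k - x_m\}$, must be contained in $V_{\Rr}(p)$.

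Finally, I would conclude by a linear-change-of-variables divisibility argument. Choose new coordinates in which $L$ becomes a single coordinate $y_k$; since $p$ vanishes identically on $\{y_k = 0\}$, expanding $p$ as a polynomial in $y_k$ with coefficients in $\Rr[y_1, \ldots, \widehat{y_k}, \ldots, y_n]$ shows that the constant term (in $y_k$) must be the zero polynomial, hence $y_k \mid p$, hence $L \mid p$ in the original coordinates. Irreducibility of $p$ then forces $p = cL$ for some $c \in \Rr \setminus \{0\}$, and $L$ by construction defines an allowable hyperplane. The subtlest step will be the codimension-two connectedness used to upgrade a sign change into a top-dimensional real zero locus; the rest of the argument is essentially formal once this is in hand.
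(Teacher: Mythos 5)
Your proof is correct. Since the paper defers the rigorous proof to \cite{focm} and only states the lemma, I cannot compare your route to the one used there, but the argument you give is sound and self-contained. The three-step structure is the natural one: (i) a sign change forces a real zero set of codimension exactly one, because a closed real algebraic set of dimension at most $n-2$ has connected complement in $\Rr^n$, on which the continuous nonvanishing $p$ would then have constant sign; (ii) since an allowable $V(p)$ is a finite union of intersections of allowable hyperplanes, and any intersection of two or more distinct such hyperplanes has dimension at most $n-2$, the only way for $V_{\Rr}(p)$ to attain dimension $n-1$ is for some single allowable hyperplane $H=\{L=0\}$ to appear as a whole term of the union, so $H\subseteq V_{\Rr}(p)$; and (iii) vanishing on all of $H$ gives $L\mid p$ by the standard coordinate-change argument, whence irreducibility forces $p=cL$, and $L$ is an allowable linear form by construction. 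The one step that deserves the care you flag is the codimension-two connectedness fact; it is indeed standard for real algebraic (more generally, semialgebraic) sets, for instance via stratification into smooth submanifolds and a transversality argument showing a generic arc between two points of the complement misses the low-dimensional set. It is also worth noting explicitly, as you implicitly do, that no term of the allowable union can be the empty intersection (which would be all of $\Rr^n$), since that would force $p\equiv 0$, contradicting irreducibility; with that observation the dimension bookkeeping is airtight.
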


Hence, we can restrict ourselves to the case of a homogeneous, irreducible, non-negative polynomial over the entire $\mathbb{R}^n$. For this case, we have the following theorem.

\begin{theorem} \label{pdom=>p} Let $p$ be a homogeneous nonnegative polynomial 
whose variety $V(p)$ is allowable. Suppose that all dominant terms $p_{dom_j,C}$ 
for all components of the variety $V(p)$, all standard changes of variables $C$ 
and all subsets $\Lambda_j$ satisfying~(\ref{exp_cond}) are accurately evaluable. 
Then there exists a branching  algorithm that evaluates $p$ accurately over $\Rr^n$.  
\end{theorem}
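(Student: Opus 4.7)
The plan is to build a branching algorithm that partitions $\mathbb{R}^n$ into finitely many semi-algebraic regions by means of monomial comparisons, and on each region dispatches to one of two kinds of accurate subroutines. On regions where $|p(x)|$ is bounded below by a constant fraction of $\|x\|^d$ (with $d=\deg p$), we apply the Horner-like scheme of Theorem~\ref{thm_positive_homo}. On regions close to a particular component of the variety $V(p)$, we apply the assumed accurate algorithm for an appropriately chosen facet-indexed dominant term $p_{dom_j,C}$. Since $p$ is homogeneous it suffices to design the cover on the compact set $S\eqbd\{x\in\mathbb{R}^n:\|x\|_\infty=1\}$ and extend by the usual scaling trick embedded inside a Horner-like evaluation.

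First I would construct the cover. Fix a scale $\rho>0$, to be tuned against the target relative error $\eta$, and set $R_0\eqbd\{x\in S:|p(x)|\geq\rho\}$; compactness of $S$ together with Theorem~\ref{thm_positive_homo} supplies an accurate subroutine on the homogeneous cone over $R_0$. The complement $S\setminus R_0$ is a neighborhood (in $S$) of $V(p)$, to be decomposed component by component. For each irreducible component $W$ of $V(p)$, pick a standard change of variables $C_W$ putting $W$ into the canonical form $x_1=\cdots=x_k=0$, and cover a cone neighborhood of $W$ by the slices $\widetilde F_{\Lambda_j}$ of Lemma~\ref{true_dom}, enlarging the cones $\widetilde S_{\Lambda_j}$ so that they collectively exhaust $\mathbb{R}^k_+\setminus\{0\}$ -- this is possible because the rays $S_{\Lambda_j}$ corresponding to facets of the Newton polytope are the extremal rays of $-N(P)\cap\mathbb{R}^k_+$, while exponent condition~(\ref{exp_cond}) selects a compatible representative from each orbit under the change of variables. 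Each enlarged slice, intersected with the cone neighborhood of $W$, becomes a region $R_i$ to which the assumed accurate subroutine for $p_{dom_{j_i},C_i}$ is attached.

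The accuracy bound then follows from the triangle inequality: on $R_i$, Lemma~\ref{true_dom} yields $|p_{dom_{j_i},C_i}(x)/p(x)-1|<\delta_i$ once $\|x_{[1:k]}\|/\|x_{[k+1:n]}\|$ is small enough (a condition enforced by the cone structure together with the choice of $\rho$), and the assumed subroutine returns $\hat q(x)$ with small relative error $\eta'$ against $p_{dom_{j_i},C_i}(x)$; combining yields a relative error of at most $\eta'(1+\delta_i)+\delta_i$ against $p(x)$, which is $\leq\eta$ for sufficiently small machine epsilon. Region selection is done by comparing monomials in $|x_i|$ and $|x_i\pm x_j|$, which are accurately computable using only $+,-,\times$ and sign tests; each slice is defined by linear inequalities in $(-\log|x_i|)_{i=1}^k$, and hence by inequalities of the form $\prod|\cdot|^{a}\lessgtr\prod|\cdot|^{b}$ in the original coordinates.

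The main obstacle is the geometry of the cover: proving that, for each variety component $W$, the facet-indexed slices can be enlarged -- consistently across all components and across the chosen changes of variables $C_W$ -- so that their union together with $R_0$ exhausts $S$, while preserving the hypotheses of Lemma~\ref{true_dom}, in particular $V(p_{dom_j,C})\cap\widetilde F_{\Lambda_j}=\{0\}$. This requires simultaneous calibration of $\rho$, the enlargements $\widetilde S_{\Lambda_j}$, and the slice-interface thresholds, relying on compactness of $S$, finiteness of the irreducible decomposition of $V(p)$, and finiteness of the facets of each relevant Newton polytope. A secondary subtlety is the handling of points close to several components of $V(p)$ simultaneously: there one must choose \emph{some} component to associate $x$ to, following a consistent rule expressible through comparisons of monomials. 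Irreducibility of $p$ and the fact (Lemma in Section~\ref{sec_suff}) that $p$ either defines an allowable hyperplane or does not change sign on $\mathbb{R}^n$ ensure that this last ambiguity does not spoil accuracy, since $p$ has a single sign on each connected component of $\mathbb{R}^n\setminus V(p)$ and the competing subroutines must then agree up to the common relative error budget.
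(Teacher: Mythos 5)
Your proposal follows essentially the same route as the paper's proof: branch on whether the input is far from $V(p)$ (invoking the Horner-like scheme of Theorem~\ref{thm_positive_homo} on the region where $|p|$ is bounded below on the unit sphere) or near some irreducible component (invoking the assumed accurate subroutine for a facet dominant term $p_{dom_j,C}$ on the slices $\widetilde F_{\Lambda_j}$ guaranteed by Lemma~\ref{true_dom}, with region selection done via monomial comparisons and the final accuracy bound following from the triangle inequality). The cover-construction and overlap issues you flag as the main obstacles are likewise left informal in the paper's proof sketch, so your exposition matches, and in places slightly sharpens, the source.
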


\vspace{.25cm}

\begin{proof}{Theorem \ref{pdom=>p}} We first show how to evaluate $p$ accurately in a neighborhood of each irreducible
component of its variety $V (p)$. We next evaluate $p$ accurately off these neighborhoods of $V (p)$. The
final algorithm will involve branching depending on which region the input belongs to, and the
subsequent execution of the corresponding subroutine.

Consider a particular irreducible component $V_0$ of the variety $V (p)$; using a standard change
of variables $C$, we map $V_0$ to a set of the form $\tilde{x}_1 = \cdots = \tilde{x}_k = 0$. We create an $\epsilon$-neighborhood
of $V_0$ where we can evaluate $p$ accurately; this neighborhood is built up from semi-algebraic $\epsilon$-neighborhoods. More precisely, for each $V_0$, we can find a collection $(S_j)$ of semi-algebraic sets, all determined by
polynomial inequalities with integer coefficients, and the corresponding numbers $\epsilon_j$, so that the polynomial $p$ can be evaluated with desired accuracy $\eta$ in each $\epsilon_j$ -neighborhood of $V_0$ within the
piece $S_j$. Moreover, testing whether a particular point $x$ is within $\epsilon_j$ of $V_0$ within $S_j$ can be done
by branching based on polynomial inequalities with integer coefficients.

The final algorithm will be organized as follows. Given an input $x$, determine by branching
whether $x$ is in $S_j$ and within the corresponding $\epsilon_j$ of a component $V_0$. If that is the case, evaluate $p(x)$ using the algorithm that is accurate in $S_j$ in that neighborhood of $V_0$. For $x$ not in any of
the neighborhoods, evaluate $p$ by Horner's rule. Since the polynomial $p$ is strictly positive off the neighborhoods of the components of its variety, the reasoning of Section \ref{sec_PositivePolys} applies, showing that the
Horner's rule algorithm is accurate. If $x$ is on the boundary of a set $S_j$, any applicable algorithm
will do, since the inequalities we use are not strict. Thus the resulting algorithm for evaluating $p$
will have the desired accuracy $\eta$.
 \end{proof}

\subsubsection{Obstacles to a complete inductive procedure} \label{obst}

The results of the previous sections suggest the existence of an inductive procedure that could be used to determine whether or not a given polynomial is accurately evaluable 
by reducing the problem for the original polynomial $p$ to the same problem for
its dominant terms, then their dominant terms, and so forth, going all the way to ``base'' cases: monomials or other polynomials that are easy to analyze. In order to work, the dominant terms would have to be simpler, or smaller, by some measure, than the original polynomial; this would require finding an induction variable that gets reduced at each step.

The most obvious two choices are the number of variables or the degree of the
polynomial under consideration; unfortunately, there are cases when both fail to decrease. Furthermore, the dominant term may even coincide with the
polynomial itself. For example, if $$p(x)=A(x_{[3:n]}) x_1^2 +B(x_{[3:n]})
 x_1 x_2 +C(x_{[3:n]})x_2^2$$ where $A$, $B$, $C$ are nonnegative polynomials 
in  $x_3$ through $x_n$, then the only useful dominant term of $p$ in the neighborhood
of the set $x_1=x_2=0$ is the polynomial $p$ itself. For this case, analyzing the dominant term yields no progress whatsoever.

Another possibility is induction on domains or slices of space, but we do not yet envision 
how to make this idea precise, since we do not know exactly when a given 
polynomial is accurately evaluable on a given domain. 

Further work to establish a full decision procedure is therefore highly desirable. 

\subsection{Extended arithmetic} \label{sec_extended}

In this section, we consider adding ``black-box'' real or complex polynomial operations to the basic, traditional model. We describe this type of operations below.

\begin{definition} We call a black-box operation any type of operation that takes a number of inputs (real or complex) $x_1, \ldots, x_k$ and produces an output $q$ such that $q$ is a polynomial in $x_1, \ldots, x_k$. \end{definition}

\begin{example} $q(x_1, x_2, x_3) = x_1 + x_2 x_3$.
\end{example}

Note that $+, -$, and $\cdot$ are all black-box operations on two inputs.

Consider a fixed set of multivariate polynomials $\{ q_j : j\in J\}$ with real or complex inputs (perhaps infinite). In the extended arithmetic model, the operations allowed are the black-box operations $q_1, \ldots, q_k$, and negation. With the exception of negation, which is exact, all the others yield $rnd(op(a_1, \ldots, a_l)) = op(a_1, \ldots, a_l)(1+\delta)$, with $|\delta|< \epsilon$ ($\epsilon$ here is the machine precision). We consider the same arithmetical models as in Section~\ref{sec_models}, with this extended class of operations.

\subsubsection{Necessity: real and complex}

In order to analyze the way in which the necessity condition for having an allowable variety (Theorem \ref{conj}) changes under these extended assumptions, we need to introduce a new, more general definition of allowability.

Essentially, a black box for computing $p$ can be used for computing other polynomials, namely all the polynomials obtainable from $p$ via permuting, repeating, negating, and zeroing some subset of the variables. Therefore each black box accounts for a potentially larger set of polynomials that can be evaluated with {\sl a single\/} rounding error, using that black box, and we must consider all of them in our analysis. Note that in the traditional case (when we had addition, subtraction, and multiplication of two numbers as our black boxes) our set of three operations was closed under the aforementioned changes. 

The definition below formalizes the set of polynomials obtainable from a given one, through this process of negation, repetition, permutation, and zeroing of variables.

Recall that we denote by $\S$ the space of variables (which may be either $\Rr^n$ or $\C^n$). From now on we will denote the set $\{1, \ldots, n\}$ by $\k$, and the set of pairs $(i,j) \in \k \times \k$ such that $i<j$ by $\k^2_{<}$.

\begin{definition} \label{all_subv}
Let $p(x_1, \ldots, x_n)$ be a multivariate polynomial over $\S$ with variety $V(p)$. Let $\k_Z \subseteq \k$, and let $\k_D, \k_S \subseteq \k^2_{<}$ . Modify $p$ as follows: impose conditions of the type $Z_i$ for each $i \in \k_Z$, and of type $D_{ij}$, respectively $S_{ij}$, on all pairs of variables in $\k_D$, respectively $\k_S$. Rewrite $p$ subject to those conditions (e.g., set $X_i = 0$ for all $i \in \k_Z$), and denote it by $\tilde{p}$, and denote by $\k_R$ the set of remaining independent variables (use the convention which eliminates the second variable in each pair in $\k_D$ or $\k_S$).

Choose a set $T \subseteq \k_R$, and let
$$ V_{T, \k_Z, \k_D, \k_S} (p) = \cap_{\alpha} V(q_{\alpha})~, $$
where the polynomials $q_{\alpha}$ are the coefficients of the expansion of $\tilde{p}$ in the variables $x_T$:
$$ \tilde{p} (x_1, \ldots, x_k) = \sum_{\alpha} q_{\alpha} x_{T}^{\alpha}~, $$
with $q_{\alpha}$ being polynomials in $x_{\k_R \setminus T}$ only.

Finally, let $\k_N$ be a subset of $\k_R \setminus T$. We negate each variable in $\k_N$, and let $V_{T, \k_Z, \k_D, \k_S, \k_N}(p)$ be the variety obtained from $V_{T, \k_Z, \k_D, \k_S}(p)$, with each variable in $\k_N$ negated. 
\end{definition}

\vspace{.25cm}

%Note that $V_{\emptyset, \emptyset, \emptyset, \emptyset, \emptyset}(p) = V(p)$. 
For simplicity, we denote a set $(T, \k_Z, \k_D, \k_S, \k_N)$ by 
$\ii$.

We illustrate this process by the following example. 

\begin{example} Let $p(x,y,z) = x+y \cdot z$ (the fused multiply-add). We
record below some of the possibilities for 
%$\ii = (T, \k_Z, \k_D, \k_S, \k_N)$,
%together with 
the subvarieties $V_{\ii}(p)$; the sets $\ii~=~(T,\k_Z,\k_D,\k_S,\k_N)$ are implicit.

%Without loss of generality, assume that we have eliminated all redundant
%or complicated conditions, like $(x,y) \in \k_D$ and $(x, y) \in \k_S$
%(which immediately leads to $x=y=0$, that is, $x, y, \in \k_Z$). We assume
%thus that all variables not present in $\k_Z$ cannot be deduced to be $0$
%from conditions imposed by $\k_D$ or/and $\k_S$.

%We obtain that all possibilities for $V_{\ii}(p)$ are, up to a permutation
%of the variables,
\begin{enumerate}
\item[{\mathversion{bold} $\diamond$}] $V(p(x, 0, z)) ~~=  \{x=0\}$,
\item[{\mathversion{bold} $\diamond$}] $V(p(x,x,x)) ~= \{x= 0\} \cup \{x = -1\}$,
\item[{\mathversion{bold} $\diamond$}] $V(p(0, y, z)) ~~= \{y = 0\} \cup \{z=0\}$,
\item[{\mathversion{bold} $\diamond$}] $V(p(x, y, -x)) = \{x = 0\} \cup \{y=1\}$,
%\item[{\mathversion{bold} $\diamond$}] $V(p(x, -x, x)) = \{x = 0\} \cup \{x= 1\}$, 
\item[{\mathversion{bold} $\diamond$}] $V(p(x, y, y)) ~~= \{x +y^2 = 0\}$,
\item[{\mathversion{bold} $\diamond$}] $V(p(x, y, -z)) = \{x -yz = 0\}$, etc.
\end{enumerate} \end{example}

We include the ``traditional'' operations in the arithmetic by defining $q_{-2}(x_1, x_2) = x_1 x_2$, $q_{-1}(x_1, x_2) = x_1+x_2$, and $q_0(x_1, x_2) = x_1 - x_2$, and note that the sets 
\begin{eqnarray} \label{q-unu}
& 1.&  Z_i =\{x~:~ x_i ~=~ 0\}~, \\
\label{q-doi}
& 2.&  S_{ij} = \{x~:~ x_i+x_j ~=~ 0\}~, \\
\label{q-trei}
& 3.&  D_{ij} = \{x~:~ x_i - x_j ~=~ 0\} 
\end{eqnarray}
 describe all non-trivial sets of type $V_{\ii}$, for $q_{-2}, ~q_{-1}$, and $q_0$.

\vspace{.25cm}

We will assume from now on that the black-box operations $q_j$ with $j \in J$ ($J$ may be infinite, and $\{-2, -1, 0\} \subset J$) are given and fixed.

\vspace{.25cm}

\begin{definition} \label{q_allow} 
We call any set $V_{\ii}(q_j)$ with $\ii= (T, \k_Z, \k_D, \k_S, \k_N)$ as defined above and $q_j$ a black-box operation
%if it is representable as an intersection of $V_{\ii}$s, i.e.,  
%\[
%Q = \cap_{j, \ii} ~~V_{\ii} (q_j)~,
%\]
{\sl basic $q$-allowable}.

We call any set $R$ {\sl irreducible $q$-allowable} if it is an irreducible component of a (finite) intersection of basic $q$-allowable sets, i.e., when $R$ is irreducible and 
\[ 
R \subseteq \cap_{l} ~Q_l~,
\]
where each $Q_l$ is a basic $q$-allowable set.

We call any set $Q$ {\sl $q$-allowable} if it is a (finite) union of irreducible $q$-allowable sets, i.e.,\[
Q = \cup_{j} R_j~,
\]
where each $R_j$ is an irreducible $q$-allowable set. 

Any set $R$ which is not $q$-allowable we call $q$-unallowable. 
\end{definition}

Note that the above definition of $q$-allowability is closed under taking union, intersection, and irreducible components. This parallels the definition of allowability for the classical arithmetic case -- in the classical case, every allowable set was already irreducible (being an intersection of hyperplanes).

\begin{definition}
Given a polynomial $p$ with $q$-unallowable variety $V(p)$, consider 
all sets $W$ that are $q$-allowable (as in Definition~{\sl\ref{q_allow}}), and subtract from $V(p)$ those $W$ for which $W \subset V(p)$. 
We call the remaining subset of the variety {\em points in general position\/} and denote it by $\G(p)$.
\end{definition}

Since $V(p)$ is $q$-unallowable, $\G(p)$ is non-empty.

\begin{definition}
Given $x\in \S$, define the set $\qallow(x)$ as the intersection of
all basic $q$-allowable sets going through $x$:
$$ \qallow(x) \eqbd \cap_{j \in J}  \left( \cap_{\ii~:~ x\in V_{\ii}(q_j)} ~~V_{\ii}(q_j) \right) , $$
for all possible choices of $\ii$. The intersection in parentheses is $\S$  whenever $x \notin V_{\ii}(q_j)$ for all $\ii$.
\end{definition}

Note that when $x \in \G(p)$, $\qallow(x) \not\subseteq \G(p)$.

We can now state our necessity condition.

\begin{theorem} \label{gen_result} Given the black-box operations $\{q_j: j \in J\}$, and the model of arithmetic described above, let $p$ be a polynomial defined over a domain $\mathcal{D} \subset \S$. Let $\mathcal{G}(p)$ be the set of points in general position on the variety $V(p)$. 
If there exists $x \in \mathcal{D} \cap \mathcal{G}(p)$ such that $\qallow(x) \cap \Int(\mathcal{D}) \neq \emptyset$, then $p$ is not accurately evaluable on $\mathcal{D}$. \end{theorem}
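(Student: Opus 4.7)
\begin{proofof}{Theorem \ref{gen_result} (plan)}
The plan is to follow the strategy of the proof of Theorem~\ref{conj}, adapted to the extended arithmetic setting. I would represent the algorithm $\mathcal{A}$ as a DAG whose internal nodes each apply some black-box operation $q_j$ (with $j \in J$) to a tuple of inputs and then multiply by $(1+\delta_u)$. Fix a point $x \in \Int(\mathcal{D}) \cap \G(p)$ such that $\qallow(x) \cap \Int(\mathcal{D})$ is nonempty. Since $x \in \G(p)$ implies $\qallow(x) \not\subseteq V(p)$, and since $\qallow(x)$ is a finite intersection of varieties $V_{\ii}(q_j)$ so that $V(p) \cap \qallow(x)$ is a proper subvariety of $\qallow(x)$, the open subset $\Int(\mathcal{D})$ must meet $\qallow(x) \setminus V(p)$; pick any such witness $y$. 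For simplicity I treat the non-branching case and note that (as discussed in Section~\ref{sec_models}) branching adds no power in the nondeterministic model.

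Since $p(x) = 0$ and the algorithm is assumed accurate, the relative error bound forces $p_{comp}(x,\delta) \equiv 0$ as a polynomial in the rounding-error vector $\delta$ on the box $|\delta_i| \leq \epsilon$. Call a node of the DAG \emph{trivial at $x$} if its output, as a polynomial in $\delta$ with $x$ substituted, is identically zero; otherwise call it \emph{non-trivial at $x$}, in which case its output vanishes only on a measure-zero $\delta$-set. The output node is trivial at $x$. Working backwards through the DAG, I would propagate this label inductively: whenever a node $u$ computing $q_j(v_1,\ldots,v_k)(1+\delta_u)$ is trivial at $x$, the composition $q_j(v_1(\delta),\ldots,v_k(\delta))$ must vanish identically in $\delta$.

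The central structural step, which I expect to be the main obstacle, is to translate this identical vanishing into membership of the source values in a basic $q$-allowable set, so that ultimately the tuple of source values visible to $\mathcal{A}$ at $x$ lies in a $q$-allowable set containing $x$, hence in $\qallow(x)$. Concretely, I would partition the input slots $\{1,\ldots,k\}$ of each trivial node $u$ according to their provenance: let $T$ be the slots fed by non-trivial sub-DAGs (their values are ``free'' polynomials in fresh $\delta$'s), let $\k_Z$ be those fed either by an exactly zero source or by a trivial sub-DAG (by the inductive hypothesis these contribute zero identically), and let $\k_D$, $\k_S$, $\k_N$ record the equalities, sign-flipped equalities, and negations among the remaining slots that are forced by the graph structure. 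Then $q_j$ restricted to this pattern becomes a polynomial $\tilde q_j$ in the source variables appearing, and identical vanishing over the free $T$-inputs means exactly that those source values lie in $V_{T,\k_Z,\k_D,\k_S,\k_N}(q_j)$ in the sense of Definition~\ref{all_subv}. This is precisely a basic $q$-allowable constraint. Careful bookkeeping is needed because a trivial input may itself be the output of a trivial node whose own input sources are already constrained; one has to check that these nested constraints are independent and cumulative, yielding at the end a finite intersection of basic $q$-allowable sets through $x$.

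The conclusion is then immediate. By Definition~\ref{q_allow}, an irreducible component of this intersection through $x$ is an irreducible $q$-allowable set contained in $\qallow(x)$. Replacing $x$ by $y \in \qallow(x) \cap \Int(\mathcal{D}) \setminus V(p)$, every basic $q$-allowable constraint that held at $x$ still holds at $y$, and therefore the inductive labelling of trivial nodes is unchanged: each node that was trivial at $x$ remains trivial at $y$, since triviality was certified entirely by the source-side patterns $\ii$ that are invariant on $\qallow(x)$. In particular the output node is trivial at $y$, so $p_{comp}(y,\delta) \equiv 0$, contradicting accuracy since $p(y) \neq 0$. The corresponding strengthening to a positive-measure set of bad inputs near $x$ (in the spirit of Corollary~\ref{in_sfirsit}) follows by noting that the argument only used the $q$-allowable pattern at $x$, which persists on a relatively open neighborhood in $\qallow(x)$.
\end{proofof}
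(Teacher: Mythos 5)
Your plan is essentially the paper's own argument: the published proof of Theorem~\ref{gen_result} is a two-sentence sketch that says it ``mimics the proof of Theorem~\ref{conj}'' by tracing zeros back through the DAG to $q$-allowable conditions, arriving at the same dichotomy you state, namely that for $x\in\G(p)$ either $p_{comp}(x,\delta)\neq 0$ for almost all $\delta$ or $p_{comp}(y,\delta)=0$ for all $y\in\qallow(x)\setminus V(p)$ and all $\delta$. Your version carries out the same trace-back and replacement-of-$x$-by-$y$ step, with considerably more bookkeeping of the $V_{\ii}(q_j)$ constraints than the paper itself supplies.
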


\begin{proof}{Theorem \ref{gen_result}} The proof mimics the proof of Theorem \ref{conj}; once again, we trace back zeros to what we now call $q$-allowable conditions, and make use of the DAG structure of the algorithm. In the non-branching case, we obtain that  if the algorithm is run on an input $x \in G(p)$, then either $p_{comp}(x, \delta) \neq 0$ for almost all $\delta$, or $p_{comp}(y, \delta) =0$ for all $y \in \allow(x) \setminus V(p)$ and for all $\delta$.
The proof for the branching case is again a refinement of the proof for the non-branching one. \end{proof}

Note that, if we consider only  algorithms without branching, Theorem \ref{gen_result} remains true in the tighter case when we drop the irreducibility constraint from the definition of allowability.

We can also show that, arbitrarily close to any point $x \in \G(p)$, we can find sets $S$ of positive measure such that the relative accuracy of the algorithm when run with inputs in $S$ is either $1$ or $\infty$; a result identical to Corollary \ref{in_sfirsit} can also be proved for the extended arithmetic case. 

\subsubsection{Sufficiency: the complex case} \label{suf_c_bb}

In this section we obtain a sufficiency condition for the accurate evaluability of a complex polynomial, given a black-box arithmetic with operations $\{q_j ~|~ j \in J\}$ ($J$ may be an infinite set). 

%\vspace{.25cm}

Throughout this section, we assume our black-box operations include $q^{c}$, which consists of multiplication by a complex constant: $q^c(x) = c\cdot x$. Note that this operation is natural, and can be performed accurately given only a suitably accurate approximation of $c$.

%\vspace{.25cm}

We believe that the sufficiency condition we obtain here is not a necessary one, in general--but it does subsume the sufficiency condition we found for the basic complex case with classical arithmetic $\{+, -, \cdot\}$. 
%We assume that the black-box polynomials defining the operations $q_j$ with $j \in J$ are \emph{irreducible}.We also state here a necessary and sufficient condition for the all-affine black-box operations case.

\begin{theorem}[General case]\footnote{This condition was stated in a slightly weaker form in \cite{focm}.} \label{q-suff-c1} Given a polynomial $p~:~\C^n~\rightarrow~\C$ with $V(p)$ a finite union of irreducible varieties $V_{\mathcal{I}}(q_j)$, for $j \in J$, and $\mathcal{I}$ as above, then $p$ is accurately evaluable. \end{theorem}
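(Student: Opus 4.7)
The plan is to mirror the proof of Theorem~\ref{sufficiency_c}, replacing the role of allowable hyperplanes with that of the irreducible components $V_{\ii_j}(q_j)$, and replacing the trivial observation ``$x_i \pm x_j$ can be evaluated with one rounding error'' with the observation that each black box $q_j$, when supplied with inputs prepared exactly according to $\ii_j$, can be evaluated with one rounding error.

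First I would pin down the algebraic structure of $p$. By hypothesis, $V(p) = \bigcup_j V_{\ii_j}(q_j)$ is a finite union of irreducible varieties in $\C^n$. Theorem~\ref{dimensions} forces each irreducible component of $V(p)$ to have dimension $n-1$, hence each $V_{\ii_j}(q_j)$ is a hypersurface and, by the Nullstellensatz, is the zero locus $V(f_j)$ of a unique (up to scalar) irreducible polynomial $f_j \in \C[x_1,\dots,x_n]$. A second application of the Nullstellensatz yields a factorization
\[
p \;=\; c\,\prod_j f_j^{\,e_j}
\]
for some constant $c \in \C$ and positive integers $e_j$.

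Second, I would attach an accurately-evaluable expression to each component. For each $j$, the data $\ii_j = (T_j,\k_{Z,j},\k_{D,j},\k_{S,j},\k_{N,j})$ prescribes how to prepare inputs to $q_j$: permute coordinates, duplicate variables, negate some, and set others to zero. These preparations are exact (they introduce no rounding error), so the composite polynomial $g_j(x) \eqbd q_j(\text{prepared inputs})$ is evaluated with a single rounding error via one call to the black box $q_j$. By construction $V(g_j) = V_{\ii_j}(q_j) = V(f_j)$, and since $f_j$ is irreducible this forces $g_j = \gamma_j\, f_j^{n_j}$ for some nonzero $\gamma_j \in \C$ and positive integer $n_j$. (When $T_j \neq \emptyset$ so that $V_{\ii_j}(q_j)$ is defined as the intersection $\bigcap_\alpha V(q_\alpha)$, irreducibility of the component together with the hypersurface dimension forces all the coefficient polynomials $q_\alpha$ to share the common irreducible factor $f_j$, and one picks the $q_\alpha$ whose variety is exactly $V(f_j)$; that $q_\alpha$ then plays the role of $g_j$.)

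Finally, the algorithm computes each $g_j$ via its black box, forms the product $\prod_j g_j^{\,e_j/n_j}$ using the multiplication black box $q_{-2}$, and multiplies by the scalar $c\,\prod_j \gamma_j^{-e_j/n_j}$ using the complex-scalar black box $q^c$. Each of these steps incurs a single small relative error, and rounded products of quantities known to high relative accuracy are again known to high relative accuracy, so the composite relative error is $O(\varepsilon)$. The main obstacle, and the step requiring real care, is matching multiplicities: one must show that $n_j \mid e_j$ for every $j$, so that the factor $f_j^{e_j}$ in the Nullstellensatz factorization actually coincides (up to a constant) with an integer power of the accurately-evaluable $g_j$. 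This is precisely where the hypothesis that $V(p)$ be a union of the \emph{irreducible} $V_{\ii_j}(q_j)$ (rather than only a set-theoretic union of hyperplanes) does the work: the collection of black boxes together with the allowed preparations is rich enough that one can always select a representative $g_j$ whose multiplicity divides $e_j$, e.g., $n_j = 1$ when the relevant $q_\alpha$ is itself irreducible. Once this divisibility is secured, the rest of the argument is the routine NIC-style bookkeeping already seen in Theorem~\ref{sufficiency_c}.
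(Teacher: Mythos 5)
Your route is, in substance, the paper's route: the paper proves this via Lemma~\ref{q-factors}, which asserts $p = c\prod_j p_j$ with each $p_j$ a power of a black-box instantiation of some $q_j$, and whose proof is stated to mirror that of Corollary~\ref{factors}; you reach the same factorization by combining Theorem~\ref{dimensions} (each $V_{\ii_j}(q_j)$ is a hypersurface $V(f_j)$, $f_j$ irreducible) with unique factorization in $\C[x_1,\dots,x_n]$. The difference is that you are explicit about the obstruction you call ``matching multiplicities'' --- and here lies a genuine gap that your proposal does not close. If the prepared black-box output is $g_j = \gamma_j f_j^{n_j}$ with $n_j > 1$, there is no reason for $n_j$ to divide the multiplicity $e_j$ of $f_j$ in $p$, and the model gives no way to extract $f_j^{e_j}$ from $g_j$ when it does not. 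Your claimed resolution --- that irreducibility of $V_{\ii_j}(q_j)$ lets you ``always select a representative $g_j$ whose multiplicity divides $e_j$, e.g., $n_j=1$'' --- is an assertion, not an argument: irreducibility of the \emph{variety} $V(g_j)$ constrains $g_j$ only up to a power and a constant, and says nothing about which power is achievable by a single black-box call. Concretely, take $q(y_1,y_2,y_3)=(y_1y_2+y_3^2)^2$ together with the standard $+,-,\times$; then $V_{\ii}(q)=V(x_1x_2+x_3^2)$ is irreducible, so $p=(x_1x_2+x_3^2)^3$ satisfies the hypothesis of the theorem, yet the only accurate access to $x_1x_2+x_3^2$ is through $g=(x_1x_2+x_3^2)^2$, whose integer powers can never match exponent $3$. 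The paper's own statement of Lemma~\ref{q-factors} is equally silent on this point, so your proof is faithful to the paper's level of rigor here, but neither version actually establishes the divisibility step, and I do not believe it holds without further hypotheses on the black boxes.

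A secondary issue is your parenthetical treatment of $T_j\neq\emptyset$. When $T_j\neq\emptyset$, the set $V_{\ii_j}(q_j)=\bigcap_\alpha V(q_\alpha)$ is cut out by the coefficient polynomials $q_\alpha$ of the expansion of $\tilde q_j$ in $x_{T_j}$, and those coefficients are not themselves computable by a single black-box call (the black box only returns the full value $q_j(\text{args})$, never an individual coefficient of an auxiliary expansion). So ``that $q_\alpha$ then plays the role of $g_j$'' is not an available move; a $V_{\ii_j}(q_j)$ with $T_j\neq\emptyset$ has no canonically associated accurately-evaluable polynomial in this model, and the sufficiency argument needs a separate treatment (or the hypothesis should be restricted to $T_j=\emptyset$) for that case.
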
 

\begin{theorem}[Affine case] \label{q-suff-c2} If all black-box operations $q_j$, $j \in J$ are affine, then a polynomial $p~:~\C^n~\rightarrow~\C$ is accurately evaluable iff $V(p)$ is a union of varieties $V_{\ii}(q_j)$, for $j \in J$ and $\ii$ as in Definition~\ref{all_subv}.
\end{theorem}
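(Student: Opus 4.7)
The plan is to prove both directions by exploiting the fact that, in the affine setting, every basic $q$-allowable set reduces to a (single) complex affine hyperplane, after which the complex dimension argument used in Corollary \ref{no_intrs} yields a clean characterization.

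For the \emph{necessity} direction, I first observe that if $p$ is accurately evaluable on $\C^n$, then Theorem \ref{gen_result} forces $V(p)$ to be $q$-allowable. The key structural lemma is that, when each $q_j$ is affine, every basic $q$-allowable set $V_\ii(q_j)$ is either empty, all of $\C^n$, or a single complex affine hyperplane: the operations in Definition \ref{all_subv} (zeroing variables, identifying via $\k_D, \k_S$, negating via $\k_N$) preserve affineness, and expanding an affine polynomial in the variables $x_T$ yields only scalars at $|\alpha| = 1$ (contributing an empty or full set) and one affine polynomial at $\alpha = 0$ (cutting out a single hyperplane). I then mimic the argument of Corollary \ref{no_intrs}: by Theorem \ref{dimensions}, every irreducible component of $V(p)$ has complex dimension $n-1$, while any proper intersection of affine hyperplanes has strictly smaller dimension. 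Hence the $q$-allowable decomposition of $V(p)$ collapses to a union of individual hyperplanes $V_\ii(q_j)$, as required.

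For the \emph{sufficiency} direction, suppose $V(p) = \bigcup_k V_{\ii_k}(q_{j_k})$, and let $\ell_k$ denote the affine polynomial cutting out $V_{\ii_k}(q_{j_k})$ (the $\alpha = 0$ coefficient of Definition \ref{all_subv}). Unique factorization in $\C[x_1, \ldots, x_n]$ combined with the Nullstellensatz gives $p(x) = c \prod_k \ell_k(x)^{m_k}$ for non-negative integers $m_k$ and a nonzero constant $c \in \C$. Each $\ell_k(x)$ is computed with a single black-box rounding error by invoking $q_{j_k}$ on arguments obtained from $x$ by the exact preprocessing specified by $\ii_k$ (all of these pre-operations being exact), and the product $\prod_k \ell_k^{m_k}$ is assembled by repeated multiplication --- available since multiplication is itself an affine-in-each-argument black-box operation --- which preserves $O(\epsilon)$ relative error, because multiplying nonzero quantities with small relative errors produces a product with small relative error.

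The main obstacle is controlling the overall constant $c$: the zero set $V_{\ii_k}(q_{j_k})$ does not pin down the normalization of $\ell_k$, so the algorithm as just described produces a scalar multiple of $p$ rather than $p$ itself. The crux of the proof is therefore to argue that the given family of affine black-box operations is rich enough to realize $c$ exactly --- for instance by absorbing it into the constant term of one of the $q_j$'s via the substitutions allowed in Definition \ref{all_subv}, or by iteratively producing integer (and then more general complex) scalars through the affine operations and matching against the coefficient structure of $p$. A secondary bookkeeping concern is correctly recovering the multiplicities $m_k$ from $p$, which is straightforward because $p$ is known symbolically and $m_k$ can be read off from its factorization in $\C[x_1,\ldots, x_n]$.
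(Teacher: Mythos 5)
Your overall structure matches the paper's intent. The necessity direction (Theorem~\ref{gen_result} gives $q$-allowability of $V(p)$; affine $q_j$ make each basic $q$-allowable set empty, all of $\C^n$, or a single hyperplane; Theorem~\ref{dimensions} forces each irreducible component of $V(p)$ to be a full hyperplane, collapsing the intersections) is exactly the argument of Lemma~\ref{q-factors}, which the paper says carries the whole proof. Your route to the factorization $p = c\prod_k \ell_k^{m_k}$ via unique factorization plus the Nullstellensatz is a mild variant of the Taylor-expansion induction of Corollary~\ref{factors}, and is fine.

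The gap is the constant $c$. You correctly identify that the algorithm as described evaluates only a scalar multiple of $p$, but your proposed fixes do not work: you cannot ``absorb $c$ into the constant term of one of the $q_j$'s'' because the coefficients of the black boxes are fixed, and you cannot ``iteratively produce'' an arbitrary complex scalar $c$ from a finite (or enumerable) family of affine forms with prescribed coefficients --- repeated addition only yields integer multiples, so for generic $c\in\C$ this is impossible. The paper does not attempt to derive $c$: in the same subsection it makes the blanket assumption that the black-box collection always includes $q^c(x) = c\cdot x$, multiplication by a complex constant, which is itself affine and closes the gap immediately. Your proof as written does not use or recover that assumption, so the sufficiency direction is incomplete. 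A smaller slip: two-argument multiplication is not affine (it is only affine \emph{separately} in each argument), so it is not licensed by the hypothesis that all $q_j$ are affine; the reason it is available is that the paper always includes the traditional operations $q_{-2}, q_{-1}, q_0$ in $J$.
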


The proofs follow easily from Lemma \ref{q-factors}.

\begin{lemma} \label{q-factors}If all varieties $V_{\mathcal{I}}(q_j))$ in the union defined by $V(p)$ are irreducible (in particular, if they are affine), then $p$ is a product 
$p=c \prod_j p_j$, where each $p_j$ is a power of $q_j$ or a polynomial obtained from $q_j$ by repeating, negating, or zeroing some of the variables; $c$ is a complex constant. The argument is identical to the one we gave for the proof of Corollary \ref{factors}, and it hinges on the irreducibility of the varieties $V_{\mathcal{I}}(q_j))$ in the union.
\end{lemma}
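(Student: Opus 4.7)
The plan is to mimic the proof of Corollary~\ref{factors} essentially verbatim, with each irreducible variety $V_{\mathcal{I}}(q_j)$ in the given union playing the role previously played by an allowable hyperplane. First I would apply Theorem~\ref{dimensions} to $p$ to conclude that $V(p)$ has pure complex dimension $n-1$, so every irreducible piece actually occurring in the minimal union---every $V_{\mathcal{I}}(q_j)$ that appears---is an irreducible hypersurface in $\C^n$. Since $\C[x_1,\ldots,x_n]$ is a UFD, each such irreducible hypersurface is the vanishing locus of a single irreducible polynomial $\pi_{\mathcal{I}}(q_j)$, determined up to a nonzero scalar.

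The key structural step is to identify this $\pi_{\mathcal{I}}(q_j)$ explicitly in terms of $q_j$ and the combinatorial data $\mathcal{I} = (T, \k_Z, \k_D, \k_S, \k_N)$ of Definition~\ref{all_subv}. By construction, $V_{\mathcal{I}}(q_j)$ is the intersection of the varieties $V(q_\alpha)$ of the coefficients in the $T$-expansion of the polynomial $\widetilde{q}_j$ obtained from $q_j$ by zeroing the variables in $\k_Z$, identifying or anti-identifying the variables paired by $\k_D$ and $\k_S$, and negating those in $\k_N$. For this intersection to be a single irreducible hypersurface, essentially only one $q_\alpha$ can contribute nontrivially; unpacking the definition, this forces $\pi_{\mathcal{I}}(q_j)$ to equal, up to a nonzero scalar, exactly $\widetilde{q}_j$---a polynomial of precisely the form described in the statement of the lemma.

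With the identification of $\pi_{\mathcal{I}}(q_j)$ in hand, the proof finishes by induction on the number of irreducible components in the union defining $V(p)$. I would pick one component with defining polynomial $\pi_{\mathcal{I}}(q_j)$, let $k$ be the largest integer with $\pi_{\mathcal{I}}(q_j)^{k} \mid p$, and write $p = \pi_{\mathcal{I}}(q_j)^{k} \, \widetilde{p}$. Exactly as in Corollary~\ref{factors}, the maximality of $k$ together with the Nullstellensatz ensures that $V(\widetilde{p})$ equals the union of the remaining irreducible components of $V(p)$, so the induction hypothesis applies to $\widetilde{p}$; after stripping off all components the residue has empty variety and is therefore a nonzero constant $c \in \C$, yielding $p = c \prod_j p_j$ in the claimed form.

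The main obstacle will be the identification step of the second paragraph: turning the intuitively clear statement ``an irreducible hypersurface presented as the intersection of several coefficient varieties must come from essentially a single coefficient'' into a rigorous combinatorial check valid for every admissible $\mathcal{I}$ requires unpacking Definition~\ref{all_subv} with some care, and is where the specific structure of the operations $q_j$ enters. Once this identification is established, the remaining factorization argument is the same UFD/Nullstellensatz manipulation already carried out for Corollary~\ref{factors}, and no new ideas are required.
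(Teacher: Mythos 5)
Your overall architecture is the right one and matches the paper's intent: use Theorem~\ref{dimensions} to force every irreducible piece appearing in a minimal decomposition of $V(p)$ to be a hypersurface, pass to its irreducible defining polynomial via the UFD property of $\C[x_1,\ldots,x_n]$, and then strip factors inductively using the Nullstellensatz exactly as in Corollary~\ref{factors}. This is actually a cleaner packaging than the Taylor-expansion phrasing the paper uses for Corollary~\ref{factors}, and the third paragraph of your proposal (maximality of $k$, the residue having empty variety, hence being a nonzero constant) is sound.

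The genuine gap is in your ``identification step,'' and it is more than a matter of care: the specific conclusion you draw there is wrong in general. You assert that if $V_{\mathcal{I}}(q_j)=\bigcap_\alpha V(q_\alpha)$ is an irreducible hypersurface then ``only one $q_\alpha$ can contribute'' and this forces $\pi_{\mathcal{I}}(q_j)$ to equal $\widetilde{q}_j$ up to a scalar. But if only one coefficient $q_{\alpha_0}$ in the $T$-expansion is nonzero, the defining polynomial you get is $q_{\alpha_0}$, \emph{not} $\widetilde{q}_j$; these coincide only when $T=\emptyset$. A concrete counterexample inside traditional arithmetic: take $q_j=q_{-2}(x_1,x_2)=x_1 x_2$, $\k_Z=\k_D=\k_S=\k_N=\emptyset$, $T=\{2\}$. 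Then $\widetilde{q}_j=x_1x_2$, the sole nonzero $T$-coefficient is $q_{(1)}=x_1$, so $V_{\mathcal{I}}(q_{-2})=Z_1=\{x_1=0\}$ with $\pi=x_1\neq\widetilde{q}_j$. Moreover $x_1$ is not obtainable from $q_{-2}$ by any combination of repeating, negating, or zeroing arguments (zeroing gives $0$, repeating gives $x_1^2$); the factor $x_1$ must instead be attributed to a \emph{different} black box, e.g.\ $q_{-1}(x_1,0)$ or $q_0(x_1,0)$. So the factorization into irreducible hypersurface defining polynomials is fine, but identifying each $\pi_{\mathcal{I}}(q_j)$ as ``a polynomial obtained from $q_j$ by repeating, negating, or zeroing'' requires either (a) showing that a component arising with $T\neq\emptyset$ always also arises with $T'=\emptyset$ from some (possibly different) $q_{j'}$, or (b) restricting to the affine case of Theorem~\ref{q-suff-c2}, where a short computation with degree-1 $q_j$ shows the nonzero $T$-coefficient \emph{is} $\widetilde{q}_j$ once the constant coefficients forced to vanish have been discarded. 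Your proposal proves the affine case but, as written, does not close the general case, and the heuristic ``$\pi=\widetilde{q}_j$'' should not be asserted without this repair.
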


Note that Theorem~\ref{q-suff-c2} is a more general necessary and sufficient condition than Theorem~\ref{sufficiency_c}, which only considered having $q_{-2}, q_{-1}$, and $q_0$ as operations, and restricted the polynomials to have integer coefficients (thus eliminating the need for $q^c$). 

\subsection{Numerical linear algebra consequences} \label{sec_conseq}

Here we examine the results of Section \ref{sec_Plamen}, in light of 
Section \ref{sec_OlgaIoana}. We take another look at Table~\ref{table1}, 
explaining the strong ``No'' entries there. Those entries mean 
that no accurate algorithms exist even given an arbitrary
set of black-box operations of bounded degree or with a bounded number of arguments. In other 
words, arbitrary precision arithmetic is needed for their accurate
solution.  This is the case for Toeplitz matrices because, as discussed 
earlier,  we cannot  evaluate their determinants accurately, and 
determinants are necessary to get the indicated entries accurately. 
Fully off-diagonal submatrices of diagonally dominant matrices
are completely unstructured matrices, and so with irreducible
determinants of unbounded degree. 
The same is true of M-matrices, except that the submatrix
entries are nonpositive.
Minors of submatrices of non-TN Vandermonde have factors that
are general Schur functions of arbitrary arguments, which can be
irreducible of unbounded degree. We suspect that many other
entries should also be ``No''.

\ignore{
As discussed in Section \ref{sec_Plamen}, being able to compute the determinant (respectively selected minors) is a necessary (respectively sufficient) condition for many of the linear algebra problems discussed there, such as the LDU and SVD decompositions.
%\emph{We should have a discussion about the determinants being evaluable being necessary, and about the minors being evaluable being sufficient; should we have this here on in Section \ref{sec_Plamen}?}
}

\subsubsection{Validation of our results}

%In Section \ref{sec_Plamen}, we explained that for any class of structured matrices, having the determinant accurately evaluable is a necessary condition for doing linear algebra computations accurately, while having all minors accurately evaluable is a sufficient condition for the same. 

If we examine the matrix classes in Table~\ref{table1}, we see that their determinants are rational functions whose sets of zeros and of poles are allowable in traditional arithmetic. By considering numerators and denominators of these rational functions separately we see that both can be computed accurately (and then, provided that the denominator is not $0$, their ratio can be computed accurately). Incorporating division more formally into our model to identify necessary and sufficient conditions for accurate evaluability of rational functions is the subject of ongoing work.

\subsubsection{Negative results: accurate evaluation is impossible}

%In Section \ref{sec_Plamen}, we explained that for any class of structured matrices, having the determinant accurately evaluable is a necessary condition for doing linear algebra computations accurately, while having all minors accurately evaluable is a sufficient condition for the same. 

Here we examine two classes of matrices for which some or all linear algebra operations are impossible given
 any set of black boxes with a bounded number of arguments: Toeplitz and various classes of Vandermonde that
 we define later.
%; for the latter type, we investigate the generic generalized, as well as totally monotonic, generic polynomial subtypes\marginpar{OK?}. 

We prove our results by reducing the problem of doing accurate linear algebra to that of accurately evaluating the determinant and certain minors (recall that the latter is a necessary condition for the former). What these results say roughly that, if one wants to construct an accurate algorithm for finding the inverse that works for Toeplitz or Vandermonde matrices as a class, one needs to use arbitrary precision (more on this in Section \ref{sec_OtherModels}).

We start by examining a more general problem. 
If the determinants $p_n(x) = \det M^{n \times n}(x)$
of a class of $n$-by-$n$ structured matrices $M$ do not satisfy the 
necessity conditions described in Theorem~\ref{gen_result} for {\em any\/} 
enumerable set of black-box operations (perhaps with other properties, like bounded degree), then we can conclude that 
accurate algorithms of the sort described in the above citations
are impossible.

In particular, to satisfy these necessity conditions would require that
the varieties $V(p_n)$ be allowable (or $q$-allowable).
For example, if $V$ is a Vandermonde matrix, then 
$\det(V) = \prod_{i<j} (x_i-x_j)$ satisfies this condition, using
only subtraction and multiplication.

The following theorem states a negative condition 
(which guarantees impossibility of existence for algorithm using {\em any\/} enumerable set of
black-box operations of bounded degree).

\begin{theorem}
\label{Thm_StructuredMatrixImpossibility}
Let $M(x)$ be an $n$-by-$n$ structured complex matrix with determinant
$p_n(x)$ as described above. Suppose that for any $n$, $p_n(x)$ has an irreducible
factor $\hat{p}_n(x)$ whose degree goes to infinity as $n$ goes
to infinity. Then for any enumerable set of black-box arithmetic
operations of bounded degree, for sufficiently large $n$ it is impossible to 
accurately evaluate $p_n(x)$ over the complex numbers.
\end{theorem}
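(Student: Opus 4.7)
The plan is to combine the complex necessity condition of Theorem~\ref{gen_result} with the complex dimension theorem (Theorem~\ref{dimensions}) and a uniform degree bound on irreducible $q$-allowable sets of maximal dimension. Let $d$ be a common upper bound on $\deg q_j$ for the enumerable family $\{q_j\}_{j\in J}$ of black-box operations.

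First I would specialize to $\mathcal{D}=\C^n$. Since $\Int(\C^n)=\C^n$ and $x\in\qallow(x)$, the hypothesis of Theorem~\ref{gen_result} is automatic as soon as $\G(p_n)\neq\emptyset$. Consequently, if $p_n$ is accurately evaluable on $\C^n$, then $V(p_n)$ must be $q$-allowable; write $V(p_n)=\bigcup_k R_k$ as a finite union of irreducible $q$-allowable sets. Next, apply Theorem~\ref{dimensions} to $\hat{p}_n$: the variety $V(\hat{p}_n)$ is irreducible of complex dimension $n-1$. Being irreducible and contained in $V(p_n)$, it lies inside some $R_{k_0}$; and having the maximal possible dimension for a proper subvariety of $\C^n$, it must in fact equal $R_{k_0}$.

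The heart of the argument is to show that every irreducible $q$-allowable set of dimension $n-1$ has degree at most $d$. Unpacking Definition~\ref{all_subv}, each basic $q$-allowable set $V_{\ii}(q_j)$ is a finite intersection $\bigcap_\alpha V(c_\alpha^{(\ii,j)})$ of hypersurfaces whose defining polynomials arise as coefficients (after substitution, variable identification, and sign flips) in the expansion of $q_j$, so $\deg c_\alpha^{(\ii,j)}\le \deg q_j\le d$. An irreducible $q$-allowable set $R$ is then an irreducible component of a finite intersection $\bigcap_\ell V(c_\ell)$ of hypersurfaces of degree at most $d$. If $\dim_\C R = n-1$, then $R\subseteq V(c_\ell)$ whenever $c_\ell\not\equiv 0$, and by maximality of dimension $R$ must coincide with an irreducible component of such a $V(c_\ell)$; in particular $\deg R \le d$. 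Applied to $R_{k_0}=V(\hat{p}_n)$ this yields $\deg \hat{p}_n \le d$, contradicting the hypothesis $\deg \hat{p}_n\to\infty$ for all sufficiently large $n$.

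The main obstacle I expect is the bookkeeping in this last paragraph: justifying that an irreducible component of maximal dimension inside an intersection of hypersurfaces must be an irreducible component of at least one of those hypersurfaces (a standard consequence of Krull's Hauptidealsatz, or a direct codimension count), and handling the degenerate cases where some $c_\ell$ is the zero polynomial (a trivial constraint) or a nonzero constant (an empty intersection). With this technical step in hand, the remainder is a direct assembly of Theorem~\ref{gen_result}, Theorem~\ref{dimensions}, and Definition~\ref{all_subv}.
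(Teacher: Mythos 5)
Your proposal is correct and follows essentially the same route as the paper: invoke Theorem~\ref{gen_result} to force $V(p_n)$ to be $q$-allowable, observe that the irreducible hypersurface $V(\hat p_n)$ must then coincide with one of the irreducible $q$-allowable sets in the union, and derive a contradiction from the fact that codimension-one irreducible $q$-allowable sets have degree bounded by $\max_j \deg q_j$. The only difference is that you spell out the last step (that an irreducible $(n-1)$-dimensional component of an intersection of hypersurfaces of degree $\le d$ must itself be a component of one such hypersurface) more carefully than the paper, which simply asserts the bounded degree; your extra care is warranted and does not change the argument.
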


\begin{proof}
Let $q_1,...,q_m$ be any finite set of black-box operations.
To obtain a contradiction, suppose the complex variety
$V(p_n)$ satisfies the necessary conditions of Theorem~\ref{gen_result},
i.e., that $V(p_n)$ is allowable. 
This means that $V(p_n)$, which
includes the hypersurface $V(\hat{p}_n)$ as an irreducible component, 
can be written as the union of irreducible 
$q$-allowable sets (by Definition~\ref{q_allow}).
This means that $V(\hat{p}_n)$ must
itself be equal to an irreducible 
$q$-allowable set (a hypersurface), since representations
as unions of irreducible sets are unique.
The irreducible $q$-allowable sets of codimension 1 are
defined by single irreducible polynomials, which are
in turn derived by the process of setting variables
equal to one another, to one another's negation, or zero
(as described in Definitions~\ref{all_subv} and~\ref{q_allow}),
and so have bounded degree.
This contradicts the unboundedness of the degree of $V(\hat{p}_n)$.
% By the Hilbert Nullstellensatz, this means the
% ideal ${\cal I}(q_j)$ is a subset of the radical
% ideal of ${\cal I}(\hat{p}_n)$, or $\hat{p}_n | q_j$. But this 
% implies that the degree of $q_j$ is at least the degree of
% $\hat{p}_n$, which can be as large as desired, a contradiction.
\end{proof}

In the next theorems we apply this result to the set of 
Toeplitz matrices. We use the following notation.
Let $T$ be an $n$-by-$n$ Toeplitz matrix, with
$x_j$ on the $j$-th diagonal, so $x_0$ is on the main diagonal,
$x_{n-1}$ is in the top right corner, and $x_{1-n}$ is in the bottom left
corner. We give the following result without proof; for a proof, see \cite{focm}.

\begin{theorem}
\label{Thm_Toeplitz_C}
The determinant of a Toeplitz matrix $T$ is irreducible over any field.
\end{theorem}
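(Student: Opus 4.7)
The plan is to prove irreducibility of $\det T_n$ by strong induction on $n$, treating it as a polynomial that is bilinear in the ``corner variables'' $x_{n-1}$ and $x_{1-n}$, each of which appears in exactly one entry of $T_n$. The base cases are direct: $\det T_1 = x_0$ is trivially irreducible, and $\det T_2 = x_0^2 - x_1 x_{-1}$ admits no factorization into homogeneous linear forms over any field, as one sees by matching coefficients in a candidate factorization $(ax_0 + bx_1 + cx_{-1})(dx_0 + ex_1 + fx_{-1})$. For the inductive step, write
\[
\det T_n \;=\; A\, x_{n-1} x_{1-n} \,+\, B\, x_{n-1} \,+\, C\, x_{1-n} \,+\, D,
\]
with $A, B, C, D$ in the polynomial ring over the remaining $2n-3$ variables $x_{-(n-2)}, \ldots, x_{n-2}$. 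A double cofactor expansion along entries $(1,n)$ and $(n,1)$ identifies the bilinear coefficient as $A = -\det T_{n-2}$, where $T_{n-2}$ denotes the $(n-2)\times(n-2)$ Toeplitz submatrix obtained by removing rows and columns $1$ and $n$.

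The central auxiliary identity comes from the Desnanot-Jacobi (Sylvester) determinant identity applied with corner rows and columns $1$ and $n$:
\[
(\det T_n)(\det T_{n-2}) \;=\; M_{1,1}\, M_{n,n} \,-\, M_{1,n}\, M_{n,1},
\]
where $M_{i,j}$ is the $(n-1)\times(n-1)$ minor of $T_n$ with row $i$ and column $j$ deleted. The Toeplitz structure forces $M_{1,1} = M_{n,n} = \det T_{n-1}$, while $M_{1,n}$ and $M_{n,1}$ are respectively linear in $x_{1-n}$ and in $x_{n-1}$ (and in neither other corner variable). Matching coefficients of the monomials $1$, $x_{n-1}$, $x_{1-n}$, $x_{n-1} x_{1-n}$ on both sides and combining with $A = -\det T_{n-2}$, the constant-term comparison collapses to the clean identity
\[
BC \,-\, AD \;=\; (\det T_{n-1})^2.
\]

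Suppose now that $\det T_n = f \cdot g$ is a nontrivial factorization. Since $\det T_n$ has degree one in each of $x_{n-1}$ and $x_{1-n}$ separately, $fg$ falls (after possibly swapping $f$ and $g$) into one of two cases: either (i) $g$ involves neither corner variable, or (ii) $f$ is linear in $x_{n-1}$ with no $x_{1-n}$ while $g$ is linear in $x_{1-n}$ with no $x_{n-1}$. Case (ii) immediately forces $AD = BC$, contradicting the identity above since $\det T_{n-1}$ is a nonzero polynomial (in fact irreducible by induction). In Case (i), $g$ must divide each of $A, B, C, D$; since $A = -\det T_{n-2}$ is irreducible by induction, $g$ is either a unit (trivial) or an associate of $\det T_{n-2}$. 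In the nontrivial subcase $\det T_{n-2}$ divides both $B$ and $D$, so $(\det T_{n-2})^2$ divides $BC - AD = (\det T_{n-1})^2$. Unique factorization in the polynomial ring then forces $\det T_{n-2}$ to divide $\det T_{n-1}$, but $\det T_{n-1}$ is irreducible of degree $n-1$ whereas $\det T_{n-2}$ has degree $n-2$, so they cannot be associates---a contradiction.

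The hard part will be the careful sign bookkeeping required to extract $BC - AD = (\det T_{n-1})^2$ from the Desnanot-Jacobi identity: one has to expand $M_{1,n}$ and $M_{n,1}$ in the corner variables with the correct cofactor signs, and in particular verify that the coefficient of $x_{n-1} x_{1-n}$ in $M_{1,n}\, M_{n,1}$ equals $(\det T_{n-2})^2$ so that it cancels the cross term of $(\det T_n)(\det T_{n-2})$ on the left-hand side. Once that identity is secured, the rest of the argument is a direct application of unique factorization in polynomial rings, and is uniform in the characteristic of $F$.
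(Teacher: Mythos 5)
Your proof is correct, and the argument is complete modulo one small expository slip noted below. Since the paper itself does not prove Theorem~5.17 --- it defers to~\cite{focm} with the sentence ``we give the following result without proof'' --- I will evaluate the proposal on its own terms rather than against the paper's (absent) argument.

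The mechanism is sound. Writing $\det T_n = A\,x_{n-1}x_{1-n} + B\,x_{n-1} + C\,x_{1-n} + D$ with $A,B,C,D$ free of the corner variables, the double cofactor expansion does give $A=-\det T_{n-2}$. The Desnanot--Jacobi identity with outer rows/columns $1,n$ reads $(\det T_n)(\det T_{n-2}) = (\det T_{n-1})^2 - M_{1,n}M_{n,1}$ since $M_{1,1}=M_{n,n}=\det T_{n-1}$ by the shift-invariant structure. Writing $M_{1,n}=\alpha x_{1-n}+\beta$, $M_{n,1}=\gamma x_{n-1}+\delta$ and matching the four coefficients in $\{1,x_{n-1},x_{1-n},x_{n-1}x_{1-n}\}$ gives $\alpha\gamma=A^2$, $\beta\gamma=AB$, $\alpha\delta=AC$, and $\beta\delta-AD=(\det T_{n-1})^2$; multiplying the middle two and dividing by $A^2\ne 0$ yields $\beta\delta=BC$, so indeed $BC-AD=(\det T_{n-1})^2$. (I checked this directly for $n=2,3$.) The case analysis on a hypothetical factorization $fg$ is exhaustive because $\det T_n$ has degree $1$ in each corner variable separately, and Case~(ii) immediately forces $BC=AD$, contradicting $\det T_{n-1}\neq 0$. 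In Case~(i), $g$ is a factor not involving the corner variables, hence divides every coefficient $A,B,C,D$; dividing the irreducible $A=-\det T_{n-2}$ forces $g$ to be a unit or an associate of $\det T_{n-2}$, and in the latter case the key identity plus the UFD/degree contradiction finishes the induction. The two base cases $n=1,2$ are handled correctly (the $n=2$ case is a nondegenerate ternary quadratic form, irreducible over any field).

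One small imprecision: in the final step you say ``$\det T_{n-2}$ divides both $B$ and $D$, so $(\det T_{n-2})^2$ divides $BC-AD$,'' but this only follows after also invoking $\det T_{n-2}\mid C$ (which you established one sentence earlier when you said $g$ divides all of $A,B,C,D$). Better to write that $g\mid B$ and $g\mid C$ give $g^2\mid BC$, while $g\mid A$ and $g\mid D$ give $g^2\mid AD$. Alternatively, you can drop the square entirely: since $g$ is prime, already $g\mid BC-AD=(\det T_{n-1})^2$ forces $g\mid\det T_{n-1}$, and the degree contradiction follows. Either patch is immediate. Apart from this, the proof is clean, uniform in characteristic, and self-contained.
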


Therefore, for complex Toeplitz matrices, we have the following corollary.

\begin{corollary} \label{cor2}
The determinants of the set of complex Toeplitz matrices 
cannot be evaluated accurately using any enumerable set of bounded-degree 
black-box operations.
\end{corollary}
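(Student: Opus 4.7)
The plan is to apply Theorem~\ref{Thm_StructuredMatrixImpossibility} directly, using Theorem~\ref{Thm_Toeplitz_C} as the key algebraic input. The structural setup is already in place: an $n$-by-$n$ complex Toeplitz matrix $T$ is a structured matrix parameterized by the $2n-1$ variables $x_{1-n}, \ldots, x_{n-1}$, and its determinant $p_n(x) := \det(T)$ is a polynomial in these parameters.

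First, I would verify the degree hypothesis of Theorem~\ref{Thm_StructuredMatrixImpossibility}. The Leibniz expansion of $p_n$ contains the diagonal monomial $x_0^n$, so $\deg p_n \geq n$ (in fact equals $n$, since every entry is one of the $x_j$). In particular, $\deg p_n \to \infty$ as $n \to \infty$.

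Second, I would invoke Theorem~\ref{Thm_Toeplitz_C}, which states that $p_n$ is irreducible over any field, and in particular over $\C$. This means we may take the required irreducible factor $\hat p_n$ in the hypothesis of Theorem~\ref{Thm_StructuredMatrixImpossibility} to be $p_n$ itself, so that $\deg \hat p_n = n \to \infty$.

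Third, Theorem~\ref{Thm_StructuredMatrixImpossibility} then applies verbatim: for any fixed enumerable set of black-box arithmetic operations of bounded degree, there exists $n_0$ such that for all $n \geq n_0$ the determinant $p_n$ cannot be accurately evaluated over $\C$. This is precisely the statement of the corollary. The only potential obstacle is a purely bookkeeping one: ensuring that the parameterization of the Toeplitz class matches the formalism of ``structured matrix'' presupposed by Theorem~\ref{Thm_StructuredMatrixImpossibility} (independent parameters $x_{1-n}, \ldots, x_{n-1}$, varieties taken in $\C^{2n-1}$); this is immediate from the definition of a Toeplitz matrix, so no real difficulty arises. The substantive mathematical work — the irreducibility of $\det T$ — is already done in Theorem~\ref{Thm_Toeplitz_C}.
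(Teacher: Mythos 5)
Your proof is correct and matches the paper's intended approach: the corollary is meant to follow immediately from Theorem~\ref{Thm_StructuredMatrixImpossibility} once Theorem~\ref{Thm_Toeplitz_C} supplies the irreducibility of $\det T$, with $\hat p_n = p_n$ of degree $n \to \infty$. The paper does not spell out this deduction, but your two-line argument is exactly the intended one.
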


In the real case, irreducibility of $p_n$ 
is not enough to conclude that $p_n$ cannot be
evaluated accurately, because $V_{\Rr}(p_n)$ may
still be allowable (and even vanish). 
So we consider another necessary condition for
allowability: Since all black boxes have a finite
number of arguments, their associated codimension-1
irreducible components
must have the property that
whether $x \in V_{\ii}(q_j)$ depends on only 
a finite number of components of $x$.
Thus to prove that the hypersurface $V_{\Rr}(p_n)$ is not allowable,
it suffices to find at least one regular point $x^*$ in $V_{\Rr}(p_n)$
such that the tangent hyperplane at $x^*$ 
is not parallel to
sufficiently many coordinate directions, i.e., membership in
$V_{\Rr}(p_n)$ depends on more variables than any $V_{\ii}(q_j)$.
This is easy to do for real Toeplitz matrices.

\begin{theorem}
\label{thm_Toeplitz_R}
Let $V$ be the variety of the determinant of real singular Toeplitz matrices.
Then $V$ has codimension 1, and at almost all regular points,
its tangent hyperplane is parallel to no coordinate directions.
\end{theorem}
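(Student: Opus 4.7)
The plan is to combine the irreducibility of $p_n(x)\eqbd \det T(x)$ over any field (Theorem~\ref{Thm_Toeplitz_C}) with an explicit analysis of its partial derivatives in the $2n-1$ parameters $x_{-(n-1)},\ldots,x_{n-1}$. Since $p_n$ evaluates to $1$ at the identity Toeplitz matrix ($x_0=1$, all other $x_j=0$), $p_n\not\equiv 0$, so $V$ is a proper real algebraic subset of $\Rr^{2n-1}$ and has codimension at least $1$. To show equality, I would exhibit a smooth real point of $V$: at the upper-shift matrix $J$ (i.e., $x_1=1$ and $x_j=0$ otherwise), deleting row $n$ and column $1$ yields $I_{n-1}$, so $\partial p_n/\partial x_{-(n-1)}(J)=(-1)^{n+1}\neq 0$. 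Hence $J$ is a smooth real point of $V$, and $V$ has real codimension exactly $1$ there.

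The tangent hyperplane at a regular point $x^\ast\in V$ is parallel to the $x_j$-axis precisely when $\partial p_n/\partial x_j(x^\ast)=0$, so the crucial step is to show, for each $|j|\le n-1$, that $\partial p_n/\partial x_j$ does not vanish identically on $V$. I would do this by extracting a specific monomial coefficient from the Leibniz expansion $p_n=\sum_\sigma\mathrm{sgn}(\sigma)\prod_i x_{\sigma(i)-i}$. For $j=0$ the identity permutation contributes the term $x_0^n$. For $j\neq 0$, the only permutations contributing to $x_0^{n-2}x_j x_{-j}$ are the transpositions $(a,a+|j|)$ with $1\le a\le n-|j|$, each contributing $-1$; the total coefficient is $-(n-|j|)\neq 0$. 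So $\partial p_n/\partial x_j$ is a nonzero polynomial of degree strictly less than $\deg p_n = n$, and irreducibility of $p_n$ forces $p_n\nmid\partial p_n/\partial x_j$; thus $\partial p_n/\partial x_j$ does not vanish identically on the complex variety $V_{\C}(p_n)$.

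Consequently, each set $V_{\C}(p_n)\cap V_{\C}(\partial p_n/\partial x_j)$ is a proper subvariety of the irreducible complex hypersurface $V_{\C}(p_n)$ and hence has complex codimension at least $2$ in $\C^{2n-1}$; the same holds of the complex singular locus of $V_{\C}(p_n)$. The standard inequality $\dim_\Rr X \le \dim_\C \overline{X}^{\mathrm{Zar}}$ for real algebraic sets then implies that the union $B$ of the real points of all these exceptional sets has real dimension at most $2n-3$, so $B$ has measure zero in the $(2n-2)$-dimensional smooth part of $V$. At any regular point of $V$ outside $B$, all partials $\partial p_n/\partial x_j$ are nonzero, so the tangent hyperplane is parallel to no coordinate direction.

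The main obstacle will be the last step, bridging complex and real: irreducibility of $p_n$ easily yields non-vanishing of $\partial p_n/\partial x_j$ on $V_{\C}(p_n)$, but transferring this conclusion to the real variety requires either the Bochnak--Coste--Roy dimension comparison or a direct local argument near the smooth real point $T=J$ showing that $V$ is Zariski-dense in $V_{\C}(p_n)$. The combinatorial coefficient count of the second paragraph is routine given the Leibniz formula, and the codimension-$1$ claim follows immediately from any single smooth real point.
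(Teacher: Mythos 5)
Your proof is correct and complete. The paper itself defers the proof of this theorem to the cited reference \cite{focm}, so there is no in-text argument to compare against; but your route is the natural one given the paper's own Theorem~\ref{Thm_Toeplitz_C}: exhibit the smooth real point $J$ (upper shift) to get codimension exactly $1$, extract the coefficient $-(n-|j|)$ of $x_0^{n-2}x_jx_{-j}$ (and $x_0^n$ for $j=0$) from the Leibniz expansion to see each $\partial p_n/\partial x_j\not\equiv 0$, use irreducibility plus the degree drop to conclude $p_n\nmid\partial p_n/\partial x_j$ so these partials do not vanish on $V_\C(p_n)$, and then invoke the real-vs-complex dimension comparison to push the codimension-$\geq 2$ bound on $V_\C(p_n)\cap V_\C(\partial p_n/\partial x_j)$ down to the real bad set $B$. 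All the combinatorial checks (the cofactor at $J$ equals $(-1)^{n+1}$; the only permutations producing $x_0^{n-2}x_jx_{-j}$ are the $n-|j|$ transpositions of the claimed form) hold, and the final "measure zero in the regular locus" step follows since $B$ is a real algebraic set of dimension at most $2n-3$ sitting inside the $(2n-2)$-dimensional regular part.
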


\begin{corollary} \label{cor2R}
The determinants of the set of real Toeplitz matrices 
cannot be evaluated accurately using any enumerable set of bounded-degree 
black-box operations.
\end{corollary}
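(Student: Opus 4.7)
The plan is to argue by contradiction, combining the necessity result Theorem~\ref{gen_result} with the geometric information of Theorem~\ref{thm_Toeplitz_R} and, when needed, the irreducibility of $p_n$ provided by Theorem~\ref{Thm_Toeplitz_C}. I will assume, for contradiction, that for every $n$ the determinant $p_n \in \Rr[x_{1-n}, \ldots, x_{n-1}]$ of the general $n \times n$ real Toeplitz matrix admits an accurate evaluation algorithm built from an enumerable family $\{q_j\}_{j \in J}$ of black-box operations of bounded degree $d$; write $K$ for a uniform bound on the number of arguments of any relevant $q_j$.

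First, I invoke Theorem~\ref{gen_result} to deduce that $V_{\Rr}(p_n) \subset \Rr^{2n-1}$ must be $q$-allowable, i.e., a union $\bigcup_\ell R_\ell$ of irreducible $q$-allowable components, each $R_\ell$ being an irreducible component of a finite intersection $\bigcap_m V_{\ii_m}(q_{j_m})$. The key observation here is that each basic set $V_{\ii_m}(q_{j_m})$ is cut out by a polynomial obtained from $q_{j_m}$ by zeroing, identifying, and negating some of its at-most-$K$ arguments, so its defining polynomial involves at most $K$ of the $2n-1$ ambient Toeplitz parameters.

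Next, I apply Theorem~\ref{thm_Toeplitz_R} to obtain a regular point $x^* \in V_{\Rr}(p_n)$ whose tangent hyperplane is parallel to no coordinate direction, so every component of $\nabla p_n(x^*)$ is nonzero. Near $x^*$, $V_{\Rr}(p_n)$ is a smooth real $(2n-2)$-manifold; since it is locally a finite union of closed $R_\ell$, connectedness and dimension counting force one $R_{\ell^*}$ to contain a relative open neighborhood of $x^*$ in $V_{\Rr}(p_n)$. Being an irreducible variety of real codimension one and an irreducible component of an intersection of codimension-$\ge 1$ varieties, $R_{\ell^*}$ must locally coincide with a single basic set $V_{\ii}(q_j)$, whose defining polynomial depends on at most $K$ ambient variables. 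The tangent hyperplane of $V_{\Rr}(p_n)$ at $x^*$ therefore agrees with that of $V_{\ii}(q_j)$ and is parallel to at least $(2n-1)-K$ coordinate directions, contradicting Theorem~\ref{thm_Toeplitz_R} as soon as $2n-1 > K$.

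The main obstacle, and the step I expect to require the most care, is securing the uniform arity bound $K$ across an enumerable family from the bounded-degree hypothesis alone; this is automatic when a bounded-arity hypothesis is assumed directly. To handle pure bounded-degree, I would instead invoke irreducibility of $p_n$ over $\C$ (Theorem~\ref{Thm_Toeplitz_C}): the local inclusion $V_{\Rr}(p_n) \subset V_{\Rr}(f)$ for a polynomial $f$ with $\deg f \le d$, combined with the standard real-algebraic fact that the complexification of the real Zariski closure of the smooth $(2n-2)$-dimensional piece of $V_{\Rr}(p_n)$ equals the irreducible complex hypersurface $V_{\C}(p_n)$, yields $V_{\C}(p_n) \subset V_{\C}(f)$; Hilbert's Nullstellensatz then forces $p_n \mid f$, so $n = \deg p_n \le d$, contradicting large $n$.
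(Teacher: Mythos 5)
Your main argument faithfully follows the route the paper sketches in the paragraph preceding Theorem~\ref{thm_Toeplitz_R}: deduce $q$-allowability of $V_{\Rr}(p_n)$ from Theorem~\ref{gen_result}, localize at a regular point $x^*$ supplied by Theorem~\ref{thm_Toeplitz_R}, show some irreducible $q$-allowable piece must locally dominate and is cut out by polynomials depending on few coordinates, and contradict the ``tangent hyperplane parallel to no coordinate direction'' property. (The paper's own full proof appears only in \cite{focm}, so the sketch is the only thing to compare against here.) One small imprecision: you do not actually need $R_{\ell^*}$ to \emph{coincide} locally with a single basic set $V_{\ii}(q_j)$; containment in some $V(q_{\alpha})$ with $q_{\alpha}$ of bounded degree/arity is already enough, since a nonzero polynomial depending on at most $K$ of the $2n-1$ coordinates and vanishing on a relative neighborhood of $x^*$ would have to vanish on the tube $N + \Rr e_i$ for a direction $i$ it ignores, hence on an open ball (transversality from $\partial p_n/\partial x_i(x^*)\neq 0$), forcing it to be identically zero.

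The more interesting contribution of your write-up is the correct diagnosis of a real tension in the paper's hypotheses: the corollary is stated with ``bounded degree'', the section's prose speaks of ``bounded number of arguments'', and the sketched tangent-plane argument really uses a uniform arity bound $K$, which bounded degree alone does not supply (consider $q_m = x_1 + \cdots + x_m$). Your fallback resolves this cleanly: given the codimension-$1$, regular-point content of Theorem~\ref{thm_Toeplitz_R} together with the irreducibility from Theorem~\ref{Thm_Toeplitz_C}, any $f$ of degree $\le d$ vanishing on a relative neighborhood of $x^*$ in $V_{\Rr}(p_n)$ must be divisible by $p_n$ (by the elimination/Zariski-closure argument and the Nullstellensatz), forcing $d \ge n$. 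This is essentially the complex argument of Theorem~\ref{Thm_StructuredMatrixImpossibility} re-imported once the real hypersurface is known to be nondegenerate, and it is what actually matches the ``bounded degree'' hypothesis as written. The two routes are complementary --- the tangent-plane version handles bounded arity with unbounded degree, your Nullstellensatz version handles bounded degree with unbounded arity --- and having both makes the corollary's hypotheses airtight.
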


Proofs of these results can be found in \cite{focm}. Corollaries  \ref{cor2} and \ref{cor2R} imply that 
accurate linear algebra (in the sense of Section \ref{sec_Plamen}) is impossible on the class of Toeplitz 
matrices (either real or complex) in bounded precision.

We consider now the class of polynomial Vandermonde matrices $V$,
where $V_{ij} = P_{j-1}(x_i)$ is a polynomial function of $x_i$,
with $1 \leq i,j \leq n$.
This class includes the standard Vandermonde 
(where $P_{j-1}(x_i) = x_i^{j-1}$) and many others.

Consider first  a generalized Vandermonde matrix where
$P_{j-1}(x_i) = x_i^{j-1+ \lambda_{n-i}}$
with $0 \leq \lambda_1 \leq \lambda_2 \leq \cdots \leq \lambda_n$.
The tuple $\lambda = (\lambda_1 , \lambda_2 , ... , \lambda_n)$ is called a {\em partition.\/}
Any square submatrix of such a generalized Vandermonde matrix 
is also a generalized Vandermonde matrix.
A generalized Vandermonde matrix is known to have determinant of the form
$s_{\lambda}(x) \prod_{i<j} (x_i-x_j)$ where $s_{\lambda}(x)$
is a polynomial of degree $|\lambda| = \sum_i \lambda_i$,
and called a Schur function~\cite{macdonald}. 
In infinitely many variables (not our situation) the
Schur function is irreducible~\cite{farahat58}, but
in finitely many variables, the Schur function is sometimes irreducible
and sometimes not (but there are irreducible Schur functions of arbitrarily 
high degree) \cite[Exercise 7.30]{stanleyEnumComb2}.

We can thus derive the following Theorem and Corollary.

\begin{theorem} By Theorem~\ref{Thm_StructuredMatrixImpossibility}, no enumerable set of black-box operations of bounded degree can compute all
Schur functions accurately when the $x_i$ are complex.
\end{theorem}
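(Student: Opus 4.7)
The plan is to apply Theorem~\ref{Thm_StructuredMatrixImpossibility} to the class of generalized Vandermonde matrices and then transfer the resulting impossibility back to Schur functions themselves.

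First I would invoke the fact already cited just above the theorem that there exist partitions $\lambda^{(n)}$ in $n$ parts for which the Schur polynomial $s_{\lambda^{(n)}}(x_1,\ldots,x_n)$ is irreducible over $\mathbb{C}$ and whose degrees $|\lambda^{(n)}|$ grow without bound as $n\to\infty$ (see \cite[Exercise 7.30]{stanleyEnumComb2}). Let $M^{(n)}$ denote the corresponding $n\times n$ generalized Vandermonde matrix. Its determinant factors as
\[
\det M^{(n)}(x) \;=\; s_{\lambda^{(n)}}(x)\prod_{i<j}(x_i - x_j),
\]
so $\det M^{(n)}$ has $s_{\lambda^{(n)}}$ as an irreducible factor whose degree tends to infinity. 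Theorem~\ref{Thm_StructuredMatrixImpossibility} applies directly to the structured class $\{M^{(n)}\}$ and yields that, for any enumerable set $\mathcal{Q}$ of bounded-degree black-box operations, $\det M^{(n)}$ cannot be accurately evaluated over $\mathbb{C}^n$ for all sufficiently large $n$.

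Next I would argue by contradiction that the same $\mathcal{Q}$ cannot accurately evaluate all Schur functions. Suppose it did. The operations $-$ and $\times$ are themselves bounded-degree black boxes and may be assumed present in $\mathcal{Q}$ without loss of generality; the Vandermonde product $\prod_{i<j}(x_i - x_j)$ satisfies NIC and is therefore accurately evaluable using only these two operations. Multiplying the accurately computed value of $s_{\lambda^{(n)}}(x)$ by the accurately computed Vandermonde product would then accurately evaluate $\det M^{(n)}(x)$, contradicting the conclusion of the previous paragraph. Hence no enumerable, bounded-degree $\mathcal{Q}$ can evaluate every Schur function accurately.

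The only real obstacle is the first step: securing a family of irreducible Schur functions of unbounded degree in finitely many variables. Irreducibility in infinitely many variables is automatic by \cite{farahat58}, but the finite-variable case is subtle and is precisely what the cited exercise from \cite{stanleyEnumComb2} supplies. A minor care point is that accuracy is required only where the target polynomial is nonzero, so vanishing of the Vandermonde factor on its measure-zero locus causes no difficulty in propagating accuracy through the product $s_{\lambda^{(n)}}\cdot\prod_{i<j}(x_i-x_j)$.
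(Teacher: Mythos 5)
Your proposal is correct and follows the same route the paper has in mind; you simply make explicit a step the paper leaves implicit. Theorem~\ref{Thm_StructuredMatrixImpossibility} literally concludes the non-evaluability of the \emph{determinant} $p_n = \det M^{(n)} = s_{\lambda^{(n)}}\prod_{i<j}(x_i-x_j)$, not of $s_{\lambda^{(n)}}$ itself, so your contradiction step (if $s_{\lambda^{(n)}}$ were accurately evaluable with the given black boxes, then multiplying its output by the NIC-computable Vandermonde product accurately evaluates $\det M^{(n)}$, contradicting the theorem) is exactly the bridge needed, and it is sound: given any target relative error $\eta$, take each factor to accuracy $\eta/4$ and the final rounded multiplication contributes at most $\varepsilon$ more. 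One small correction to your closing remark: accuracy \emph{is} required where the target polynomial vanishes, since the condition $|p_{comp}-p|\le\eta|p|$ forces $p_{comp}=0$ exactly on the variety. Your argument survives this anyway, because an accurate routine for $s_\lambda$ must return an exact $0$ on $V(s_\lambda)$, the NIC evaluation of $\prod_{i<j}(x_i-x_j)$ returns an exact $0$ whenever some $x_i=x_j$, and a rounded product of a quantity with an exact $0$ is again exactly $0$, which is the correct value of $\det M^{(n)}$ on its variety.
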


\begin{corollary} No enumerable set of black-box operations of 
bounded degree or of bounded number of arguments exists 
that will accurately evaluate all minors of complex generalized
Vandermonde matrices in the generic case. \end{corollary}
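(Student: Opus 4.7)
The plan is to reduce the accurate evaluation of arbitrary minors of generalized Vandermonde matrices to the accurate evaluation of Schur functions, and then invoke the preceding theorem (together with its natural analogue for the bounded-number-of-arguments case). The key structural observation is that any square submatrix of a generalized Vandermonde matrix, obtained by choosing a subset $S$ of rows and any $|S|$ columns, is itself a generalized Vandermonde matrix in the variables $\{x_i : i \in S\}$ for some partition $\mu$. Hence every such minor has the factored form $s_\mu(x_S)\cdot \prod_{i<j\in S}(x_i-x_j)$. By choosing $n$ and the index sets appropriately, every Schur function $s_\mu$ arises in this way, and by \cite[Exercise 7.30]{stanleyEnumComb2} there exist irreducible Schur functions of arbitrarily large degree (and, separately, of arbitrarily many variables).

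Next I would argue the reduction itself. The Vandermonde factor $\prod_{i<j\in S}(x_i-x_j)$ satisfies NIC and so is accurately evaluable using only subtraction and multiplication (already in our black-box list). If an algorithm $\mathcal A$ accurately evaluated the minor $s_\mu(x_S)\prod_{i<j\in S}(x_i-x_j)$, then on the Zariski-open set where $\prod_{i<j\in S}(x_i-x_j)\neq 0$ one could accurately recover $s_\mu(x_S)$ as the ratio of two accurately computed quantities (division of two numbers known to small relative error has small relative error). Thus an accurate evaluator for all generic minors yields an accurate evaluator for all Schur functions on a dense open set, which by continuity must extend to the relevant components of the variety; in particular it must accurately evaluate the irreducible Schur functions of unbounded degree. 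By Theorem~\ref{Thm_StructuredMatrixImpossibility} applied to $\hat p_n = s_{\mu_n}$ with $\deg s_{\mu_n}\to\infty$, no enumerable set of bounded-degree black-box operations can accomplish this, yielding the desired contradiction in the bounded-degree case.

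For the bounded-number-of-arguments case I would run a parallel argument. If each $q_j$ has at most $k$ arguments, then every basic $q$-allowable set $V_{\ii}(q_j)$ is cut out by a polynomial in at most $k$ of the underlying variables (the operations in Definition~\ref{all_subv} can only identify, zero, or negate variables, not introduce genuine dependence on new ones). Consequently every irreducible $q$-allowable hypersurface in $\C^N$ depends on at most $k$ of the coordinates. But an irreducible Schur function $s_{\mu_n}$ in $n\gg k$ variables defines a hypersurface depending essentially on all $n$ variables (its gradient has more than $k$ nonzero components at generic points), so $V(s_{\mu_n})$ cannot coincide with any irreducible $q$-allowable set, violating the necessary condition of Theorem~\ref{gen_result}. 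The main obstacle is the last point — making rigorous the statement that an irreducible Schur function in $n$ variables truly ``depends on'' all $n$ variables in the sense required (so that it cannot be written, even after a standard change of variables, as something supported on a bounded-argument black box); one would establish this by exhibiting, for each coordinate $x_i$, a regular point of $V(s_{\mu_n})$ at which $\partial s_{\mu_n}/\partial x_i \neq 0$, using the known combinatorial expansion of Schur functions to verify nonvanishing of the relevant partial derivatives.
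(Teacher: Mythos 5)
Your overall strategy matches what the paper implies: reduce to Schur functions via the factored determinant of a generalized Vandermonde submatrix, observe that irreducible Schur functions of unbounded degree occur, and invoke Theorem~\ref{Thm_StructuredMatrixImpossibility} (the paper states this Corollary without further proof, relying on exactly this chain of observations and the preceding Theorem). However, your proof contains an unnecessary and somewhat shaky intermediate step that you should drop.

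You do not need to "recover $s_\mu$ by division and extend by continuity." Theorem~\ref{Thm_StructuredMatrixImpossibility} is already stated in terms of the determinant $p_n$ having an \emph{irreducible factor} of unbounded degree; it applies directly to the minor $s_\mu(x_S)\prod_{i<j\in S}(x_i-x_j)$ since $s_\mu$ is such a factor. There is no need to peel off the Vandermonde factor, and the continuity step you invoke (accurate evaluability on a dense open set extending to the variety) is not a legitimate inference in this model — accuracy near the variety is precisely where the difficulty lies, and the quantifier structure of the accuracy definition does not permit a limit argument of that sort. Fortunately the whole detour is superfluous: just apply the theorem to $p_n$ itself. For the bounded-number-of-arguments case, your argument is essentially right (basic $q$-allowable hypersurfaces depend on at most $k$ coordinates, while $V(s_{\mu_n})$ for $n\gg k$ does not), and the "obstacle" you flag at the end — showing $s_{\mu_n}$ genuinely depends on all $n$ variables — is actually easy: Schur functions are symmetric, so if $s_\mu$ were independent of some variable it would be independent of all of them, contradicting its being a nonconstant polynomial. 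With these two adjustments (delete the division/continuity detour, and close the symmetry argument) your proof is correct and aligns with what the paper intends.
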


If we restrict the domain ${\cal D}$ to be nonnegative real numbers,
then the situation changes: The non-negativity of the coefficients of
the Schur functions shows that they are positive in $\cal D$, and
indeed the generalized Vandermonde matrix is totally positive \cite{karlin}.

Combined with the homogeneity of the Schur function, 
Theorem~\ref{thm_positive_homo} implies that the Schur function,
and so determinants (and minors) of totally positive generalized 
Vandermonde matrices can be evaluated accurately in classical arithmetic (and the algorithms mentioned in Section \ref{sec_Plamen} are more efficient than the algorithm used in proving Theorem \ref{thm_positive_homo}.

Now consider a polynomial Vandermonde matrix $V_P$ defined by a
family $\{P_k(x)\}_{k \in \mathbb{N}}$ of polynomials such that 
deg$(P_k) = k$, and $V_P(i,j) = P_{j-1}(x_i)$. Note that these are included in the class of generalized Vandermonde matrices, and that the difference lies in the fact that for polynomial Vandermonde, the sequence of degrees is increasing and without gaps.

Note that any $V_P$ can be written as $V_P = V C$, with $V$ 
being a regular Vandermonde matrix, and $C$ being an upper 
triangular matrix of coefficients of the polynomials $P_k$, i.e., 
\[ 
P_{j-1}(x) = \sum_{i=1}^{j} C(i,j) x^{i-1}~, ~~\forall 1 \leq j \leq n~.
\]
Denote by $c_{i-1} := \tilde{D}(i,i)$, for all $1 \leq i \leq n$ 
the highest-order coefficients of the polynomials $P_0(x), \ldots, P_{n-1}(x)$.

The following two results are proved informally in \cite[Section 5]{focm}.

\begin{theorem} The set of principal minors of polynomial Vandermonde matrices includes polynomials which have irreducible factors of arbitrarily large degree. \end{theorem}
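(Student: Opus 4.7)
The idea is to exhibit, within a single family of polynomial Vandermonde matrices, principal minors that contain Schur functions as irreducible factors of unbounded degree. First observe that the ordinary Vandermonde matrix $V(i,j)=x_i^{j-1}$ is itself a polynomial Vandermonde matrix (take $P_k(x)=x^k$, so $C=I$), so it suffices to analyze its principal minors.

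Fix an index set $I=\{i_1<i_2<\cdots<i_k\}\subseteq\{1,\ldots,n\}$. The corresponding principal minor is the generalized Vandermonde determinant $\det\bigl[x_{i_a}^{i_b-1}\bigr]_{1\le a,b\le k}$. By the Jacobi--Trudi bialternant identity this factors (up to sign) as
\[
\det\bigl[x_{i_a}^{i_b-1}\bigr] \;=\; s_{\lambda^{(I)}}(x_{i_1},\ldots,x_{i_k})\cdot\!\!\prod_{1\le a<b\le k}\!(x_{i_b}-x_{i_a}),
\]
where $\lambda^{(I)}_j := i_{k+1-j}-(k+1-j)$ is a partition with at most $k$ parts. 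A short combinatorial check (solving $i_{k+1-j}=\lambda_j+(k+1-j)$) shows that, as $I$ ranges over $k$-subsets of $\{1,\ldots,n\}$ with $n$ sufficiently large, $\lambda^{(I)}$ ranges over every partition with at most $k$ parts.

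Next invoke \cite[Exercise 7.30]{stanleyEnumComb2}: for each $k\ge 2$ there are partitions $\lambda$ with arbitrarily large $|\lambda|$ such that $s_\lambda(x_1,\ldots,x_k)$ is irreducible in $\mathbb{C}[x_1,\ldots,x_k]$. Choose such a $\lambda$, the corresponding $I$, and $n\ge\lambda_1+k$. Since $s_\lambda$ is symmetric in $x_{i_1},\ldots,x_{i_k}$ and does not vanish on any diagonal $x_{i_a}=x_{i_b}$ (e.g.\ $s_\lambda(t,\ldots,t)=t^{|\lambda|}\cdot|\mathrm{SSYT}(\lambda,k)|\neq 0$), it shares no common factor with the antisymmetric Vandermonde factor $\prod_{a<b}(x_{i_b}-x_{i_a})$. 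Hence $s_\lambda$ appears intact as an irreducible factor of $\det V(I,I)$, and its degree $|\lambda|$ can be chosen arbitrarily large.

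\textbf{Main obstacle.} The only nonroutine ingredient is the existence of irreducible Schur polynomials in a \emph{fixed} finite number of variables of arbitrarily large degree; this is a genuinely subtle combinatorial fact, unlike the (easier) infinite-variable irreducibility of Farahat, because specialization to finitely many variables can introduce spurious factorizations. The rest---the bialternant identity, the parametrization of realizable partitions by subsets $I$, and the coprimality of the Schur and Vandermonde factors---is routine verification.
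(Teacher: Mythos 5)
Your proof is correct in its essentials and follows the same route the paper sketches: identify the ordinary Vandermonde matrix as a polynomial Vandermonde matrix, observe that its principal minors are generalized Vandermonde determinants, use the bialternant identity to extract a Schur function factor, parametrize the reachable partitions by the index set $I$, and invoke the existence of irreducible Schur polynomials of unbounded degree (the same \cite[Exercise 7.30]{stanleyEnumComb2} citation the paper relies on). The index/partition bookkeeping $\lambda^{(I)}_j = i_{k+1-j}-(k+1-j)$ is verified correctly, as is the surjectivity onto partitions with at most $k$ parts once $n$ is large enough. The paper itself defers the detailed proof to \cite[Section 5]{focm}, but the surrounding discussion (Schur functions as minors of generalized Vandermonde matrices, ``irreducible of unbounded degree'') makes clear this is the intended argument.

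One inaccuracy to fix: the claim that ``for each $k\ge 2$ there are partitions $\lambda$ with arbitrarily large $|\lambda|$ such that $s_\lambda(x_1,\ldots,x_k)$ is irreducible in $\mathbb{C}[x_1,\ldots,x_k]$'' fails for $k=2$. Over an algebraically closed field, every homogeneous form in two variables of degree $\ge 2$ splits completely into linear factors, so no binary Schur polynomial of degree $\ge 2$ is $\mathbb{C}$-irreducible. This is exactly the specialization pitfall you flag as ``the main obstacle,'' and it bites already at $k=2$. Since the argument only needs, for each target degree, \emph{some} pair $(k,\lambda)$ with $s_\lambda$ irreducible over $\mathbb{C}$, the fix is to replace ``for each $k\ge 2$'' with ``for each $k\ge 3$'' (or simply ``for suitable $k$''), and correspondingly take $k\ge 3$ when choosing $I$ and $n$. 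With that adjustment the proof goes through. The observation that $s_\lambda$ is coprime to the Vandermonde product because it is nonvanishing on the diagonals is also not strictly needed once $|\lambda|\ge 2$ and $s_\lambda$ is irreducible (it cannot equal a linear factor on degree grounds), but it is a harmless sanity check.
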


\begin{corollary} By Theorem \ref{Thm_StructuredMatrixImpossibility}, the set of polynomial Vandermonde matrices contains matrices whose inverses cannot be evaluated accurately even with the addition of any enumerable set of bounded-degree black boxes. \end{corollary}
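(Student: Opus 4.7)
The plan is to use Cramer's rule to reduce accurate inversion of a polynomial Vandermonde matrix $V_P$ to accurate evaluation of certain of its minors, and then invoke the preceding theorem together with Theorem~\ref{Thm_StructuredMatrixImpossibility} to derive a contradiction.

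First I would set up the key identity: the $(i,j)$-entry of $V_P^{-1}$ equals $(-1)^{i+j} M_{ji}(x)/\det V_P(x)$, where $M_{ji}$ denotes the determinant of the $(n-1)\times(n-1)$ submatrix of $V_P$ obtained by deleting row $j$ and column $i$. Next, using the factorization $V_P = V C$ with $V$ a standard Vandermonde and $C$ upper triangular with diagonal $c_0,\ldots,c_{n-1}$, I would write $\det V_P = \left(\prod_{k=0}^{n-1} c_k\right)\prod_{i<j}(x_j - x_i)$. This expression satisfies NIC, so $\det V_P$ is accurately evaluable using only traditional arithmetic, with no extra black boxes required.

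Assume, for contradiction, that for every polynomial Vandermonde $V_P$ of every size there exists an accurate algorithm for $V_P^{-1}$ using traditional arithmetic together with some fixed enumerable set of bounded-degree black-box operations. Then each entry $[V_P^{-1}]_{ij}$ is accurately evaluable in that extended model. Multiplying this accurate entry by an accurate evaluation of $\det V_P$ introduces only one additional rounding factor of size $\le\varepsilon$, so the product $(-1)^{i+j} M_{ji}(x)$ is also accurately evaluable in the same model. By the preceding theorem, however, the family of minors of polynomial Vandermonde matrices contains polynomials whose irreducible factors have arbitrarily large degree. Selecting such a sequence of minors (indexed by the matrix dimension $n$) and applying Theorem~\ref{Thm_StructuredMatrixImpossibility} over $\mathbb{C}$ yields the required contradiction.

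The main obstacle I anticipate is matching the minors supplied by the preceding theorem to genuine cofactors of $V_P$. If the problematic minors happen not to appear directly as cofactors of the original matrix—e.g., because they come from non-contiguous row/column choices—the fix is to pad: embed the relevant submatrix inside a slightly larger polynomial Vandermonde $\widetilde V_P$ chosen so that the problematic minor appears as a cofactor of $\widetilde V_P$, and then apply the argument to $\widetilde V_P^{-1}$. Because the hypothesized inversion algorithm must work uniformly across all polynomial Vandermonde matrices, it must handle $\widetilde V_P$ as well, and the contradiction goes through. A minor bookkeeping point is to verify that the degree of the irreducible factor grows with $n$ in the padded sequence, so that Theorem~\ref{Thm_StructuredMatrixImpossibility} applies for all sufficiently large $n$.
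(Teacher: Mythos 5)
Your Cramer's-rule reduction is the right idea and is exactly the argument the paper intends (the paper itself defers the details to \cite{focm}): if each entry of $V_P^{-1}$ could be evaluated accurately in some fixed extended model, then multiplying by an accurate $\det V_P$ (the Vandermonde product $\prod_{i<j}(x_j-x_i)$ times a known constant, which satisfies NIC and can be scaled by a constant accurately in the extended model) would give accurate cofactors $M_{ji}$, and Theorem~\ref{Thm_StructuredMatrixImpossibility} then forbids this once those minors have irreducible factors of unbounded degree.

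The padding contingency, however, does not work as described, and you should not lean on it. A $(k+1)\times(k+1)$ polynomial Vandermonde has columns of consecutive degrees $0,\dots,k$, so any cofactor of it deletes exactly one column and has \emph{at most one} gap in its degree sequence. A principal minor of an $n\times n$ polynomial Vandermonde on a non-contiguous index set $S$ with $|S|=k<n-1$ has $n-k\ge 2$ gaps, and there is no way to realize such a submatrix as a cofactor of a slightly larger polynomial Vandermonde by choosing different $Q_j$'s: the degrees simply cannot match. Fortunately, the contingency is unnecessary. For the ordinary Vandermonde $V$ (a polynomial Vandermonde with $P_k(x)=x^k$), the cofactor $M_{ii}$ with $1<i<n$ \emph{is} a principal minor, and it equals $e_{n-i}(x_1,\dots,\widehat{x_i},\dots,x_n)\prod_{k<l,\,k,l\neq i}(x_l-x_k)$, whose Schur-function factor $e_{n-i}$ is irreducible of degree $n-i$, which is unbounded as $n\to\infty$ for fixed $i\geq 2$. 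So the hard principal minors from the preceding theorem can already be taken to be cofactors, and the Cramer's-rule argument closes without any embedding. The cleaner fix is to drop the padding remark and instead observe that the unbounded-degree irreducible factor may be found among the cofactors themselves.
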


We can also say something about the $LDU$ factorizations of polynomial Vandermonde matrices.
With the matrix $C$ being the upper triangular matrix of coefficients of the polynomials $P_k$, we can write $C = \tilde{D} \tilde{C}$, 
with $\tilde{D}$ being the diagonal matrix of highest-order coefficients, 
i.e., $\tilde{D}(i,i) = C(i,i)$ for all $1 \leq i \leq n$. 
We will assume that the matrices $C$ and $\tilde{D}$ 
are given to us exactly. 

If we let $V_P = L_P D_P U_P$ and $V = LDU$, it follows that 
\begin{eqnarray*}
L_P & = & L~;\\
D_P & = & D \tilde{D}~; \\
U_P & = & \tilde{D}^{-1} UC~.
\end{eqnarray*}

Since we cannot compute $L$ accurately in the general Vandermonde case, 
it follows that we cannot compute $L_P$ accurately in the polynomial 
Vandermonde case.
%We have thus far explained the entries filled with ``No''s in \ref{table1} in the columns ``$A^{-1}$'', ``$Any~minor$'', and ``$LDU$''; finally, we explain the ``*'' entries in the polynomial Vandermonde row.
%These depend on special properties of the polynomial.
Likewise, neither the SVD nor the symmetric eigenvalue decomposition (EVD)
are computable accurately, but if the polynomials are certain orthogonal
polynomials, then the accurate SVD is possible \cite{polyvandsvd},
and an accurate symmetric EVD may also be possible \cite{dopicomoleramoro03}.

%
%          \item Apply Black Box sufficiency conditions to matrices 
%                from earlier section, where eg Cauchy parameters
%                x(i) = q(xx(i,1:m)), q a fixed polynomial black box
%          \item Apply Black Box sufficiency conditions to inverses
%                of tridiagonals with q = 2-by-2 determinants,
%          \item Say something about HSS matrices?
%          \end{itemize}
%    \item Linear Algebra problems we cannot do (Toeplitz, Vandermonde,
%          other table entries equal to "No")     
%    \end{itemize}
%\end{itemize}

\subsubsection{Positive results: using extended arithmetic} \label{pos_res}

Table~\ref{table1} gathers together structured matrix classes for which it has been established whether and which accurate linear algebra algorithms exist. For some matrix classes, it was deduced that accurate class-algorithms do not exist, from the fact necessary condition (having an accurately evaluable determinant) was violated. 

In this section, we explain how we can use the sufficiency condition for complex matrices developed in Section \ref{suf_c_bb}. 

%, to show that such algorithms exist for polynomial Cauchy matrices. Roughly stated, given an extended arithmetic with a small number of polynomial black boxes, any minor of a complex polynomial Cauchy matrix (being itself the determinant of a complex polynomial Cauchy matrix) is evaluable, and by \marginpar{Jim, Plamen?}Theorem ??? in \ref{???}, this is sufficient to get accurate algorithms for all the linear algebra problems in Table \ref{??}.

Consider complex polynomial Cauchy matrices, defined (in their simplest form) as follows. Let $p$ and $q$ be complex polynomials of one variable.   
%as follows. Let $A$ and $B$ be $m \times n$, respectively $m \times k$ matrices of variables. Let $p_i$ and $q_j$ be two sets of $m$ complex polynomials of $m$, respectively $n$ variables. 
Let now, using MATLAB notation, 
\begin{eqnarray*}
x_i & \eqbd & p(\widehat{x_i})~, ~~\forall 1 \leq i \leq m \\
y_j & \eqbd & q(\widehat{y_j})~, ~~\forall 1 \leq j \leq m~.
\end{eqnarray*}

\begin{definition} \label{cauchy_m}
We call the matrix $C = (C_{ij})$ with $C_{ij} = \frac{1}{x_i + y_j}$ where $x_i$ and $y_j$ are as above a polynomial Cauchy matrix.
\end{definition}

\begin{definition}
Let 
\begin{eqnarray*} 
Q^{-} (\widehat{x_i}, \widehat{y_j}) & = & p(\widehat{x_i}) -  q(\widehat{y_j})~,\\
Q^{+} (\widehat{x_i}, \widehat{y_j}) & = & p(\widehat{x_i}) +  q(\widehat{y_j})~,
\end{eqnarray*}
be complex polynomials over $\C^{2}$.
\end{definition}

Recall that the determinant of the Cauchy matrix $C$ is
\begin{eqnarray} \label{det_C}
\det C = \frac{\prod_{i,j} (x_i - x_j) (y_i - y_j)}{\prod_{i,j} (x_i+y_j)}~.
\end{eqnarray}

Although our models of arithmetic do not incorporate division, computers do perform division by a non-zero number as an accurate operation. Therefore, given accurate division and black-box algorithms for computing the polynomials $Q^{-}$ and $Q^{+}$, one immediately has a simple and accurate algorithm to evaluate \emph{any} minor for the matrix $C$, therefore any linear algebra operations can be easily performed on $C$ (this algorithm is guaranteed by Theorem \ref{q-suff-c1}). 
 
In fact, we can obtain a much more general result. 

\begin{theorem}
Let $\Phi$ be a formula satisfying NIC and depending on variables $x_1, \ldots, x_n$. Let $p$ be a polynomial (resp., let $\{p_i\}_{1}^{n}$ be a set of polynomials), and let $x_i = p(A(i, 1:m))$ (resp., $p_i(A(i, 1:m))$) for some matrix of parameters $A$.

We can accurately evaluate $\Phi$ on the new set of inputs depending on the parameters of $A$, provided that we build three (resp., $m^2+2m$) black boxes, computing 
\[
\left \{ \begin{array}{l} p \\
Q^{+}(y_1, \ldots, y_n, z_1, \ldots, z_n) = p(y_1, \ldots, y_n) + p(z_1, \ldots, p_n) \\
Q^{-}(y_1, \ldots, y_n, z_1, \ldots, z_n) = p(y_1, \ldots, y_n) - p(z_1, \ldots, p_n) \end{array} \right . ~,
\]
respectively, for all $1 \leq i \leq j \leq m$,
\[
\left \{ \begin{array}{l} p_i \\
Q^{+}_{ij}(y_1, \ldots, y_n, z_1, \ldots, z_n) = p_i(y_1, \ldots, y_n) + p_j(z_1, \ldots, p_n) \\
Q^{-}_{ij}(y_1, \ldots, y_n, z_1, \ldots, z_n) = p_i(y_1, \ldots, y_n) - p_j(z_1, \ldots, p_n) \end{array} \right . ~.
\]
\end{theorem}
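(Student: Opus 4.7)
The plan is to show that any NIC formula $\Phi$ evaluated on the derived inputs $x_i = p(A(i,1:m))$ can be simulated, operation by operation, using only the supplied black boxes $\{p, Q^+, Q^-\}$ (resp.\ $\{p_i, Q^\pm_{ij}\}$) together with the ordinary arithmetic allowed inside NIC, while preserving small relative error throughout.

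First, I would recall from the definition of NIC that $\Phi$'s computation DAG uses only (1) multiplications, (2) divisions, (3) like-signed additions or unlike-signed subtractions, and (4) additions or subtractions \emph{of input data}. The key observation is that rule (4) is the only route by which a cancellation of nearly-equal operands ever appears in $\Phi$; rules (1)--(3) are individually well-conditioned with respect to small relative perturbations in their operands. With this in hand, I would partition $\Phi$'s operations into ``leaf'' accesses---every single occurrence of an input $x_i$, and every occurrence of a type-(4) subexpression $x_i\pm x_j$---and ``interior'' operations of types (1)--(3) acting on values already produced elsewhere in the DAG.

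Second, I would replace each leaf access to $x_i$ by one call to the black box for $p$ applied to $A(i,1:m)$, and each leaf subexpression $x_i\pm x_j$ by one call to $Q^\pm$ applied to the concatenated pair $(A(i,1:m),A(j,1:m))$; in the multi-polynomial variant, $x_i$ is replaced by a call to $p_i$, and $x_i\pm x_j$ by a call to $Q^\pm_{ij}$. By hypothesis each such black-box call introduces a single factor $(1+\delta)$ with $|\delta|\leq\varepsilon$, so every leaf value is returned to relative error at most $\varepsilon$ \emph{regardless of the magnitude of any cancellation that occurs inside the sum or difference}. This is the crux of the argument: it shifts the cancellation from being performed visibly inside $\Phi$ (where it would catastrophically amplify errors already present in the separately computed $x_i$) to being absorbed into a single rounding error of the appropriate black box. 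I would then propagate the errors upward by induction on depth. All remaining operations are of types (1)--(3); standard forward-error analysis shows that multiplying, dividing, or safely adding/subtracting operands each carrying relative error $\eta'$ produces a result with relative error $\eta' + \varepsilon + O(\eta'\varepsilon)$. Iterating through the $d$ levels of $\Phi$'s DAG yields an overall relative error bounded linearly in $d\varepsilon$, and choosing $\varepsilon$ sufficiently small for the prescribed accuracy $\eta$ completes the proof.

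The main (and essentially only) obstacle is to verify that NIC rule (4) is genuinely invoked only on literal input variables $x_i,x_j$, and not on already-computed quantities, so that every cancellation-type subexpression in $\Phi$ really has the simple form $x_i\pm x_j$ addressable by a single $Q^\pm$ (or $Q^\pm_{ij}$) call. This is, however, built into the very statement of NIC, so no new technical work is needed beyond a careful bookkeeping of how the substitution respects $\Phi$'s DAG structure. The count of black boxes in the multi-polynomial case ($n$ polynomials $p_i$ together with $Q^\pm_{ij}$ for all $1\leq i\leq j\leq n$) matches the quoted figure once one allows the diagonal $i=j$ cases.
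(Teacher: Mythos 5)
Your proof is correct, and it captures precisely the mechanism the theorem is relying on: the paper states this result without an explicit proof, instead gesturing at it through the Cauchy-determinant example, so your argument fills in the missing reasoning cleanly. The central observation — that NIC confines all potentially cancelling additions/subtractions to rule (4) operations on \emph{literal input data}, so that after the substitution $x_i = p(A(i,\cdot))$ these are exactly the subexpressions of the form $x_i \pm x_j$ and each can be computed by a single call to $Q^\pm$ (absorbing the cancellation into one rounding error rather than amplifying errors already present in separately computed $x_i$) — is exactly the point. The remaining rule (1)--(3) operations are well-conditioned with respect to small relative perturbations, so errors propagate controllably.

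Two small imprecisions, neither of which undermines the argument. First, the claim that the overall relative error is ``bounded linearly in $d\varepsilon$'' is not quite right: multiplications combine the relative errors of both operands, so along a depth-$d$ multiplication chain the first-order error can grow like $2^d\varepsilon$. But this doesn't matter, because $\Phi$ is a \emph{fixed} formula, so the amplification constant $C(\Phi)$ is a fixed finite quantity and one still obtains accuracy by taking $\varepsilon$ small relative to $\eta/C(\Phi)$. You might state the bound more carefully as $C(\Phi)\cdot\varepsilon$ for a constant depending only on the DAG of $\Phi$. Second, the black-box count in the multi-polynomial case should, by your own accounting, be $n^2+2n$ (with indices $1\le i\le j\le n$); the theorem as printed says $m^2+2m$ with index bound $m$, which is an internal typo in the statement (the polynomials $p_i$ run over $1\le i\le n$). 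You implicitly correct this but describe your count as ``matching the quoted figure,'' which is slightly misleading — better to flag the discrepancy explicitly.
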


\vspace{.2cm}

Another class of matrices which admit accurate linear algebra
algorithms in extended arithmetic are the Green's matrices, which arise
from discrete representations of Sturm-Liouville equations. These matrices
are inverses of irreducible tridiagonal matrices.

Generic Green's matrices have a simple four-vector representation (see,
for example, \cite{ikebe79a}, \cite{nabben99a}), as
\[
F_{i,j} = \left \{ \begin{array}{ll} a_{i} b_j, & ~~~\mbox{if}~ i \geq j\\
                                     c_i d_j, &~~~\mbox{if}~ i <j~
\end{array} \right .
\]
for $\vec{a} = (a_1, \ldots a_n), ~\vec{b} = (b_1, \ldots, b_n),
~\vec{c}=(c_1, \ldots, c_n), ~\vec{d}=(d_1, \ldots, d_n)$, and $1\leq i, j
\leq n$.

The case when $\vec{a}=\vec{c}$ and $\vec{b} = \vec{d}$, i.e., the
symmetric case, has been particularly well-studied (see
\cite{gantmacher_krein}, \cite{karlin}), and we describe it it a bit
more detail.

We use the notation $X \left ( \begin{array}{cccc} i_1 & i_2 & \ldots &
i_p \\ j_1 & j_2 & \ldots & j_p \end{array} \right) $ for the minor of
matrix $X$ corresponding to rows $i_1, \ldots, i_p$ and columns $j_1,
\ldots, j_p$, and $\left | \begin{array}{cc} x & y \\ z & t \end{array}
\right |$ for the determinant $(xt-yz)$.

All minors of symmetric Green's matrices have a simple representation
(following \cite{karlin}) as
\[
G \left ( \begin{array}{cccc} i_1 & i_2 & \ldots & i_p \\ j_1 & j_2 &
\ldots & j_p \end{array} \right ) = a_{k_1} \left | \begin{array}{cc}
a_{k_2} & a_{l_1} \\ b_{k_2} & b_{l_1} \end{array} \right | \left |
\begin{array}{cc} a_{k_3} & a_{l_2} \\ b_{k_3} & b_{l_2} \end{array}
\right | \cdots \left | \begin{array}{cc} a_{k_p} & a_{l_{p-1}} \\ b_{k_p}
& b_{l_{p-1}} \end{array} \right | b_{l_p}~,
\]
where $k_m = \min (i_m, j_m)$ and $l_m = \max (i_m, j_m)$.

Similarly, all minors of generic Green's matrices can be shown (through a
simple inductive argument) to be either $0$ or products of linear and
quadratic factors. Here, by ``linear factor'' we mean a factor of the type
$a_i, ~b_j, c_k$, or $d_l$, and by ``quadratic factor'' we mean a factor
of the type $xt - yz$, with $x, ~y, ~t,$ z being entries of $\vec{a},
~\vec{b}, ~\vec{c}, ~\vec{d}$.

We can then conclude that, given a black box computing $p(x,y,z,t) \eqbd
xt-yz$ accurately, by Theorem \ref{q-suff-c1} one can compute all minors
of generic Green's matrices. Therefore, as was observed in
\cite{demmelkoev99}, one can evaluate all the minors of generic Green's
matrices, and consequently perform linear algebra accurately.

Green's matrices belong to the class of \emph{Hierarchically
semi-separable} or \emph{HSS} matrices. There are many definitions of the
latter, one of them being that HSS matrices of order $k \in \mathbb{N}$ 
are matrices for which any off-diagonal submatrix has rank no bigger than
$k$. Other examples are tridiagonal matrices, banded matrices, inverses of
banded matrices, etc. The HSS matrices are extremely useful as
preconditioners, and arise in many applications. Since
determinants of tridiagonal matrices with independent indeterminates as
entries are irreducible, and tridiagonals are special cases of HSS matrices,
some (and perhaps all) HSS matrices do have irreducible determinants.

Still, we believe that further investigation of the large class of HSS matrices
may yield other examples of subclasses for which simple black-box
operations could be constructed in order to accurately compute 
minors, and therefore, be able to perform linear algebra accurately. 

\section{Other Models of Arithmetic}
\label{sec_OtherModels}

Though the arithmetic models in this paper 
use real (or complex) numbers and rounding errors,
our goal is to draw conclusions about practical finite 
precision computation, i.e., with numbers represented
as finite bit strings (e.g., floating point numbers).
In such a bit model, all rational functions of
the arguments can be computed accurately, even exactly,
because the arguments are rational; the only question 
is cost.  In this section we draw conclusions 
about cost from our analysis.

We would like to quantify our intuition that, for example,
it is much cheaper to accurately compute the determinant
of an $n$-by-$n$ Vandermonde matrix with the familiar formula 
than with Gaussian elimination with sufficiently high precision
arithmetic. We do not mean the difference between
$O(n^2)$ and $O(n^3)$ arithmetic operations, but the
difference in cost between low precision and high precision
arithmetic. To quantify this cost, we need to pick a 
number representation.

We will assume that ``failure'' is not allowed, i.e.,
neither overflow nor underflow is permitted, so that
intermediate (and final) results can grow or
shrink in magnitude as needed to complete the 
computation.

We claim that the natural representation to use is the pair
of integers $(e,m)$ to represent $m \cdot 2^e$,
i.e., binary floating point.
Pros and cons of various number models are discussed in \cite{focm},
but we restrict ourselves here to explaining why we choose
floating as opposed to fixed point, which is also widely used
for analysis (in fixed point, $m \cdot 2^e$ would be represented
using up to $e$ explicit zeros before or after the bits representing
$m$).

One can of course represent the same set of (binary) rational
numbers in both fixed and floating point, but floating
point is much more compressed: It takes about
$\log_2 |e| + \log_2 |m|$ bits to represent $(e,m)$,
but about $|e| + \log_2 |m|$ bits to represent
$m \cdot 2^e$ in fixed point, which is exponentially 
larger. 

First, as a result of this possibly exponentially greater
use of space by fixed point, it is possible for a sequence
of $n$ fixed-point arithmetic operations to take time exponential 
in $n$ (repeated squaring doubles the length of
result at each step, even if only a fixed number of 
most significant bits are kept). 
In contrast, $n$ floating point arithmetic operations 
with fixed relative error take time that grows at worst 
like $O(n^2)$ (attained by repeated squaring again, 
which adds one bit to $e$ at each squaring).
In particular, any of the expressions in earlier sections
of this paper can be evaluated in polynomial time in
the size of the expression, and the size of their floating point 
arguments.

Second, this exponentially greater use of space in fixed-point
means that algorithms can appear ``artificially'' cheaper,
because they are only polynomial in the input size $|e| + \log_2 |m|$,
whereas they would not be polynomial as a function of the 
input size measured as $\log_2 |e| + \log_2 |m|$. 
(This is analogous to asking whether an algorithm with integer
inputs runs in polynomial time or not, depending on whether 
the inputs are represented in unary or binary.)
For example, it is possible to accurately compute the determinant of a 
general matrix with fixed point entries in polynomial time 
in the size of the input \cite{clarkson}, but we know of no 
such polynomial time algorithm with floating point entries.
Running a conventional determinant algorithm (e.g., Gaussian 
elimination with pivoting) in high enough precision would 
require roughly $\log_2  \kappa (A) = \log_2 ( \|A\| \cdot \|A^{-1}\| )$
bits of precision, which can grow like $|e|$ rather than $\log_2 |e|$
(e.g., consider
\[
A = \bmat{cc} y-x & y \\ y & y+x \emat
\]
for $y \gg x$, where $\det (A) = -x^2$).

Indeed, the obvious ``witness'' to identify a singular matrix,
a null-vector, can have exponentially more nonzero bits than
the matrix, as the following example shows. Consider the 
$(2n{+}1)$-by-$(2n{+}1)$
tridiagonal matrix $T$ with 1s on the subdiagonal, $-1$s on the
superdiagonal, and
${\rm diag}(T) = [x_1,x_2,...,x_{n-1},x_n,0,-x_n,-x_{n-1},...,-x_2,-x_1]$.
It is easy to confirm that $T$ is singular, with right null
vector $v = [1,p_1,p_2,...,p_{2n}]$ where
$p_i = {\rm det}(T(1:i,1:i))$ is a leading principal minor.
If we let $x_i = 2^{e_i}$ with $e_1=0$, $e_2=1$, and
$e_i \geq e_{i-1} + e_{i-2}$, then one can confirm 
for $i\leq n$ that $p_i$ is an integer with $f_i$ nonzero bits,
where $f_1=1$, $f_2=2$, and $f_i = f_{i-1} + f_{i-2}$ 
is the Fibonacci sequence. Since $f_i$ grows exponentially,
the null vector $v$ has exponentially many bits as a function
of $n$, whereas the size of $T$ is at most
$O(n \log e_n)$, which can be as small as $O(n^2)$. 

Another way to see the difference between fixed and floating
point is to consider the simple expression $\prod_{i=1}^n (1 + x_i)$.
If the $x_i$ are supplied in fixed point, the entire
expression can be computed exactly in polynomial time.
However in floating point, though the leading bits and trailing
bits are easy, computing some of the bits is
as hard as computing the permanent, a problem
widely believed to have exponential complexity in $n$
\cite{Valiant79}.

Here is the reduction to the permanent.\footnote{We acknowledge 
Benjamin Diament for having discovered the result relating 
floating point complexity to the permanent.} 
Let $A$ be an $n$-by-$n$ matrix whose entries
are $0$s and $1$s. The permanent is the same as the determinant,
except that all terms in the Laplace expansion are added,
instead of some being added and some subtracted. 
Let $r_i$ and $c_j$ be independent indeterminates, and
consider the multivariate polynomial
\begin{equation}
\label{eqn_Permanent}
p(r_1,...,r_n,c_1,...,c_n) = \prod_{A_{ij} \neq 0} (1 + r_i c_j) .
\end{equation}
Then the coefficient $k$ of $\prod_{i=1}^n r_i c_i$ in the
expansion of $p$ can be seen to be the permanent. 
Next we replace $r_i$ and $c_j$ by widely enough 
spaced powers of $2$, so that every coefficient of every term in the
expansion of $p$ appears in non-overlapping bits of $p$ evaluated
at these powers of $2$.
Since no coefficient can exceed $2^{n^2}$, and
since the sequence of exponents $(f_n,...,f_1,e_n,...,e_1)$
in any term $\prod_{i=1}^n r_i^{e_i} c_i^{f_i}$ of $p$
can be thought of as the unique expansion of a number in
base $n+1$,
one can see that choosing $r_i = 2^{n^2(n+1)^{i-1}}$
and $c_j = 2^{n^2(n+1)^{n+j-1}}$ suffices.
The biggest possible product $r_ic_j$ is
$r_nc_n = 2^{n^2((n+1)^{n-1} + (n+1)^{2n-1})} \leq 2^{2n^2(n+1)^{2n}}$,
where the exponent takes at most
$\log_2 (2n^2(n+1)^{2n}) = O(n \log n)$ bits to represent, so
all the arguments $r_ic_j$ in
the product in (\ref{eqn_Permanent}) take $O(n^3 \log n)$ bits
to represent.

Now we consider ``black box arithmetic'',
whose purpose is to model the use of subroutine
libraries with selected high accuracy operations.
We claim that any multivariate polynomial (``black box'') 
with $t$ terms of maximum degree $d$, can be evaluated
accurately in polynomial time as a function of $d$, $t$ 
and the size of the input floating point numbers. The algorithm is 
simply to evaluate each term exactly, and then sum them in decreasing
order of exponents, using a register of about $\log_2 t$ bits
more than needed to store the longer term exactly \cite{demmelkoev04,demmelhida}.
In particular, any enumerable collection of black-boxes
that are all bounded in degree $d$ and number of terms $t$
can all be thought of as running in time polynomial in
the size of their floating point arguments, just like the
basic operations of addition, subtraction and multiplication.
If the number of terms $t$ is proportional to the number
of inputs (e.g., dot products of vectors of length $t$), 
then the cost is still polynomial in the input size.

In summary, in a natural floating point model of arithmetic,
the algorithms we have discussed run in polynomial time in
the size of the inputs, whereas simply running a conventional
algorithm in sufficiently high precision arithmetic to get the
answer accurately can take exponentially longer. We know of no
guaranteed polynomial-time alternatives to our algorithms.

\section{Structured Condition Numbers}
\label{sec_Conditioning}

In this section we begin by recalling some attractive
properties of structured condition numbers for 
problems that we can solve accurately,
and discuss possible generalizations.
If our problem is evaluating the function $p(x_1,...,x_n)$,
then the structured condition number $\kappa_{struct}$ is simply
the derivative of the relative change in $p$ with respect
to relative changes in its arguments:
\begin{equation}
\label{eqn_StructCond}
\kappa_{struct} = 
\frac{\| (x_1 \frac{\partial p}{\partial x_1} , ... , 
          x_n \frac{\partial p}{\partial x_n}) \|}
{|p|}
\end{equation}
where any vector norm may be used in the numerator.

The simplest case, as before, is 
for problems described by
Theorem~5.12 and Corollary~5.15, which say that
in the complex case, a necessary and sufficient
condition for accurate evaluation of complex $p(x)$
using only traditional arithmetic ($\pm$ and $\times$) 
is that $V(p)$ be allowable, in which case $p(x)$ factors
completely into factors of the forms $x_i^\alpha$,
and $(x_i \pm x_j)^{\beta}$, where
$\alpha$ and $\beta$ are fixed integers.
This covers many of the linear algebra examples
in Section~\ref{sec_Plamen}. Given such a simple expression it
is easy to evaluate the structured condition number:
Each factor $x_i^{\alpha}$ adds $\alpha$ to 
$\frac{x_i \frac{\partial p}{\partial x_i}}{p}$, and each factor
$(x_i \pm x_j )^{\beta}$ adds 
$| \beta x_i / (x_i \pm x_j) | \leq | \beta | / \relgap(x_i,\mp x_j)$.

Slightly more generally, for expressions satisfying NIC,
e.g., including real expressions that only add like-signed values,
analogous conclusions can be drawn. This is because factors that
only add like-signed values can only make bounded contributions
to the condition number.

Given a structured condition number for a decomposition
like LDU with complete pivoting (an RRD), this essentially
becomes a structured condition number for the SVD
\cite[Thm 2.1]{DGESVD}.

Now we consider the set of {\em ill-posed problems}, i.e., the
ones whose structured condition numbers are infinite. 
Examining (\ref{eqn_StructCond}), we see that $p=0$ is
a necessary condition, i.e., the ill-posed problems
are a subset of $V(p)$. (If $p(x)$ were rational, we
would include the poles as well.)
For every term $| \beta | / \relgap(x_i,\mp x_j)$ 
in the structured condition number, the corresponding ill-posed set is
defined by $x_i = \mp x_j$.
All of $V(p)$ is not necessarily ill-posed, since for
example small relative changes in $x$ only cause small
relative changes in $p(x) = x^{\alpha}$.

It is natural to ask if there is a relationship between the 
{\em distance to the nearest ill-posed problem}, i.e., 
the smallest relative change to the $x_i$ that make the 
problem ill-posed, and its structured condition number
\cite{demmel87}.
It is easy to see that for any term
$| \beta | / \relgap(x_i,\mp x_j)$ in the structured condition number,
the smallest relative changes to $x_i$ and $\mp x_j$ that make it
infinite are close to $\relgap (x_i, \mp x_j)$ when it is
small. In other words, the structured condition number is close
to the reciprocal of the distance to the nearest ill-posed problem,
measured by the smallest relative change to the arguments $x_i$.
This helps explain geometrically why the structured condition number
can be so much smaller than the unstructured one: it takes, for example,
a much larger perturbation to 
make $x_i = i-\frac{1}{2}$ and $x_j = j-\frac{1}{2}$ equal
than the smallest singular value of the Hilbert matrix $H_{ij} = 1/(x_i + x_j)$.

This reciprocal-condition-number property, that the reciprocal of the
condition number is approximately the distance to the nearest ill-posed
problem, is common in numerical analysis \cite{demmel87,rump99a,rump03a}.
% Siegfried Rump, "Ill-conditioned matrices are componentwise near to 
%   singularity," SIAM Review, 41 (1), pp 102-112, 1999
%       \article{rump99a,
%            author = {Rump, S.},
%            title = {Ill-conditioned matrices are componentwise near to
%                     singularity},
%            journal = {{SIAM} Review},
%            volume = {41},
%            number = {1},
%            pages = {102-112},
%            year = {1999} }
% Siegfried Rump, "Structured Perturbations Part I: Normwise Distances"
%   SIMAX, v 25, n 1, pp 1-30, 2003
%       \article{rump03a,
%            author = {Rump, S.},
%            title = {Structured {P}erturbations {P}art {I}: 
%                     {N}ormwise {D}istances},
%            journal = {{SIAM} J. Mat. Anal. Appl.},
%            volume = {25},
%            number = {1},
%            pages = {1--30},
%            year = {2003} }
The following simple asymptotic argument shows why:

If the structured condition number (\ref{eqn_StructCond}) is very large,
then some component 
$|x_i \frac{\partial p}{\partial x_i} / p | \gg 1$,
i.e., 
$|p / \frac{\partial p}{\partial x_i} | \ll |x_i|$,
or in other words one step of Newton's method
$x_i^{new} = x_i - p / \frac{\partial p}{\partial x_i}$
to find a root of $p=0$ will take a very small step. Therefore 
it is plausible that this step 
$p / \frac{\partial p}{\partial x_i}$
is very close to the smallest 
(absolute) distance to the variety in the $x_i$ direction
(or an integer multiple of 
$p / \frac{\partial p}{\partial x_i}$
is, the multiplicity of the root)
and dividing by $|x_i|$ yields the relative distance.

Now let us go beyond expressions evaluable accurately just using NIC.
Consider the case of a real positive polynomial or empty variety, 
as discussed in Section~\ref{sec_PositivePolys}. The analysis in 
Theorem~\ref{thm_positive_compact} (resp., Theorem~\ref{thm_positive_homo}) 
shows that the relative condition number 
will grow like $1/p_{min}$ (resp., $1/p_{min,homo}$), the reciprocal
of the smallest value $p(x)$ can take on the appropriate domain. 
So the relative condition number can be arbitrarily large, 
but 
in the absence of a variety intersecting the domain
it remains bounded.

Based on these examples and analysis, we conjecture that for 
traditional arithmetic, the following two statements hold.
(1) The reciprocal of the structured condition number is an approximation
of the relative distance from $x$ to the nearest ill-posed problem, 
perhaps asymptotically.
(2) This relative distance is approximately given by $\relgap (x_i, \mp x_j)$ 
for some $i$ and $j$.

This reciprocal-condition-number
property is quite robust as the arguments above suggest,
and does not necessarily depend on accurate evaluability.
For example, if $p(x) = (x_1 + x_2 + x_3)^{\alpha}$ then
its structured condition number is $\alpha \| x \| / |x_1 + x_2 + x_3|$,
and $|x_1 + x_2 + x_3| / \|x\|_1$ is indeed the relative distance.
However, the reciprocal-condition-number property is not universal but depends
on the structure we impose \cite{rump98,rump99b,rump03b}. 
% Siegfried Rump, "Structured perturbations and symmetric matrices,''
%   Lin. Alg. Appl. 278, pp 121-132, 1998
%       \article{rump98,
%            author = {Rump, S.},
%            title = {Structured perturbations and symmetric matrices},
%            journal = {Lin. Alg. Appl.},
%            volume = {278},
%            pages = {121-132},
%            year = {1998} }
% Siegfried Rump, "Ill-conditionedness need not be componentwise near to 
%   ill-posedness for least squares problems," BIT Numerical Mathematics
%   39 (1) pp 143-151, 1999
%       \article{rump99b,
%            author = {Rump, S.},
%            title = {Ill-conditionedness need not be componentwise near to
%                     ill-posedness for least squares problems},
%            journal = {{BIT} {N}umerical {M}athematics},
%            volume = {39},
%            number = {1},
%            pages = {143-151},
%            year = {1999} }
% Siegfried Rump, "Structured Perturbations Part II: Componentwise Distances"
%   SIMAX, v 25, n 1, pp 31-56, 2003
%       \article{rump03b,
%            author = {Rump, S.},
%            title = {Structured {P}erturbations {P}art {II}: 
%                     {C}omponentwise {D}istances},
%            journal = {{SIAM} J. Mat. Anal. Appl.},
%            volume = {25},
%            number = {1},
%            pages = {31--56},
%            year = {2003} }
% {\em I'm not sure what to say at this point, even as a conjecture,
% and invite discussion. I have modified the following text based on
% Ioana's input:} 
Just as this reciprocal condition number property is equivalent to 
the statement that computing the condition number is as sensitive a 
problem as solving the original problem, we conjecture that the structured 
condition number $\kappa_{struct}$ can only be computed accurately if the 
original problem $p$ can be, at least in the interesting case
when $\kappa_{struct}$ is large.
This seems reasonable since $p(x)$ ends up in the denominator of
$\kappa_{struct}$, so we need to evaluate $p$ accurately near its zeros 
(or poles). But the numerators $\partial p / \partial x_i$ could be anything, and perhaps
even have zeros on unallowable varieties, so to be more precise
we conjecture that $p$ can be evaluated accurately in some open
neighborhood of its zeros (or poles) if and only if
$\kappa_{struct}$ can be.

% observations
%   structured condition numbers << unstructured ones
%   NIC => 1/relgap only
%     complex: covers all cases
%     real: x^2 - 1.999*x*y + y^2 shows it is more complicated,
%           though still bounded in this case
%           conjecture: still 1/relgap??
%   cond = 1/relgap = dist(sing)
%   also for SVD etc (see old paper)
%   black boxes:
%     conf = 1/bb(x) only ?
%     dist(sing) = 1/cond ?
%       evidence (old paper)

\section{Conclusions}  \label{sec_Conclusions}

In this paper, we have made the case for accurate evaluation of polynomial expressions and accurate linear algebra; we have shown that such evaluation is desirable (Section \ref{sec_intro}), significant (Section \ref{sec_OtherModels}) and often realizable efficiently (Section \ref{sec_Plamen}). We have listed, in Section \ref{sec_Plamen}, many types of structured matrices that have been analyzed from an accuracy perspective in the numerical linear algebra literature, while in Section \ref{sec_OlgaIoana} we identified the common algebraic structure that made them analyzable in the first place. 

There are limits to how much we can hope to extend the class of structured matrices for which linear algebra 
can be performed accurately; the ``negative examples'' of Section~\ref{sec_conseq} show that, for some classes of matrices, accuracy cannot be achieved in finite precision, and both Sections \ref{sec_Plamen} and \ref{sec_OlgaIoana} mention problems that are impossible to solve in ``traditional'' arithmetic. The former should be seen as ``hard'' barriers, but the latter should be seen as a challenge, both from theoretical and computational perspectives. The theory  should aim to provide answers to the question of how to extend one's arithmetic by adding ``black-box'' operations, in order to make these structured problems solvable (as we do for the examples of Section \ref{sec_design}); the computation should design software implementing such ``black boxes''. 

In summary, accurate evaluation is an important area of scientific computing, which has been advanced by the recent results presented here. Plenty of work remains  in adding to both the theoretical framework (which apparently requires familiarity with ``pure'' mathematical fields such as algebraic geometry, topology, and analysis) and to the practical one (software implementation). 
%The implications and consequences of this work are far- and wide-reaching, and we hope that the problems we describe, the conjectures we make, and the challenges we describe in this paper will be eventually answered and met.

%\newpage

\bibliographystyle{plain}
\bibliography{biblio}

\end{document}